\documentclass[11pt]{article}
\usepackage{algorithm}
\usepackage{algorithmic}
\usepackage{amsfonts}
\usepackage{amsmath}
\usepackage{amssymb}
\usepackage{amsthm} 
\usepackage{bbm}
\usepackage{cancel}
\usepackage{caption}
\usepackage{color}
\usepackage{comment}
\usepackage{graphicx}
\usepackage{epstopdf}
\usepackage{float}
\usepackage{hyperref}
\usepackage{cleveref}
\usepackage{fullpage}
\usepackage{lipsum}
\usepackage{mathabx}
\usepackage{mdframed}
\usepackage{subcaption}
\usepackage{wrapfig}
\usepackage{url}
\usepackage{xcolor}
\usepackage{xfrac}

\newtheorem{theorem}{Theorem}[section]
\newtheorem{proposition}[theorem]{Proposition}
\newtheorem{corollary}[theorem]{Corollary}
\newtheorem{lemma}[theorem]{Lemma}

\newtheorem{definition}[theorem]{Definition}

\theoremstyle{remark}
\newtheorem{remark}[theorem]{Remark}

\def\A{\mathbf A}
\def\B{\mathbf B}
\def\H{\mathbf H}
\def\D{\mathbf D}
\def\C{\mathbf C}
\def\K{\mathbf K}
\def\M{\mathbf M}
\def\Q{\mathbf Q}
\def\U{\mathbf U}
\def\V{\mathbf V}
\def\S{\mathbf S}
\def\I{\mathbf I}
\def\P{\mathbf P}
\def\X{\mathbf X}
\def\Y{\mathbf Y}
\def\Z{\mathbf Z}

\def\At{\widetilde{\A}}

\def\x{\mathbf x}
\def\v{\mathbf v}
\def\u{\mathbf u}
\def\s{\mathbf s}
\def\a{\mathbf a}
\def\e{\mathbf e}
\def\s{\mathbf s}
\def\b{\mathbf b}

\def\R{\mathbb R}
\def\PP{\mathbb P}
\def\E{\mathbb E}

\newcommand{\Pperp}[1]{{\P_{\!\perp #1}}}
\newcommand{\overbar}[1]{\mkern1.5mu\overline{\mkern-1.5mu#1\mkern-1.5mu}\mkern 1.5mu}

\def\EE{\mathcal E}
\def\Ec{\mathcal E}

\def\one{\mathbf 1}
\def\zero{\mathbf 0}

\def\Sigmab{\mathbf{\Sigma}}

\def\Sigmabt{\widetilde{\Sigmab}}

\DeclareMathOperator*{\argmin}{arg\,min}
\DeclareMathOperator*{\Rate}{Rate}
\DeclareMathOperator*{\Err}{Err}
\DeclareMathOperator*{\Var}{Var}
\DeclareMathOperator*{\tr}{tr}
\DeclareMathOperator*{\nnz}{nnz}

\begin{document}

\title{Sharp Analysis of  Sketch-and-Project Methods via a Connection to Randomized Singular Value Decomposition} 
\author{Micha{\l} Derezi\'nski\thanks{derezin@umich.edu, Department of Electrical Engineering and Computer Science, University of Michigan } \; and \; Elizaveta Rebrova\thanks{elre@princeton.edu, Department of Operations Research and Financial Engineering, Princeton University}} 

\maketitle

\begin{abstract}
 Sketch-and-project is a framework which unifies many known iterative methods for solving linear systems and their variants, as well as further extensions to non-linear optimization problems. It includes popular methods such as randomized Kaczmarz, coordinate descent, variants of the Newton method in convex optimization, and others. 
 In this paper, we develop a theoretical framework for obtaining sharp guarantees on the convergence rate of sketch-and-project methods. Our approach is the first to: (1) show that the convergence rate improves \emph{at least linearly} with the sketch size, and even faster when the data matrix exhibits certain spectral decays; and (2) allow for \emph{sparse} sketching matrices, which are more efficient than dense sketches and more robust than sub-sampling methods. In particular, our results explain an observed phenomenon that a radical sparsification of the sketching matrix does not affect the per iteration convergence rate of sketch-and-project. To obtain our results, we develop new non-asymptotic spectral bounds for the expected sketched projection matrix, which are of independent interest; and we establish a connection between the convergence rates of iterative sketch-and-project solvers and the approximation error of randomized singular value decomposition, which is a widely used one-shot sketching algorithm for low-rank approximation. Our experiments support the theory and demonstrate that even extremely sparse sketches exhibit the convergence properties predicted by our framework.
\end{abstract}


\section{Introduction}
\label{s:intro}

Randomized sketching is one of the most popular dimension-reduction techniques, applied in a variety of linear algebra, compressed sensing, and machine learning tasks  \cite{woodruff2014sketching, tropp2011structure,martinsson2020randomized,dpps-in-randnla}. An important application of sketching is to construct a random low-dimensional subspace such that much of the information contained in the data is retained after projecting onto this subspace. In particular, this arises in two algorithmic paradigms: (1) as part of an iterative solver which performs the sketching repeatedly, gradually converging to the desired solution (known as \emph{sketch-and-project}); and (2) as a basis for an approximate matrix factorization algorithm where the sketching is performed once (we refer to this as \emph{Randomized SVD}). These two paradigms have different performance metrics and traditionally have been studied independently. In this paper, we show a quantitative connection between the two and use it to obtain new convergence guarantees for iterative solvers based on sketch-and-project.

\paragraph{Sketch-and-project.} The sketch-and-project method was developed as a unified framework  for iteratively solving linear systems \cite{generalized-kaczmarz}, although it can be directly extended to non-linear optimization problems. For concreteness, let us consider our key application, projection-based linear solvers for overdetermined systems, aiming to find $\x$ that satisfies $\A\x = \b$ for $\A \in \R^{m \times n}$ and $\b \in \R^m$, where we assume that $m\geq n$ and there is a unique solution $\x_*$. Our goal is to find an $\x$ such that $\|\x-\x_*\|_{\B} := [(\x-\x_*)^T\B(\x-\x_*)]^{1/2}\leq\epsilon$, for some positive definite matrix $\B$. Instead of projecting directly onto the range of $\A$ to find $\x_* = \A^{\dagger}\b$, one can consider iterative projections on the ranges of smaller \emph{sketched} matrices $\S\A$ where $\S \in \R^{k \times m}$ with $k \ll n\leq m$. 
Then, for $t = 0, 1, \ldots$ a random sketching matrix $\S = \S(t)$ is sampled and the update rule is given~by
\begin{equation}\label{sketch-and-project}
\x_{t+1} = \argmin_{\x \in \R^n} \|\x_t - \x\|_\B^2 \quad\text{ such that }\quad \S\A\x = \S\b.
\end{equation}
This optimization problem can be solved directly and is equivalent to an iterative step
\begin{equation*}
\x_{t+1} = \x_t - \B^{-1}\A^\top\S^\top(\S\A\B^{-1}\A^\top\S^\top)^{\dagger}\S(\A\x_{t} - \b).
\end{equation*}
Here, $(\cdot)^\dagger$ denotes the Moore-Penrose pseudoinverse and the matrix $\B$ defines the projection operator. For example, $\B = \I$ defines the usual Euclidean projection. Varying the $\S$ and $\B$ matrices (i.e., the ways to sketch and to project), this general update rule reduces to randomized Gaussian pursuit, Randomized Kaczmarz, and Randomized Newton methods, among others \cite{generalized-kaczmarz, Gower2019}. 

The performance of these methods is often measured via the following worst-case expected convergence rate:
\begin{equation}\label{sp-rate}
    \Rate_\B(\A,k)\ := \
    \sup\Big\{\ \rho\ \text{ s.t. }\ \E\,\|\x_{t}-\x_*\|_\B^2\leq (1-\rho)^t\cdot\|\x_0-\x_*\|_\B^2\quad\forall\,\x_0, t \Big\}.
\end{equation}
This rate is worst-case in the sense that, while it depends on the problem and sketching matrix, it does not depend on $\x_0 \in \mathbb{R}^n$ or $t = 1, 2, \ldots$, because of taking the supremum. While implicit ways to estimate this quantity, as a function of the distribution of the sketches, were known from early papers in the area \cite{gower2017randomized}, the practical goal is to give explicit dependence  on the sketch size and spectral characteristics of the data. Such explicit characterizations have remained elusive even in such simple cases as Gaussian sketching matrices, and are virtually non-existent for more practical sparse and structured sketching methods.

\paragraph{Contribution 1: Sharp convergence rates and connections to the data spectral decay.}
We develop a new framework for obtaining sharp guarantees on the convergence rate of sketch-and-project methods.
At a high level, our main theoretical results (Theorems \ref{t:gaussian}, \ref{t:sub-gaussian}~and~\ref{c:less}) show that if the sketching matrix $\S$ exhibits a sufficiently Gaussian-like distribution, then:
\begin{equation}
    \Rate_\B(\A,k) \ \gtrsim\ \frac{k\,\sigma_{\min}^2(\At)}{\Err(\At,k-1)}
    \geq \frac{k\,\sigma_{\min}^2(\At)}{\|\At\|_F^2}
    \qquad\text{for}\qquad \At=\A\B^{-1/2},\label{eq:connection}
\end{equation}
where $\sigma_{\min}(\cdot)$ denotes the smallest singular value and $\Err(\A,k) := \E\,\|\A(\I- (\S\A)^\dagger\S\A)\|_F^2$ is the expected Frobenius norm error of projecting $\A$ onto the subspace defined by $\S\A$, which 
is a standard error metric for rank-$k$ Randomized SVD \cite{tropp2011structure}. Thus, our results explicitly connect two disjoint domains where matrix sketching is a standard algorithmic tool.

To gain intuition, consider the case when $\B=\I$ (in which case $\At=\A$) and note that $\Err(\A,k)$ is a non-increasing function in $k$, so that $\Err(\A,k)\leq \Err(\A,0)=\|\A\|_F^2$. When $k=1$, \eqref{eq:connection} recovers the standard convergence guarantee for Randomized Kaczmarz \cite{strohmer2009randomized}, i.e., $\Rate(\A,1)\gtrsim \sigma_{\min}^2(\A)/\|\A\|_F^2$, but when we start increasing the sketch size~$k$, our convergence rate bound improves at least linearly with $k$. For the so-called Block Gaussian Kaczmarz, i.e., where $\S$ is Gaussian, this is a direct improvement over prior convergence guarantees \cite{rebrova2021block}, and the first result to demonstrate linear improvement of $\Rate_\B(\A,k)$ as a function of $k$ over all sketch sizes and input matrices (first part of Corollary~\ref{cor:poly-decays}).

In addition, we show that if the Randomized SVD error decreases rapidly with $k$ (i.e., $\Err(\A,k)\ll\|\A\|_F^2$), which occurs when $\A$ exhibits fast spectral decay, then the convergence rate of sketch-and-project improves \emph{superlinearly} with $k$. To our knowledge, this is the first result of this kind. For example, in combination with existing bounds for Randomized SVD \cite{tropp2011structure}, we show that, when the input matrix $\A$ exhibits polynomial spectral decay with $\sigma_i^2(\A)\asymp i^{-\beta}$ for $\beta >1$, then Block Gaussian Kaczmarz exhibits convergence with $\Rate(\A,k)\gtrsim k^\beta\sigma_{\min}^2(\A)/\|\A\|_F^2$ (second part of Corollary~\ref{cor:poly-decays}).

Furthermore, we show that for certain matrices $\A$, the convergence of Block Gaussian Kaczmarz can become entirely independent of the condition number of $\A$ for the right choice of sketch size $k$. Namely, in Corollary \ref{c:flat-tailed} we show that if the spectrum of $\A$ exhibits a sharp decay followed by a flat tail (this can be caused either by noise in the data or by deliberate regularization of the problem), then choosing sketch size $k$ proportional to the number of singular values in the first part of the spectrum leads to the convergence with $\Rate(\A,k)\gtrsim k/n$. Note that, unlike all previously known guarantees for sketch-and-project, this rate does not depend on the condition number $\kappa(\A)=\sigma_{\max}(\A)/\sigma_{\min}(\A)$, which can be arbitrarily large for such matrices. 

All of the above phenomena describing the dependence of the convergence rate on the sketch size are verified by our empirical results. While practical choice of sketch size is a decision depending on multiple factors, including storage and computation-communication trade-offs, the new convergence guarantees significantly advance our understanding of the quantitative effect that the sketch size has on the convergence. Depending on the information available on the spectrum of $\A$, equipped with the new rates, we might have more or less incentives to increase the sketch size further, and get an estimate on the minimum viable sketch size (and memory requirements) for a particular application.

\paragraph{Contribution 2: Convergence rate analysis for sparse and structured sketches.}
Prototypical examples of sketching matrices are Gaussian matrices with independent entries and other matrices satisfying the Johnson-Lindenstrauss property (preserving the geometry of the data), or block-identity matrices that effectively subsample the data matrix. A variety of other sketching matrices have been proposed \cite{ailon2009fast,cw-sparse,nn-sparse,less-embeddings}, exploiting sparse and/or structured matrix representations to optimize the trade-off between the complexity of the model, its storage requirements, its effectiveness in compressing the data, and the cost of sketching. While the performance of these optimized sketching methods is well-understood for many tasks (including Randomized SVD), the convergence analysis of sketch-and-project is an exception. Here, prior work has been limited to Gaussian \cite{rebrova2021block} and certain block-identity sketching matrices \cite{needell2014paved} (or to results that do not provide an explicit bound on the convergence rate).

We overcome these limitations by providing a framework for analyzing sketch-and-project under any sketching distribution that satisfies certain generic concentration assumptions (Theorem~\ref{t:sub-gaussian}). This includes not only Gaussian matrices, but also matrices with i.i.d.~sub-gaussian entries (e.g., random $\pm1$ entries, which are cheaper to generate than Gaussians), as well as certain sparse sketching matrices. Then, we show that our convergence guarantees for sketch-and-project can be shown when using a family of sparse sketching matrices called Leverage Score Sparsified (LESS) embeddings \cite{less-embeddings} (Theorem~\ref{c:less}).  LESS embeddings use a carefully constructed sparsity pattern, based on the leverage scores of the input matrix $\A$ \cite{fast-leverage-scores}, to produce a sketching matrix that mimics the properties of a Gaussian sketch while enjoying the efficiency of sparse matrix multiplication. We also note that the same effect can be achieved by simpler uniformly sparsified sketching matrices if the input matrix is preconditioned by a randomized Hadamard transform (see Section \ref{s:main-sparse}). 

It was observed before \cite{rebrova2021block} that sparse sketches exhibit very similar per iteration convergence as dense Gaussian sketches, while being much faster in practice (indeed, every iteration is significantly quicker if it avoids multiplication with a dense Gaussian matrix). Thus, the analysis of non-Gaussian, and in particular, sparse and discrete sketches is necessary to give convergence guarantees for more practical solvers.  To our knowledge, we give the first non-trivial theoretical result for sketch-and-project with sparse sketching matrices. We also observe empirically that, for many real-world problems, the Gaussian-like performance of sketch-and-project holds even for extremely sparse sketching matrices and even without~preconditioning.

\paragraph{Extensions to nonlinear optimization.}

While, for the sake of simplicity, our main results are focused on solving linear systems, they can be naturally extended to a number of nonlinear optimization algorithms based on the sketch-and-project framework \cite{agmon1954relaxation, leventhal2010randomized, briskman2015block,de2017sampling}. We illustrate this in Section~\ref{s:main-newton}, showing that improved convergence guarantees can be obtained for a stochastic Newton method known as Randomized Subspace Newton \cite{Gower2019}. Here, sketching is applied to the Newton system that arises at each step of the optimization, and the convergence is characterized via the spectral properties of the Hessian of the objective.

\subsection{Our approach: New spectral analysis of expected projection matrix}

Our main technical contribution is a sharp spectral analysis of the expectation of the projection matrix $\P$ corresponding to the random subspace defined by the sketch $\S\A$, addressing a long-standing challenge in the analysis of sketch-and-project. 
The expected projection matrix $\E[\P]$ arises in the analysis through the following well-known characterization of the sketch-and-project worst-case convergence rate in terms of its smallest eigenvalue, given by \cite{generalized-kaczmarz}:
\begin{equation}
    \Rate_\B(\A,k) \ =\  \lambda_{\min}(\E[\P])\qquad\text{for}\qquad\P := \At^\top\S^\top(\S\At\At^\top\S^\top)^\dagger\S\At,\label{eq:rate-lambda}
\end{equation}
where $\At=\A\B^{-1/2}$, see more details below in \eqref{eq:characterization}. Thus, lower bounding the smallest eigenvalue of $\E[\P]$ immediately implies a convergence guarantee for sketch-and-project. Yet, spectral analysis of the expected projection matrix has proven challenging for most sketching distributions, even such standard ones as the Gaussian sketch. We address this challenge, going even beyond the smallest eigenvalue, for Gaussian, sub-gaussian and sparse sketches.

The key technical part of our analysis is a careful decomposition of the projection matrix $\P$ via rank-one update formulas, and utilizing concentration of random quadratic forms based on the Hanson-Wright inequality. The connection to Randomized SVD error comes from the fact that this error can also be expressed as a function of the expected projection matrix, namely, $\Err(\At,k) = \tr\At^\top\At(\I-\E[\P])$, and this function naturally arises in our analysis. We remark that our proof in the Gaussian case (Theorem \ref{t:gaussian}) relies on a completely different decomposition of $\P$ than our full-spectrum analysis for general sub-gaussian sketches (Theorem~\ref{t:sub-gaussian}). This is why the former applies to a broader range of sketch sizes and input matrices, whereas the latter applies to a wider family of sketching distributions and also yields tight two-sided estimates for the larger eigenvalues of $\E[\P]$. Finally, the sharpness of our resulting estimates for $\Rate_\B(\A,k)$ is also supported by the experiments, see Figure~\ref{fig-surrogates}. We note that the actual convergence rate can and will be somewhat better than the worst-case convergence rate $\Rate_\B(\A,k)$ due to some iterates  not being in the worst-case position.

\subsection{Paper organization and roadmap} The paper is organized as follows. 
We start by providing additional background and related work in Section \ref{s:related}. Then, in Section~\ref{s:main-results}, we give formal statements and additional discussion of all our main results: 
In \ref{s:main-gaussian}, we give a lower bound on $\lambda_{\min}(\E[\P])$ under the assumption that the sketching matrix has Gaussian entries, and translate it to convergence rate guarantees for sketch-and-project for data with various spectral decays. In~\ref{s:main-sub-gaussian}, we give full-spectrum two-sided estimates for $\E[\P]$ for a range of sketching distributions that satisfy sub-gaussian concentration. They lead to sharper convergence rate estimates when the sketch size $k$ is smaller than the stable rank of the matrix $\A$. In~\ref{s:main-sparse}, we show that certain very \emph{sparse sketching matrices} (LESS embeddings), are covered by our full-spectrum analysis of the expected projection. Finally, in~\ref{s:main-newton}, we show the extensions to certain stochastic Newton methods in convex optimization.
Section~\ref{s:all-proofs} contains all main proofs of the theorems listed above. Empirical evidence is given in Section~\ref{s:experiments}. The appendix contains auxiliary lemmas and additional experimental data.

\subsection{Notations} We use $\a_i^\top$ and $\sigma_i(\A)$ to denote the $i$th row and $i$th largest singular value of matrix $\A$, while $\A^\dagger$ denotes the Moore-Penrose pseudoinverse. Also, $\|\A\|$, $\|\A\|_F$ and $\|\A\|_*$ are the spectral/operator, Frobenius/Hilbert-Schmidt and trace/nuclear norms, respectively. For positive semidefinite matrices $\B$ and $\C$, we use $\B\preceq\C$ to denote the Loewner ordering, and $\lambda_i(\B)$ is the $i$th largest eigenvalue. We let $\one_{\EE}$ be the indicator function of an event $\EE$, and $\neg \EE$ for its complement event.
We use $\Rate(\A,k)$ as a short-hand for  \eqref{sp-rate} when $\B=\I$. We let $C, C', C_1, c$ denote absolute constants whose values might differ from line to line.


\section{Background and related work}\label{s:related}
In this section, we put our results in perspective by discussing some relevant prior work, including a brief introduction to Randomized SVD, as well as the Kaczmarz algorithm and its extensions.

\subsection{Randomized singular value decomposition}\label{seq:randSVD}
This is a family of methods which use a randomized subspace generated from a sketch of the input matrix to accelerate performing restricted versions of matrix factorizations such as QR and SVD \cite{tropp2011structure}, as follows:
\begin{enumerate}
    \item First,  generate a sketch $\S\A$ of size $k\times n$, letting $\Q$ be the orthonormal basis for the row-span of $\S\A$, so that $\A\approx\A\Q\Q^\top$.
    \item Next, use $\Q$ to help compute a factorization of $\A$. E.g., we can compute an SVD of the small matrix $\A\Q$, letting $\A\Q= \U\Sigmab\widetilde\V^\top$, and then set $\V=\Q\widetilde\V$, so that $\A\approx\U\Sigmab\V^\top$. 
\end{enumerate} 
When the sketch is based on a block identity matrix, then this approach is also known as Column/Row Subset Selection  \cite{BoutsidisMD08}.
It is important that the number of columns in $\Q$ (i.e., the sketch size $k$) is small, as that reduces the computational cost of the second step. While there are many factorization algorithms that can be applied to this paradigm, the effectiveness of this approach is limited by the approximation error of the random orthonormal basis $\Q$. One common way to measure the error is via the expected Frobenius norm\footnote{The spectral norm is also standard, but the Frobenius norm best suits our analysis of sketch-and-project.} of the residual after projecting onto the subspace defined by the basis (e.g., see \cite{BoutsidisMD08,tropp2011structure}):
\begin{equation}
    \Err(\A,k) \ :=\ \E\,\|\A-\A\Q\Q^\top\|_F^2 = \E\,\|\A(\I-(\S\A)^\dagger\S\A)\|_F^2,\label{svd-error}
\end{equation}
where $\Q\Q^\top=\A^\top\S^\top(\S\A\A^\top\S^\top)^\dagger\S\A =(\S\A)^\dagger\S\A =\P$ is the sketched projection matrix which is central to our analysis of sketch-and-project.

\subsection{Sketching for linear systems}
\label{s:related-linear}
One of the most popular variants of the sketch-and-project method is the Kaczmarz algorithm for solving linear systems \cite{kaczmarz1937angenaherte}, which solves the system $\A \x = \b$ by sequential projections of the iterates onto the individual equations. 
The method is especially efficient for solving tall highly overdetermined linear systems, and in streaming applications. Its convergence guarantees of the form \eqref{sp-rate} were first proved in \cite{strohmer2009randomized} with $\Rate \geq \frac{\sigma_{\min}^2(\A)}{\|\A\|_F^2}$ under a randomized sampling rule (sampling proportionally to the row-norms of $\A$), which is exactly equivalent to sketch-and-project update \eqref{sketch-and-project} with $\B = \I$ and $\S$ being a random standard basis vector in $\R^n$. 

Variants of the Kaczmarz method that exploit several rows at each iteration, often referred to as block methods, have been extensively studied. In \cite{needell2014paved}, it is shown that 
$\Rate \geq c\frac{\sigma_{\min}^2(\A)}{\log(n)\|\A\|^2}$
can be guaranteed for a particular block partition of the matrix $\A$. Empirically, block Kaczmarz method significantly outperforms randomized Kaczmarz method but the extent of this advantage, depending on $k$ and $\A$, remained somewhat mysterious. In the special case of sketch size $k = 2$, the rate acceleration was connected to the row coherence of the matrix $\A$ \cite{needell2013two}. In \cite{haddock2021greed}, the rate of block Kaczmarz method was bounded by the rate of Sampling Kaczmarz-Motzkin method, which does not require a pre-determined block size or a fixed block partition, but has slower iterations and no explicit dependence of the theoretical rate on the spectrum of $\A$.

In \cite{rebrova2021block}, the authors show a nearly-linear growth of the convergence rate with the sketch size, for continuous sketching distributions. Namely, it is proved that for some constant $C>0$,
$$
\Rate(\A,k)\geq \frac{k\sigma_{\min}^2(\A)}{C(\sqrt k \|\A\| + \|\A\|_F)^2}, 
$$
under the assumption that $\S$ is a $k \times m$ matrix with independent standard Gaussian entries. Note that this establishes linear growth only for $k$ that is smaller than the stable rank of the matrix $\A$. 
Compare this to our Corollary \ref{cor:poly-decays}, where we show $\Rate(\A,k)\geq ck\sigma_{\min}^2(\A)/\|\A\|_F^2$ for arbitrary matrices, and even better rates under certain spectral decay assumptions (Corollaries \ref{cor:poly-decays} and \ref{c:flat-tailed}).
Although the analysis in \cite{rebrova2021block} can be slightly extended towards more general sketching distributions, one of the key parts of the proof is a random matrix deviation inequality resulting in the unavoidable mixture of the spectral and Frobenius norm in the denominator. 
In this work, we propose a new approach for the analysis of the random sketched projection matrix, based on rank one update formulas rather than on splitting a matrix into a product of two matrices followed by~decoupling. 

\subsection{Spectral analysis of expected projections}Full-spectrum analysis of the expected projection is not available for most block sampling distributions (i.e., those that select a random subset of rows of $\A$). 

One notable exception is sketches based on Determinantal Point Processes (DPPs, \cite{dpps-in-randnla}). These are distributions over subsets of rows, where not only the individual row probabilities, but also their pairwise correlations, are carefully chosen based on the matrix $\A$. The resulting sketch is therefore more expensive to construct, although practical algorithms are available in some settings \cite{dpp-intermediate,alpha-dpp}. In this context, \cite{randomized-newton} and \cite{rodomanov2020randomized} gave exact characterizations of the spectrum of $\E[\P]$ under certain DPP distributions, and they used these characterizations to analyze the convergence of a coordinate descent method based on sketch-and-project. These results were later extended and adapted to Randomized SVD-type error bounds \cite{nystrom-multiple-descent} and interpolated regression analysis \cite{surrogate-design}. This line of works relies on proof techniques which are unique to DPP-based sketches, and therefore it is much less broadly applicable, compared to our unified analysis of Gaussian, sub-gaussian and sparse sketching~matrices.

In the context of sub-gaussian sketches, full-spectrum analysis of the expected projection matrix was initiated by \cite{precise-expressions}.
Their primary focus is on Randomized SVD and they study the
full-spectrum analysis of the expected \emph{residual} projection, i.e.,
$\E[\I-\P]$. Note that in the context of
sketch-and-project the goal is to lower-bound the smallest
eigenvalue of $\E[\P]$. A multiplicative upper
bound on the largest eigenvalue of the residual projection, i.e.,
$\lambda_{\max}(\E[\I-\P])\leq(1+\epsilon)\cdot (1-\rho)$ implies a much weaker \emph{additive} bound on the convergence rate, i.e.,
$\lambda_{\min}(\E[\P])\geq \rho - \epsilon$. 
Since $\rho$ scales with
the smallest singular value of $\A$, this means that the worst-case convergence rate
bound derived from their result is vacuous unless $\epsilon< \rho$ and the
condition number of $\A$ is close to 1. In this work, we perform the full-spectrum analysis directly on the expected projection, rather than its residual, and as a result, are able to obtain a \emph{multiplicative} bound on the smallest eigenvalue, i.e., $\lambda_{\min}(\E[\P])\geq (1-\epsilon)\cdot\rho$, which is non-vacuous as long as $\epsilon<1$, regardless of the value of $\rho$. To achieve this, in our analysis (Theorem \ref{t:sub-gaussian}) we must overcome a key obstacle that does not arise in \cite{precise-expressions}: the rank-one decomposition of the expected projection matrix yields potentially non-symmetric and highly ill-conditioned random matrices. We address this by a careful symmetrization argument combined with a self-recursive bound.

\subsection{Sketch-and-project beyond linear systems}
 We treated linear solvers as our main application example mainly because it is a useful showcase of the phenomena appearing in sketch-and-project methods, related to the sketch sizes and sketching distributions, as well as the spectral properties of the data matrix. However, our analysis is applicable to any instance of sketch-and-project. A direct extension of solving overdetermined linear systems is feasibility questions for linear and convex feasibility problems, studied in the sketch-and-project viewpoint in \cite{necoara2021randomized}. Special cases of projection-based algorithms for linear feasibility, as well as their block variants, were studied earlier in \cite{agmon1954relaxation, leventhal2010randomized, briskman2015block,de2017sampling}.
 
 Further, numerous extensions of sketch-and-project have been aimed at efficiently minimizing convex functions \cite{Gower2019,hanzely2020stochastic}, solving nonlinear equations \cite{sketched-newton-raphson} and inverting matrices \cite{gower2017randomized}. In Section \ref{s:main-newton}, we explain how our convergence results can be extended to these settings using the example of minimizing convex functions with stochastic Newton methods \cite{Gower2019,hanzely2020stochastic}. This is possible whenever the convergence analysis relies on the smallest eigenvalue of the expected sketched projection matrix, sometimes referred to as the \emph{stochastic condition number} in this line of works. Other examples where this setup occurs include the Sketched Newton-Raphson algorithm \cite{sketched-newton-raphson} and the Stochastic Iterative Matrix Inversion algorithm \cite{gower2017randomized}.
\section{Main results}\label{s:main-results}
In this section, we formulate our main theoretical results. Here and further, without loss of generality, we let $\B=\I$ in the formulation  \eqref{sketch-and-project} of sketch-and-project (to revert this, it suffices to replace $\A$ with $\A\B^{-1/2}$). The sketched projection matrix is then:
\begin{equation*}
    \P := \A^\top\S^\top(\S\A\A^\top\S^\top)^\dagger\S\A = (\S\A)^\dagger\S\A,
\end{equation*}
where $\S$ is $k\times m$. The iteration step of sketch-and-project reduces to the following update:
\begin{equation}\label{sp-iteration}
\x_{t+1} = \x_t - (\S\A)^\dagger\S(\A\x_{t}-\b) = \x_t- \P(\x_t-\x_*).
\end{equation}
Relying on the characterization given by \cite{generalized-kaczmarz}, the above update satisfies:
\begin{equation}
    \E\,\|\x_{t+1}-\x_*\|^2 = \|\x_t-\x_*\|_{\E[\I-\P]}^2\leq(1-\lambda_{\min}(\E[\P]))\cdot \E\,\|\x_t-\x_*\|^2\label{eq:characterization}   
\end{equation}
and the inequality becomes an equality when $\x_t-\x_*$ is the eigenvector of $\E[\P]$ associated with its smallest eigenvalue. By the definition in \eqref{sp-rate}, this means that $\Rate(\A,k)= \lambda_{\min}(\E[\P])$,  so in the remainder of this section we focus primarily on lower-bounding $\lambda_{\min}(\E[\P])$. 

\subsection{Smallest eigenvalue analysis with Gaussian sketches}
\label{s:main-gaussian}
The first part of our results gives lower bounds for the spectrum of the sketched projection matrix in the case of standard Gaussian sketches. The result of Theorem~\ref{t:gaussian} holds for any matrix $\A \in \R^{m \times n}$ and all sketch sizes up to $n/4 - o(n)$, see the details in Remark~\ref{r:sketch-sizes}. It shows that the convergence rate improves at least linearly with respect to the sketch size $k$. Additionally, in Corollary~\ref{cor:poly-decays},  we show faster rate increases with respect to $k$ for matrices with fast spectral decays, and in Corollary~\ref{c:flat-tailed}, we show that for certain matrices, the rate can become independent of the condition number with the right choice of $k$.  
The proof of Theorem~\ref{t:gaussian} is in Section~\ref{s:proofs-gaus}.

Here and throughout, for a random $k\times m$ sketching matrix $\S$, we will denote the projection onto the span of $\S\A$, along with its Randomized SVD error measured via Frobenius norm, as:
\begin{equation}\label{randsvd_error}
	\P:=(\S\A)^\dagger\S\A\qquad\text{and}\qquad\Err(\A,k) := \E\,\|\A(\I-\P)\|_F^2. 
\end{equation}

  \begin{theorem}\label{t:gaussian}
    Let $\A$ be an $m\times n$ matrix with rank $n$ and let $\S$ be a $k\times m$ Gaussian matrix. 
    Then, the smallest eigenvalue of $\E[\P]$ satisfies:
\begin{equation} \label{thm1_claim}
      \lambda_{\min}(\E[\P]) \geq (1-\epsilon) \frac{k\sigma_{\min}^2(\A)}{\Err(\A,k-1)}
      \qquad\text{where}\qquad
      \epsilon \leq 
      \frac{4k}n + \frac{8\log(3n)}n.
    \end{equation}
    \end{theorem}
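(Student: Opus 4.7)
The plan is to exploit the rotational and sign-flip invariance of the Gaussian sketch to diagonalize the problem, express the smallest diagonal entry of $\E[\P]$ via a Sherman-Morrison rank-one update, concentrate the resulting quadratic form with Hanson-Wright, and finally close a self-consistent inequality that surfaces $\Err(\A, k-1)$ in the denominator via a separate Jensen/monotonicity argument in the $(k{-}1)$-row setting.

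First I would reduce to $\A = \Sigma$ diagonal: rotational invariance of the Gaussian $\S$ lets us pass to the basis of right singular vectors of $\A$, and coordinate sign-flip symmetry forces $\E[\P]$ to be diagonal, with $\lambda_{\min}(\E[\P]) = \E[\P_{nn}] =: p_n$ by a monotonicity comparison in $\sigma_i$. With $\mathbf{G} = \S\U$ (still Gaussian, where $\U$ collects the left singular vectors), $\H = \mathbf{G}\Sigma$, and columns $\mathbf{g}^{(j)} \in \R^k$ of $\mathbf{G}$, set $\B_n := \sum_{j\neq n}\sigma_j^2 \mathbf{g}^{(j)}\mathbf{g}^{(j)\top}$ and apply Sherman-Morrison to $\H\H^\top = \B_n + \sigma_n^2 \mathbf{g}^{(n)}\mathbf{g}^{(n)\top}$ to obtain
\[
\P_{nn} \;=\; \frac{\sigma_n^2 X_n}{1+\sigma_n^2 X_n},\qquad X_n \;:=\; \mathbf{g}^{(n)\top}\B_n^{-1}\mathbf{g}^{(n)},
\]
where the independence of $\mathbf{g}^{(n)} \sim N(0,\I_k)$ from $\B_n$ makes $X_n$ amenable to Hanson-Wright. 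Conditional on $\B_n$, Hanson-Wright gives $X_n \geq (1-\eta)T$ with high probability, where $T := \tr(\B_n^{-1})$, so the main task becomes a lower bound on $\E[T]$.

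Here I would use the deterministic trace identity
\[
k \;=\; \sum_{j\neq n}\sigma_j^2\, \mathbf{g}^{(j)\top}\B_n^{-1}\mathbf{g}^{(j)}\,,
\]
which follows from the fact that $\H_{-n}^\top(\H_{-n}\H_{-n}^\top)^{-1}\H_{-n}$ is a rank-$k$ projection in $\R^{n-1}$ whose diagonal entries are precisely the summands. For each $j\neq n$, a second Sherman-Morrison step isolates $\mathbf{g}^{(j)}$ from $\B_n$ (producing $\sigma_j^2 Y_j/(1+\sigma_j^2 Y_j)$ with $Y_j := \mathbf{g}^{(j)\top}\B_{-jn}^{-1}\mathbf{g}^{(j)}$), and another application of Hanson-Wright concentrates $Y_j$ around $\tr(\B_{-jn}^{-1})\geq T$ (the inequality by monotonicity of $\mathbf{M}\mapsto \tr(\mathbf{M}^{-1})$ under deletion of psd rank-one terms). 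Taking expectations, combining both concentration steps by a union bound, and using the concavity and monotonicity of $y\mapsto y/(1+\sigma_j^2 y)$ yields, after routine algebra, the self-consistent inequality
\[
k \;\geq\; (1-\epsilon_1)\, T \sum_{j\neq n}\frac{\sigma_j^2}{1+\sigma_j^2 T}\,.
\]
The complementary step is to upper-bound the sum by $\Err(\A,k{-}1)$: applying Jensen's inequality to the analogous Sherman-Morrison representation of $(\E[\P_{k-1}])_{jj}$ gives $1 - (\E[\P_{k-1}])_{jj} \geq 1/(1+\sigma_j^2 T^{(k-1)}_j)$ for the $(k{-}1)$-row analogue $T^{(k-1)}_j$, and a monotonicity comparison $T \geq T^{(k-1)}_j$ (which is cleanest via inverse-Wishart moment formulas in the flat-spectrum case and extends via an interlacing/coupling argument in general) implies $\sigma_j^2/(1+\sigma_j^2 T) \leq \sigma_j^2(1-(\E[\P_{k-1}])_{jj})$; summing yields $\sum_{j\neq n}\sigma_j^2/(1+\sigma_j^2 T) \leq \Err(\A,k{-}1)$, and hence $T \geq (1-\epsilon_1)\,k/\Err(\A,k{-}1)$. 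Plugging back into $\P_{nn}$ with one final union bound gives the claim.

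The hardest part I anticipate is the bookkeeping: rigorously justifying the monotonicity $T \geq T^{(k-1)}_j$ for arbitrary spectra of $\A$, and tracking Hanson-Wright failure probabilities through the roughly $n$ leave-one-out events to achieve precisely $\epsilon \leq 4k/n + 8\log(3n)/n$ — with the $4k/n$ capturing the relative-concentration error at the scale of the $k{\times}k$ matrices $\B_n,\B_{-jn}$, and the $8\log(3n)/n$ coming from union-bounding over the $n$ coordinate-wise concentration events, each at failure probability $O(1/n)$.
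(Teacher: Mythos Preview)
Your reduction to the diagonal case, the Sherman--Morrison representation $\P_{nn}=\sigma_n^2 X_n/(1+\sigma_n^2 X_n)$ with $X_n=\mathbf g^{(n)\top}\B_n^{-1}\mathbf g^{(n)}$, and the use of Hanson--Wright to concentrate $X_n$ around $T=\tr(\B_n^{-1})$ all match the paper's Steps~1--2. The divergence, and the gap, is in how you lower-bound $T$.

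Your ``self-consistent inequality'' has the wrong sign for the conclusion you draw. From $k=\sum_{j\neq n}\sigma_j^2 Y_j/(1+\sigma_j^2 Y_j)$, the concentration $Y_j\approx\tr(\B_{-jn}^{-1})\geq T$ together with monotonicity of $y\mapsto y/(1+\sigma_j^2 y)$ gives
\[
k\ \geq\ (1-\epsilon_1)\,T\sum_{j\neq n}\frac{\sigma_j^2}{1+\sigma_j^2 T},
\]
as you write. But the right-hand side is an \emph{increasing} function of $T$, so this is an \emph{upper} bound on $T$, not a lower bound. Combining it with $\sum_{j\neq n}\sigma_j^2/(1+\sigma_j^2 T)\leq\Err(\A,k-1)$ does not yield $T\geq(1-\epsilon_1)k/\Err(\A,k-1)$: from $T\cdot\Sigma\leq k/(1-\epsilon_1)$ and $\Sigma\leq\Err$ one cannot conclude $T\cdot\Err\geq k(1-\epsilon_1)$. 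To make the column-wise route work you would need $Y_j\leq(1+\epsilon)T$, which requires controlling how much $\tr(\B_{-jn}^{-1})$ exceeds $T$ after deleting a rank-one term --- and that excess can be large. The separate ``monotonicity comparison $T\geq T_j^{(k-1)}$'' you flag as hard is also ill-posed as stated (a random $T$ compared against an expectation for a different-sized matrix), and even if reformulated it does not repair the direction problem above.

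The paper bypasses all of this with a \emph{row-wise} argument (Proposition~\ref{lem:schur-1}): first it replaces $\B_n^{-1}$ by $\Y=(\S\D^2\S^\top)^{-1}\preceq\B_n^{-1}$ so that the outer Gaussian $\s$ is independent of $\Y$; then, by the Schur complement applied to the $k\times k$ matrix $(\X\X^\top)^{-1}$ with $\X=\S\D$, one has $[(\X\X^\top)^{-1}]_{ii}=1/(\x_i^\top\Pperp{i}\x_i)$, and since the rows are exchangeable,
\[
\E[\tr\,\Y\mid\mathcal U]\ =\ k\,\E\!\left[\frac{1}{\x_k^\top\Pperp{k}\x_k}\,\Big|\,\mathcal U\right]\ \geq\ \frac{k\,\PP(\mathcal U)}{\E[\x_k^\top\Pperp{k}\x_k]}\ =\ \frac{k\,\PP(\mathcal U)}{\Err(\A,k-1)},
\]
using only Jensen for the convex map $x\mapsto1/x$. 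This single step delivers the lower bound on the trace that your column-wise decomposition cannot, and it is also what produces the exact constants $4k/n+8\log(3n)/n$ after choosing the conditioning event $\mathcal U_\alpha$ via a Gaussian smallest-singular-value bound.
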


Note that the right hand side uses $\Err(\A,k-1)$, i.e., Randomized SVD error based on a Gaussian matrix of size $k-1$, even though the left hand side is defined using a sketch of size $k$. In particular, for $k=1$ we get $\Err(\A,0)$, which simplifies to $\|\A\|_F^2$.
    \begin{remark}[Sketch size and dependence on $\epsilon$] \label{r:sketch-sizes} 
    It can be verified that for any $n\geq 250$ and $k\leq n/5$ we have $\epsilon\leq 0.9$, whereas for any $n\geq 1000$ and $k\leq n/20$ we have $\epsilon\leq 0.25$ (see also Remark~\ref{r:gaussian-variant}).  
    If $k$ is fixed and $n \to \infty$, then $\epsilon \to 0$. Thus, since $\Err(\A,k-1) \le \|\A\|_F^2$ for any $k\geq 1$, Theorem~\ref{t:gaussian} immediately gives a simple guarantee for sketch-and-project:
   \begin{equation}\label{1-s} 
   \Rate(\A,k) \geq (1-\epsilon_{n,k}) \frac{k\sigma_{\min}^2(\A)}{\|\A\|_F^2} \to_{n \to \infty} \frac{k\sigma_{\min}^2(\A)}{\|\A\|_F^2}. \end{equation}
   When $k=1$, this recovers the standard rate $\sigma_{\min}^2(\A)/\|\A\|_F^2$ of Randomized Kaczmarz  \cite{strohmer2009randomized}.
     \end{remark}
We note that even the simple bound \eqref{1-s} is often tighter than the earlier known bound for Gaussian Kaczmarz \cite{rebrova2021block}, especially for larger sketch sizes. Moreover, for matrices $\A$ with known spectral decay, the main bound \eqref{1} gives room for further improving the estimate, via the following known spectral estimate for Randomized SVD error with Gaussian sketching:
\begin{lemma}[\cite{tropp2011structure}, Theorem 10.5]\label{l:spectral1} Let $\A$ be an $m\times n$ matrix with singular values $\sigma_1\geq \sigma_2\geq...$, and let $\S$ be a $k\times m$ Gaussian matrix with $k\geq 2$. For any $2\leq p \leq k-2$, Randomized SVD error $\Err(\A,k)=\E\,\|\A(\I-\P)\|_F^2$ based on $\P=(\S\A)^\dagger\S\A$ satisfies:
\begin{equation*}
  \Err(\A,k) \leq \frac{k-1}{p-1}\cdot\sum_{i\geq k-p}\sigma_i^2.
\end{equation*}
\end{lemma}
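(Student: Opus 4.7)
The statement is the classical Frobenius-norm bound for randomized SVD with a Gaussian test matrix, so I will follow the Halko--Martinsson--Tropp strategy: derive a deterministic structural inequality by choosing a suitable ``ansatz'' matrix, then reduce the probabilistic part to two textbook Gaussian moment computations.

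\emph{Reduction and partition.} Let $\A = U\Sigmab V^\top$ be an SVD. By orthogonal invariance of the Gaussian, $G:=\S U$ is again a standard Gaussian matrix (of size $k\times\mathrm{rank}(\A)$), and the projection $\P=(\S\A)^\dagger\S\A$ is measurable w.r.t.\ $G$. Set $r:=k-p$, which satisfies $2\le r\le k-2$ by hypothesis, and block-partition
$\Sigmab=\mathrm{diag}(\Sigmab_1,\Sigmab_2)$, $U=[U_1,U_2]$, $V=[V_1,V_2]$, $G=[G_1,G_2]$ at level $r$, so that $G_1\in\R^{k\times r}$ and $G_2\in\R^{k\times(n-r)}$ are independent standard Gaussians. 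We may assume $\Sigmab_1$ is invertible, otherwise the bound is trivial.

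\emph{Deterministic structural bound.} Since $\P$ is the orthogonal projection onto $\mathrm{row}(\S\A)$, for every matrix $Y$ of suitable size the row span of $\A Y\S\A$ lies in $\mathrm{row}(\S\A)$, and therefore
\begin{equation*}
\|\A(\I-\P)\|_F^2 \;\le\; \|\A-\A Y\S\A\|_F^2.
\end{equation*}
The key algebraic choice is $Y:=V_1\Sigmab_1^{-1}G_1^\dagger$, which is a.s.\ well defined because $r<k$ implies $G_1$ has full column rank. Writing $\S\A=G\Sigmab V^\top$ and using $V_i^\top V_j=\delta_{ij}I$ together with $G_1^\dagger G_1=I_r$, a direct block expansion yields
\begin{equation*}
\A Y\S\A \;=\; U_1\Sigmab_1 V_1^\top + U_1 G_1^\dagger G_2\Sigmab_2 V_2^\top,
\end{equation*}
so that $\A-\A Y\S\A = U_2\Sigmab_2 V_2^\top - U_1 G_1^\dagger G_2\Sigmab_2 V_2^\top$. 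The two summands are Frobenius-orthogonal since $U_1^\top U_2=0$, whence
\begin{equation*}
\|\A(\I-\P)\|_F^2 \;\le\; \|\Sigmab_2\|_F^2 + \|G_1^\dagger G_2\Sigmab_2\|_F^2.
\end{equation*}

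\emph{Gaussian moment step.} Conditioning on $G_1$ and applying the identity $\E\|AGB\|_F^2=\|A\|_F^2\|B\|_F^2$ for a standard Gaussian $G$ (with $A=G_1^\dagger$, $B=\Sigmab_2$, $G=G_2$) gives
$\E\|G_1^\dagger G_2\Sigmab_2\|_F^2 = \E\|G_1^\dagger\|_F^2\cdot\|\Sigmab_2\|_F^2.$
The Wishart inverse-moment formula, valid because $p\ge 2$ implies $k\ge r+2$, yields
$\E\|G_1^\dagger\|_F^2 = \E\,\tr(G_1^\top G_1)^{-1} = \frac{r}{k-r-1} = \frac{k-p}{p-1}.$
Combining,
\begin{equation*}
\Err(\A,k) \;\le\; \Bigl(1+\tfrac{k-p}{p-1}\Bigr)\|\Sigmab_2\|_F^2 \;=\; \tfrac{k-1}{p-1}\sum_{i>k-p}\sigma_i^2 \;\le\; \tfrac{k-1}{p-1}\sum_{i\ge k-p}\sigma_i^2.
\end{equation*}

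The only conceptual step is guessing the right ansatz $Y=V_1\Sigmab_1^{-1}G_1^\dagger$; once it is in hand, the rest is block matrix multiplication together with two standard Gaussian expectations. The hypothesis $2\le p\le k-2$ is precisely what is needed for the two technical ingredients: $p\le k-2$ (i.e.\ $r\ge 2$) ensures $G_1$ has full column rank so that $G_1^\dagger$ is stable, and $p\ge 2$ is the threshold for the inverse-Wishart first moment to be finite.
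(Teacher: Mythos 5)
The paper does not prove this lemma at all---it is imported verbatim from Halko--Martinsson--Tropp (their Theorem 10.5)---and your argument is a correct reproduction of that source's proof: the deterministic bound via the ansatz $Y=V_1\Sigmab_1^{-1}G_1^\dagger$ is their Theorem 9.1 (transposed to the row-sketch convention used here), the two moment computations are their Propositions 10.1 and 10.2, and the index translation (sketch size $\ell=k$, target rank $r=k-p$, so $1+\frac{k-p}{p-1}=\frac{k-1}{p-1}$, followed by the harmless relaxation $\sum_{i>k-p}\le\sum_{i\ge k-p}$) is handled correctly. The only quibble is your closing parenthetical: almost-sure full column rank of $G_1$ needs only $k\ge k-p$, which is automatic, so the hypothesis $p\le k-2$ (i.e.\ $r\ge 2$) is simply inherited from the HMT statement rather than being what stabilizes $G_1^\dagger$, whereas $p\ge 2$ is indeed exactly the threshold for finiteness of $\E\,\tr(G_1^\top G_1)^{-1}$.
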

For example, in the cases when $\A$ exhibits polynomial or exponential spectral decays, which commonly arise in real-world data matrices, we can get the following corollary (proven in Section~\ref{sec:app-b} of the appendix). To highlight the improved dependence on sketch size $k$, we contrast this with the simple guarantee and let all rates scale with $\sigma_{\min}^2(\A)/\|\A\|_F^2$. 
  \begin{corollary}
  \label{cor:poly-decays} 
 Let $\A$ be a full rank $m\times n$ matrix and let $\S$ be a $k\times m$ Gaussian matrix. 
 There is an absolute constant $C$ such that for any $k$, sketch-and-project \eqref{sp-iteration} satisfies:
 \begin{equation}
 \textnormal{(general spectrum)}\qquad 
 \E\,\|\x_t-\x_*\|^2\leq \bigg(1 -\frac{k\sigma_{\min}^2(\A)}{C\|\A\|_F^2}\bigg)^t\cdot\|\x_0-\x_*\|^2.
 \end{equation}
 If we assume that $\A$ has a polynomial spectral decay of order $\beta> 1$, i.e.,
$\sigma_i^2(\A)\leq c i^{-\beta}\sigma_1^2(\A)$
for all $i$ and some $c>0$, then there is a constant $C=C(\beta,c)$ such that for any $k\leq n/2$:
 \begin{equation}
 \textnormal{(polynomial decay)}\qquad  
 \E\,\|\x_t-\x_*\|^2\leq \bigg(1 -\frac{k^\beta\sigma_{\min}^2(\A)}{C\|\A\|_F^2}\bigg)^t\cdot\|\x_0-\x_*\|^2.
  \end{equation}
 If we assume that $\A$ has an exponential spectral decay of order $\alpha>1$, i.e., $\sigma_i^2(\A)\leq c\alpha^{-i}\sigma_1^2(\A)$ for all $i$ and some $c>0$, then there is a constant $C=C(\alpha,c)$ such that for any $k\leq n/2$:
  \begin{equation}
     \textnormal{(exponential decay)}\qquad
 \E\,\|\x_t-\x_*\|^2\leq \bigg(1 -\frac{\alpha^k\sigma_{\min}^2(\A)}{C\|\A\|_F^2}\bigg)^t\cdot\|\x_0-\x_*\|^2.
\end{equation}
\end{corollary}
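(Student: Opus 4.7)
The overall plan is to feed the Randomized SVD error bound from Lemma \ref{l:spectral1} into the smallest-eigenvalue estimate of Theorem \ref{t:gaussian}, tuning the truncation parameter $p$ to each spectral regime, and then to pass to a geometric contraction via the standard one-step recursion $\E\,\|\x_{t+1}-\x_*\|^2 \le (1-\Rate(\A,k))\,\E\,\|\x_t-\x_*\|^2$ implicit in \eqref{eq:characterization}. A preliminary book-keeping step is to control the factor $(1-\epsilon)$ in \eqref{thm1_claim}: Remark \ref{r:sketch-sizes} shows that $\epsilon \le 1/2$ whenever $k \le n/c_0$ for a suitable absolute $c_0$, so on that range $(1-\epsilon)$ may be absorbed into the constant $C$ on the right-hand side of each claim. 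For the polynomial and exponential statements, where the range $k\le n/2$ reaches beyond this regime, I would invoke the monotonicity of $\Rate(\A,k)$ in $k$ (adding extra rows to the sketch cannot hurt the worst-case rate) and drop back to an effective sketch size $k' = \lfloor n/c_0\rfloor$, losing only a constant factor since $k/k'$ is bounded on $k\le n/2$.

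For the general-spectrum claim I would simply bound $\Err(\A,k-1)\le \|\A\|_F^2$ and plug into \eqref{thm1_claim}. For the polynomial decay $\sigma_i^2\le c\,i^{-\beta}\sigma_1^2$ with $\beta>1$, I would take $p=\lfloor k/2\rfloor$ in Lemma \ref{l:spectral1} (applied at sketch size $k-1$): the prefactor $(k-2)/(p-1)$ is then bounded by an absolute constant, while the tail sum $\sum_{i\ge k-1-p} i^{-\beta}$ is of order $k^{1-\beta}$ by an integral comparison. This gives $\Err(\A,k-1) \lesssim_{\beta,c} \sigma_1^2\,k^{1-\beta}$, and substitution produces $\Rate(\A,k)\gtrsim k^\beta \sigma_{\min}^2/\sigma_1^2\ge k^\beta \sigma_{\min}^2/\|\A\|_F^2$. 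For the exponential decay $\sigma_i^2\le c\,\alpha^{-i}\sigma_1^2$, the key is that $p$ should be chosen as a constant depending only on $\alpha$ (of order $1/\log\alpha$); this balances the linear-in-$k$ prefactor $(k-1)/(p-1)$ against the geometric tail $\sum_{i\ge k-p}\alpha^{-i}\lesssim_\alpha \alpha^{-k}$, yielding $\Err(\A,k-1)\lesssim_{\alpha,c} k\,\alpha^{-k}\sigma_1^2$. The two factors of $k$ then cancel in \eqref{thm1_claim} and one obtains $\Rate(\A,k)\gtrsim \alpha^k \sigma_{\min}^2/\|\A\|_F^2$.

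The only real obstacles are bookkeeping ones. Lemma \ref{l:spectral1} requires $2\le p\le k-2$, so the chosen values of $p$ are infeasible for very small $k$; I would dispose of these cases by noting that for bounded $k$ the polynomial and exponential bounds follow from the general-spectrum bound by enlarging the constant $C(\beta,c)$ or $C(\alpha,c)$, which the statement explicitly allows. The other mildly delicate point is the index shift between a $k$-sketch in the left-hand side of \eqref{thm1_claim} and the $(k-1)$-sketch appearing in $\Err(\A,k-1)$, but this only changes constants in the tail-sum estimates. With these caveats resolved, the three claimed convergence rates all follow by straightforward iteration of the one-step contraction.
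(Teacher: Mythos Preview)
Your proposal is correct and follows essentially the same approach as the paper's proof: combine Theorem~\ref{t:gaussian} (or its variant in Remark~\ref{r:gaussian-variant}) with the Randomized SVD bound of Lemma~\ref{l:spectral1}, tune the truncation parameter $p$, absorb small-$k$ cases into the constant, and use monotonicity of $\lambda_{\min}(\E[\P])$ in $k$ to extend beyond the sketch-size range where $\epsilon$ is controlled. The only differences are cosmetic: the paper chooses $p = k/\beta - 1$ for the polynomial case and $p = 2$ for the exponential case, whereas you take $p = \lfloor k/2\rfloor$ and $p$ a constant of order $1/\log\alpha$, respectively; all of these choices yield the same bounds up to constants depending on $\beta$ or $\alpha$. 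One small omission: the general-spectrum claim is stated for \emph{all} $k$, so the monotonicity argument you invoke for the polynomial and exponential parts is also needed there to cover $k$ beyond $n/c_0$; the paper makes this explicit.
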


We also use Theorem \ref{t:gaussian} to demonstrate that, for certain matrices $\A$, the convergence rate of sketch-and-project can become entirely independent of their condition number (to our knowledge, this is the first result of this kind). This occurs for matrices where fast spectral decay is present only in some part of the spectrum, and from a certain point onward the spectrum flattens out (i.e., a flat-tailed spectrum). This arises for example in data that is distorted by small random noise uniformly in all directions. 
\begin{corollary}\label{c:flat-tailed}
Suppose that  $\A$ has a flat-tailed spectrum, i.e., there is an index $1\leq r\leq n$ such that $\sigma_i^2(\A)\leq c\sigma_{\min}^2(\A)$ for all $i\geq r$ and some $c\geq 1$. Then, there are constants $C=C(c)$ and $C_1$ such that for any $k\geq \max\{2r,C_1\}$, Gaussian sketch-and-project  \eqref{sp-iteration} satisfies:
\begin{equation}
     \textnormal{(flat-tailed spectrum)}\qquad
 \E\,\|\x_t-\x_*\|^2\leq \bigg(1 -\frac{k}{Cn}\bigg)^t\cdot\|\x_0-\x_*\|^2.
\end{equation}
\end{corollary}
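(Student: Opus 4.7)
The plan is to deduce Corollary~\ref{c:flat-tailed} directly from Theorem~\ref{t:gaussian}, by combining its lower bound on $\lambda_{\min}(\E[\P])$ with a tailored upper bound on the Randomized SVD error $\Err(\A,k-1)$ obtained from Lemma~\ref{l:spectral1}. The key idea is that a flat tail in the spectrum lets us control the tail sum $\sum_{i\geq k-1-p}\sigma_i^2(\A)$ by $cn\sigma_{\min}^2(\A)$, as soon as the truncation index $k-1-p$ lies past the index $r$ where the spectrum flattens out.

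First I would apply Lemma~\ref{l:spectral1} to $\Err(\A,k-1)$, with the parameter $p$ set to roughly $k/2$ (precisely $p = \lceil k/2\rceil - 1$ or similar). The condition $k \geq 2r$ then ensures $k-1-p \geq r$, so the flat-tail hypothesis gives
\begin{equation*}
\sum_{i\geq k-1-p}\sigma_i^2(\A) \;\leq\; (n - r + 1)\,c\,\sigma_{\min}^2(\A) \;\leq\; c\,n\,\sigma_{\min}^2(\A).
\end{equation*}
The prefactor $\frac{k-2}{p-1}$ is bounded by an absolute constant (say $4$) as long as $k$ exceeds an absolute constant $C_1$, yielding $\Err(\A,k-1) \leq 4cn\sigma_{\min}^2(\A)$. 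Plugging this back into Theorem~\ref{t:gaussian}:
\begin{equation*}
\lambda_{\min}(\E[\P]) \;\geq\; (1-\epsilon)\,\frac{k\,\sigma_{\min}^2(\A)}{\Err(\A,k-1)} \;\geq\; \frac{(1-\epsilon)\,k}{4cn}.
\end{equation*}
In the regime where Theorem~\ref{t:gaussian} gives non-trivial bounds (i.e., $k$ not too close to $n$, as per Remark~\ref{r:sketch-sizes}), the factor $1-\epsilon$ is bounded away from zero by an absolute constant and is absorbed into the constant $C=C(c)$. Feeding this lower bound into the contraction characterization \eqref{eq:characterization}, i.e., $\E\,\|\x_{t+1}-\x_*\|^2 \leq (1-\lambda_{\min}(\E[\P]))\,\E\,\|\x_t-\x_*\|^2$, iterated $t$ times, yields the claimed geometric decay with rate $k/(Cn)$.

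The one place requiring care is the balancing act in the choice of $p$: it must be large enough so that the Lemma~\ref{l:spectral1} prefactor $(k-2)/(p-1)$ is an absolute constant, yet small enough so that the truncation index $k-1-p$ stays above $r$. Both conditions are made simultaneously satisfiable precisely by the assumption $k\geq \max\{2r,C_1\}$, which is the raison d'être of the two lower bounds on $k$ in the statement. Once $p\asymp k/2$ is fixed, everything else is a mechanical substitution, and notably no dependence on $\sigma_{\max}(\A)$ or the condition number survives because the $\sigma_{\min}^2(\A)$ factors in the numerator and denominator cancel.
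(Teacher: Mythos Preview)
Your core strategy---bound $\Err(\A,k-1)$ via Lemma~\ref{l:spectral1} with $p\asymp k/2$, use the flat-tail assumption to control the tail sum by $cn\sigma_{\min}^2(\A)$, then plug into Theorem~\ref{t:gaussian}---matches the paper's approach (the paper takes $p=k/3-1$, but that is immaterial). However, there is a genuine gap: the corollary is asserted for \emph{all} $k\geq\max\{2r,C_1\}$, including $k$ comparable to $n$, and your argument only covers the regime where Theorem~\ref{t:gaussian} yields $1-\epsilon$ bounded away from zero, i.e., roughly $k\lesssim n/5$. You acknowledge this restriction but do not close it, so as written the proof does not establish the full claim.

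The paper addresses this in two pieces. First, it uses Remark~\ref{r:gaussian-variant} rather than Theorem~\ref{t:gaussian} directly, which gives a usable (if less sharp) constant for all $k\leq 3n/4$ once $n$ is moderately large. Second, for $k>3n/4$ it invokes the monotonicity of $\lambda_{\min}(\E[\P])$ in $k$: once the bound $\lambda_{\min}(\E[\P])\geq c'k/n$ holds at $k=3n/4$, it automatically holds (up to a constant factor $4/3$) for all larger $k$. There is also a boundary case: if $2r\geq n$ then $k\geq 2r$ forces $k\geq n$, in which case $\P=\I$ and the bound is trivial; the paper disposes of this separately. You need some version of these extensions to cover the full range of $k$ claimed in the statement.
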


 \subsection{Full-spectrum analysis with sub-gaussian sketches}
 \label{s:main-sub-gaussian}
Dense Gaussian sketches present one of the most convenient models for analysis, but they are also the least practical. While it is often a perfect tool for observing the trends that are also exhibited by other sketching distributions, extending the results to more general sketching models is far from trivial. In this section, we present our second main result: a full-spectrum analysis of the expected sketched projection matrix for sketching distributions satisfying certain concentration assumptions. 

  \begin{definition}[Sub-gaussian concentration]\label{d:subgaus}
A random variable $X$ satisfies sub-gaussian concentration with constant $K$ if 
$\inf \left\{t > 0 : \E \exp(X^2/t^2) \leq 2 \right\} \le K.$ A random $n$-dimensional vector $\x$ is $K$-sub-gaussian if $\v^\top\x$ satisfies $K$-sub-gaussian concentration for any unit vector~$\v$.
\end{definition}
 \begin{definition}[Euclidean concentration]\label{d:euclidean}
An $n$-dimensional random vector $\x$ satisfies Euclidean concentration with constant $L$ if for any $m\times n$ matrix $\A$, the random variable
$X=\|\A\x\|-\|\A\|_F$
is $L\|\A\|$-sub-gaussian.
\end{definition}
These assumptions on the rows of the sketching matrix $\S$ are designed so that they naturally capture the behavior of random vectors with i.i.d. sub-gaussian entries (including Gaussian vectors), but also, so that they can be later extended to sparse vectors (more details and properties of such distributions can be found, e.g., in \cite{vershynin2018high}).

 \vspace{2mm}
 
\begin{theorem}\label{t:sub-gaussian}
    Let $\A$ be an $m\times n$ matrix with condition number $\kappa=\sigma_{\max}(\A)/\sigma_{\min}(\A)$ and stable rank $r =  \|\A\|_F^2/\|\A\|^2$, and let $\S$ be a $k\times m$ random matrix with i.i.d.~isotropic $K$-sub-gaussian rows that satisfy Euclidean concentration with constant $L$. Let $\P$ denote the projection onto the span of $\S\A$, and let $\Err(\A, k)$ denote rank $k$ Randomized SVD error \eqref{randsvd_error}.
        There are absolute constants $C_1,C_2>0$ such that if $r\geq C_1\max\{k,L^2\log \kappa,K^4L^2\}$,
    then: 
    \begin{equation*}
    (1-\epsilon)\overbar\P \preceq\E[\P]\preceq (1+\epsilon)\overbar\P,
    \end{equation*}
where $\overbar\P = \gamma_k \A^\top \A (\gamma_k \A^\top \A + \I)^{-1}$,          $\gamma_k = k/\Err(\A,k-1)$, and $\epsilon \leq C_2K^2L/\sqrt r$.
\end{theorem}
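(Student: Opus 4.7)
My plan is to prove the theorem via a leave-one-out rank-one decomposition of $\P$, combined with Hanson-Wright concentration and a self-consistent identification with the surrogate $\overbar\P$. Let $\P^{(i)}$ denote the orthogonal projection onto the span of $\S\A$ with its $i$-th row removed. Since under the sub-gaussian assumption $\s_i^\top\A$ almost surely lies outside the range of $\P^{(i)}$, the standard projection update formula gives
\[
\P \;=\; \P^{(i)} + \frac{(\I-\P^{(i)})\A^\top\s_i\s_i^\top\A(\I-\P^{(i)})}{\s_i^\top\A(\I-\P^{(i)})\A^\top\s_i}.
\]
Exchangeability of the rows gives $\E[\P^{(i)}] = \E[\P_{k-1}]$ for every $i$, so taking expectations yields the exact recursion $\E[\P_k] = \E[\P_{k-1}] + \E[R]$, where $R$ denotes the rank-one term above for any fixed $i$ and $\P_j$ denotes the projection based on $j$ i.i.d.\ sub-gaussian rows.

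Next I would concentrate the denominator of $R$, which is a quadratic form in the $K$-sub-gaussian vector $\s_i$, using Hanson-Wright sharpened via the Euclidean concentration constant $L$ of Definition \ref{d:euclidean}. On a high-probability event, this denominator equals $\|\A(\I-\P^{(i)})\|_F^2$ up to multiplicative error $1 \pm O(K^2L/\sqrt r)$. Isotropy of $\s_i$ gives the conditional expectation of the numerator as $(\I-\P^{(i)})\A^\top\A(\I-\P^{(i)})$, so that
\[
\E[\P_k] \;\approx\; \E[\P_{k-1}] + \frac{\E\!\left[(\I-\P_{k-1})\A^\top\A(\I-\P_{k-1})\right]}{\Err(\A,k-1)},
\]
where the denominator is exactly $\E\|\A(\I-\P_{k-1})\|_F^2 = \Err(\A,k-1)$ by linearity (since $\I-\P_{k-1}$ is a projection). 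To identify this stochastic recursion with the surrogate family $\overbar\P_j = \gamma_j\A^\top\A(\gamma_j\A^\top\A+\I)^{-1}$, observe that $\I-\overbar\P(\gamma) = (\gamma\A^\top\A+\I)^{-1}$, whence $\tfrac{d}{d\gamma}\overbar\P(\gamma) = (\I-\overbar\P(\gamma))\A^\top\A(\I-\overbar\P(\gamma))$. The rank-one update direction therefore matches exactly the infinitesimal increment of the surrogate in $\gamma$, and the definition $\gamma_k = k/\Err(\A,k-1)$ prescribes precisely the step $\gamma_k - \gamma_{k-1} \approx 1/\Err(\A,k-1)$ needed for the deterministic surrogate to track the stochastic recursion.

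The main obstacle, flagged explicitly by the authors, is converting this self-consistent identification into the stated two-sided Loewner bound $(1\pm\epsilon)\overbar\P$, which is multiplicative rather than an additive operator-norm estimate. Two difficulties arise. First, $(\I-\P^{(1)})\A^\top\A(\I-\P^{(1)})$ is quadratic in $\P^{(1)}$, so $\E[(\I-\P^{(1)})\A^\top\A(\I-\P^{(1)})] \neq (\I-\E[\P^{(1)}])\A^\top\A(\I-\E[\P^{(1)}])$; the resulting residual terms, once compared against the deterministic surrogate, need not be symmetric. Second, these expressions are highly ill-conditioned on the low-singular-value subspace of $\A$. I would address both by symmetrizing via conjugation with $(\gamma_k\A^\top\A+\I)^{1/2}$, which transforms the target bound into the simpler statement $(1\pm\epsilon)\gamma_k\A^\top\A$ in the new frame, and then closing a self-recursive inequality for the error $\Delta_k := \E[\P_k] - \overbar\P_k$ of the form $\|\Delta_k\|_\star \leq \epsilon + c\,\|\Delta_{k-1}\|_\star$ in a suitable $\overbar\P$-weighted norm, with $\epsilon = O(K^2L/\sqrt r)$ from Hanson-Wright and $c < 1$ guaranteed by the stable-rank condition $r \geq C_1 \max\{k, L^2\log\kappa, K^4L^2\}$. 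The $\log\kappa$ factor enters through a union bound on Hanson-Wright deviations needed to cover the spectrum of $\A$ uniformly, while $K^4L^2$ arises from the exponential tail of the Hanson-Wright inequality.
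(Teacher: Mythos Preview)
Your proposal shares the essential ingredients with the paper's proof---a rank-one update of the projection, Hanson--Wright concentration of the quadratic-form denominator, symmetrization to obtain a multiplicative Loewner bound, and a self-referential closing step---but the architecture is different in a way that matters. You recurse in the sketch size $k$ via the \emph{symmetric} update $\P_k = \P_{k-1} + R$ and propose to track $\Delta_k = \E[\P_k]-\overbar\P_k$ through a contraction $\|\Delta_k\|_\star \leq \epsilon + c\,\|\Delta_{k-1}\|_\star$. The paper works at a \emph{fixed} $k$: exchangeability gives $\E[\P] = k\,\E\big[(\X^\top\X)^\dagger\x_k\x_k^\top\big] = k\,\E\big[\Pperp{k}\x_k\x_k^\top/\xi\big]$ with $\xi=\x_k^\top\Pperp{k}\x_k$, the \emph{non-symmetric} rank-one form. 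The factor $k$ then matches the $k$ in $\gamma_k = k/\E[\xi]$ directly, so the surrogate $\overbar\P$ emerges in one step without any recursion in $k$. The only self-reference in the paper is that two of the three terms in its splitting are bounded by $C/\sqrt r\cdot\sqrt{1+\epsilon_1}$ with $\epsilon_1 := \|\overbar\P^{-1/2}\E[\P]\overbar\P^{-1/2}\|\leq 1+\epsilon$; this scalar fixed-point closes immediately.

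The genuine gap in your plan is the contraction $c<1$. You assert it is ``guaranteed by the stable-rank condition,'' but no mechanism is identified. Each step contributes a Hanson--Wright error of order $K^2L/\sqrt r$ in the weighted norm; absent contraction these accumulate over $k$ steps, and $r\gtrsim k$ alone does not bring the sum back down to $O(K^2L/\sqrt r)$. Relatedly, the claimed step $\gamma_k-\gamma_{k-1}\approx 1/\Err(\A,k-1)$ is only a first-order approximation (it needs $\Err(\A,k-1)\approx\Err(\A,k-2)$), so your deterministic surrogate recursion drifts from the stochastic one by an amount you have not controlled. Two smaller points: (i) the assertion that $\s_i^\top\A$ almost surely lies outside the range of $\P^{(i)}$ fails for discrete sub-gaussian rows such as Rademacher; the paper handles this by conditioning on $\{\xi\geq\tfrac12\tr\Sigmab\Pperp{k}\}$ throughout. (ii) The $\log\kappa$ in the hypothesis does not arise from a union bound across the spectrum but from the step where the failure probability $\delta$ of that conditioning event is multiplied by $\|\overbar\P^{-1}\|\leq 1+r\kappa^2$, forcing $\delta\kappa^2$ small.
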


As a direct corollary, we obtain the following convergence guarantee for sketch-and-project.
  \begin{corollary}\label{cor:proj-control}
    Under the assumptions of Theorem \ref{t:sub-gaussian},
    sketch-and-project  satisfies:
    \begin{equation}\label{rate-low}
      \Rate(\A,k) \geq (1-\epsilon)
      \frac{\gamma_k\sigma_{\min}^2(\A)}{\gamma_k\sigma_{\min}^2(\A)+1} \geq (1-\epsilon)(1-\frac kn)
      \frac{k\sigma_{\min}^2(\A)}{\Err(\A,k-1)}.
    \end{equation}
      The intermediate expression, which comes directly from the formula for $\overbar\P$, is potentially a sharper estimate of the convergence rate than the estimate derived in Theorem \ref{t:gaussian}. The difference between the two estimates is absorbed by the factor $(1-k/n)$, used to replace the denominator $\gamma_k\sigma_{\min}^2(\A)+1$ with $1$, and it is only substantial when $k$ is close to $n$.
  \end{corollary}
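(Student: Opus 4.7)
The plan is to chain the two-sided Loewner sandwich of Theorem \ref{t:sub-gaussian} with the characterization $\Rate(\A,k)=\lambda_{\min}(\E[\P])$ that drops out of \eqref{eq:characterization}. Only the lower half $\E[\P]\succeq(1-\epsilon)\overbar\P$ of the sandwich is needed: monotonicity of $\lambda_{\min}$ with respect to the Loewner order gives $\Rate(\A,k)\geq(1-\epsilon)\lambda_{\min}(\overbar\P)$. Since $\overbar\P$ is the scalar increasing function $x\mapsto \gamma_k x/(\gamma_k x+1)$ applied functionally to the positive semidefinite matrix $\A^\top\A$, its smallest eigenvalue is obtained by evaluating this function at $\lambda_{\min}(\A^\top\A)=\sigma_{\min}^2(\A)$. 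This produces the first displayed inequality $\Rate(\A,k)\geq(1-\epsilon)\,\gamma_k\sigma_{\min}^2(\A)/(\gamma_k\sigma_{\min}^2(\A)+1)$.

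For the second inequality, I would substitute $\gamma_k=k/\Err(\A,k-1)$ to rewrite the intermediate expression as $k\sigma_{\min}^2(\A)/\bigl(k\sigma_{\min}^2(\A)+\Err(\A,k-1)\bigr)$. Comparing this with $(1-k/n)\,k\sigma_{\min}^2(\A)/\Err(\A,k-1)$ and clearing denominators, the claim reduces to the matrix-independent estimate $\Err(\A,k-1)\geq(n-k)\sigma_{\min}^2(\A)$. I would establish this via the elementary trace inequality $\|\A\M\|_F^2=\tr(\M^\top\A^\top\A\M)\geq\sigma_{\min}^2(\A)\|\M\|_F^2$, which holds whenever $\A$ has full column rank. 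Applied pointwise with $\M=\I-\P_{k-1}$, where $\P_{k-1}$ is a projection of rank at most $k-1$ so that $\|\I-\P_{k-1}\|_F^2=\tr(\I-\P_{k-1})\geq n-k+1$, and then taking expectation, this yields the required bound with room to spare.

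There is no real obstacle here beyond bookkeeping; the substantive work was carried out inside Theorem \ref{t:sub-gaussian}. The only things to keep straight are the identification of $\overbar\P$ as a matrix function of $\A^\top\A$ with explicit spectrum (which, combined with $\lambda_{\min}(\E[\P])\geq(1-\epsilon)\lambda_{\min}(\overbar\P)$, already delivers the first, potentially sharper, bound), and the implicit assumption that $\A$ has full column rank, which is already embedded in the hypothesis that the condition number $\kappa$ appearing in Theorem \ref{t:sub-gaussian} is finite.
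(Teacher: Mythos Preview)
Your proposal is correct and matches the paper's approach. The paper presents this as a direct corollary with only a one-sentence sketch (``the factor $(1-k/n)$, used to replace the denominator $\gamma_k\sigma_{\min}^2(\A)+1$ with $1$''); your write-up supplies exactly the details that sketch is pointing at, namely reducing the second inequality to $\Err(\A,k-1)\geq(n-k)\sigma_{\min}^2(\A)$ and verifying it via the rank bound on $\I-\P_{k-1}$.
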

  
As a key motivation, consider matrix $\S_k$ that consists of i.i.d.~mean zero, unit variance and $K$-sub-gaussian entries (sub-gaussian sketch). Then, it satisfies the assumptions of Theorem~\ref{t:sub-gaussian} with constants $K$ and $L=O(K^2)$ (e.g., see Section 4 of \cite{gaussianization}). A practical example of this is the Rademacher sketch, which uses random $\pm1$ entries that can be generated much more efficiently than Gaussian random variables. In this case, both $K$ and $L$ are small absolute constants, and so, the theorem holds for $r\geq O(k+\log\kappa)$ with $\epsilon=O(1/\sqrt r)$. We note that the logarithmic dependence on the condition number $\kappa$ is in general unavoidable due to rare adverse events that occur for discrete sketching distributions.

In addition to the more general sketching distribution, the main difference between Theorem~\ref{t:sub-gaussian} and Gaussian Theorem~\ref{t:gaussian} is that it provides a full-spectrum analysis of the expected projection, i.e., it captures all its eigenvalues rather than just the smallest one. However, to achieve this, we must impose certain regularity assumptions on the data matrix $\A$. Namely, while Theorem~\ref{t:gaussian} only assumed that 
$\A$ has full column rank, here, we require the \emph{stable} rank $r=\|\A\|_F^2/\|\A\|^2$ of $\A$ to be sufficiently large (in particular, larger than the sketch size $k$). Roughly, this prevents a fast spectral decay in the top-$k$ part of the spectrum of $\A$.

\begin{remark}\label{initial-rate} [Initial accelerated convergence of sketch-and-project] While the worst-case rate of convergence of sketch-and-project methods is controlled by the smallest eigenvalue of the expected sketched projection matrix via \eqref{sp-rate}, it is easy to see that all eigenvalues matter for the convergence. Specifically, written in eigenvector basis, the $l$-th component of the expected distance to the solution shrinks with the rate governed by the $l$-th eigenvalue of the expected projection matrix. Indeed, let ${\bf d}_t := {\bf x}_t - {\bf x}_*$. Then the iterate \eqref{sp-iteration} can be equivalently rewritten as
${\bf d}_{t+1} = (\I - (\S\A)^\dagger\S\A){\bf d}_{t}$ (essentially, the iterates ${\bf d}_t$ solve the system ${\bf A} {\bf d} = {\bf 0}$). Then,
\begin{equation*} 
\E\,\langle {\bf d}_{t+1}, {\bf v}_l\rangle 
 = \langle (\I - \E[\P]){\bf d}_{t}, {\bf v}_l\rangle
 = \langle {\bf d}_{t}, {\bf v}_l\rangle - \langle {\bf d}_{t}, \E[\P]{\bf v}_l\rangle
 = (1 - \lambda_l)\langle {\bf d}_{t}, {\bf v}_l\rangle,
\end{equation*} 
where $(\lambda_l, {\bf v}_l)$ is an eigen-pair of $\E[\P]$.
As it was first noted for the Randomized Kaczmarz method in \cite{steinerberger2021randomized}, this implies typically faster convergence than the one guaranteed by \eqref{sp-rate}.
At the same time, the components corresponding to leading singular vectors will shrink faster with the iteration process, eventually bringing the convergence rate to  \eqref{rate-low}. Theorem~\ref{t:sub-gaussian} shows that the initial accelerated convergence rate of sketch-and-project has components of the size
\begin{equation*}
\lambda_l\geq (1-\epsilon)
      \frac{\gamma_k\sigma_{l}^2(\A)}{\gamma_k\sigma_{l}^2(\A)+1}, \quad \text{ for }\quad l = 1, \ldots, n.
\end{equation*}
One can compare this to the initial speed up of Randomized Kaczmarz (where $k=1$), given by $\lambda_l\geq\sigma^2_l(\A)/\|\A\|_F^2$ \cite{steinerberger2021randomized}. Here, again, our result allows us to quantify the dependence of initial accelerated convergence on the sketch size $k$ thanks to the term $\gamma_k=k/\Err(\A,k-1)\geq k/\|\A\|_F^2$.
\end{remark}

The proof of Theorem~\ref{t:sub-gaussian} is inspired by techniques from asymptotic random matrix theory used to analyze Stieltjes transforms \cite{silverstein1995empirical}. This approach yields $\overbar\P$, which we call the \emph{surrogate expression} for the expected projection matrix, since it can be viewed as an asymptotically exact formula if we let $m,n,r\rightarrow\infty$. As part of this analysis, we must control the concentration of random quadratic forms such as $\s_i^\top\B\s_i$, where $\s_i^\top$ is a row of the sketching matrix $\S$ and $\B$ is some positive semidefinite matrix. For i.i.d. sub-gaussian sketches, such concentration can be ensured, e.g., by the classical Hanson-Wright inequality \cite{rudelson2013hanson}. In our case, this is ensured by the more general condition of Euclidean concentration imposed on the rows of $\S$.

\subsection{Extension to sparse sketches}
\label{s:main-sparse}
In our next result, we show that our full-spectrum analysis of expected sketched projection applies not only to dense sub-gaussian sketches, but also to certain sparse sketching matrices.  We rely on a sparse sketching technique called Leverage Score Sparsified (LESS) embeddings, which was recently introduced by \cite{less-embeddings} (the below definition is based on \cite{gaussianization}). These sketches can be implemented much more efficiently, reducing the per-iteration cost of sketch-and-project, and can be viewed as a bridge between the fast but less stable block sketches and heavy but more stable Gaussian sketches.
\begin{definition}[LESS embeddings \cite{less-embeddings,gaussianization}]\label{d:leverage-score-sparsifier}
Consider a full rank $m\times n$ matrix $\A$ and let
$l_i=\a_i^\top(\A^\top\A)^{-1}\a_i$ for $i=1,...,m$  denote its
leverage scores. Fixing absolute constants $C,K\geq 1$ and letting $\e_j\in\R^m$ be the $j$-th standard basis vector, we define a LESS embedding for $\A$  with sketch size $k$ and $s$ non-zeros per row as a matrix $\S$ with $k$ i.i.d.\ row vectors:
\begin{equation*}\s_i^\top=\frac1{\sqrt{k}}\sum_{j=1}^s\frac{r_{i,j}}{\sqrt{sp_{t_{i,j}}}}\e_{t_{i,j}}^\top\quad\text{for}\quad i=1,...,k,\end{equation*}
where $r_{i,j}$ are i.i.d.\ $K$-sub-gaussian random variables with mean zero and unit variance, and $t_{i,j}$ are sampled i.i.d.~from a probability distribution $p=(p_1,...,p_m)$ such that
$p_i\geq l_i/(Cn)$.
\end{definition}

We show that, when $m\gg n$, then a sparse LESS embedding sketch retains the same spectral properties of the expected sketched projection matrix as a dense sub-gaussian sketch. The proof of the following Theorem~\ref{c:less} is in Section~\ref{s:less-proof}.
    \begin{theorem}\label{c:less}
  Let $\A$ be an $m\times n$ matrix with condition number $\kappa=\sigma_{\max}(\A)/\sigma_{\min}(\A)$ and stable rank $r =  \|\A\|_F^2/\|\A\|^2$. Let $\P$ denote the projection onto the span of $\S\A$, and let $\Err(\A, k)$ denote rank $k$ Randomized SVD error, as per \eqref{randsvd_error}. There are absolute constants $C_1,C_2,C_3 > 0$ such that if $\S$ is a LESS embedding of size $k$ with $s\geq C_1 n\log(\kappa n)$ non-zeros per row, and if $r \ge C_2\max\{k, \log \kappa\}$, then
      \begin{align*}
    (1-\epsilon)\overbar\P \preceq\E[\P]&\preceq (1+\epsilon)\overbar\P,
    \\
    \text{where} \qquad \overbar\P = \gamma_k \A^\top \A (\gamma_k \A^\top \A + \I)^{-1},
         \qquad\gamma_k &= \frac{k}{\Err(\A,k-1)},\qquad\text{and} \qquad\epsilon \leq \frac{C_3}{\sqrt r}.
    \end{align*}
\end{theorem}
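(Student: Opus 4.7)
The plan is to reduce Theorem~\ref{c:less} to Theorem~\ref{t:sub-gaussian} by observing that throughout the proof of the latter, a row $\s_i$ of the sketch interacts with the data only through products of the form $\A^\top \s_i$. Indeed, writing $\P = \Y^\top(\Y\Y^\top)^\dagger \Y$ with $\Y=\S\A$, the analysis rests on quadratic forms $\tau_i^\top \M \tau_i$ where $\tau_i := \A^\top \s_i\in\R^n$ and $\M$ is a resolvent-type matrix in $\R^n$ built from $\A^\top\A$ and its leave-one-out variants. Thus the isotropy, sub-gaussianity, and Euclidean concentration hypotheses of Theorem~\ref{t:sub-gaussian} need only be verified for the projected vector $\tau_i$, not for $\s_i$ on all of $\R^m$. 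The key task is therefore to show that for a LESS row with $s \gtrsim n \log(\kappa n)$, the vector $\tau_i$ is close in distribution to a sub-gaussian vector with covariance $\A^\top \A$, sharply enough that the Hanson--Wright bounds used in the proof of Theorem~\ref{t:sub-gaussian} still apply with the same absolute constants.

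The first step is to verify the moments. After the standard rescaling so that $\E[\s_i \s_i^\top]=\I_m$, we have $\E[\tau_i \tau_i^\top] = \A^\top \A$. Expanding via Definition~\ref{d:leverage-score-sparsifier}, $\tau_i = \sum_{j=1}^s \frac{r_{i,j}}{\sqrt{s\,p_{t_{i,j}}}} \a_{t_{i,j}}$, a sum of $s$ independent vectors whose magnitudes are controlled by the leverage-score lower bound: $p_t \geq l_t/(Cn)$ together with $l_t \geq \|\a_t\|^2/\sigma_{\max}^2(\A)$ gives $\|\a_t\|^2/p_t \leq Cn\|\A\|^2$, so each summand has squared norm $O(n\|\A\|^2/s)$ up to the sub-gaussian weight $r_{i,j}$.

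The second step is a two-level concentration argument. Conditionally on the index sample $\{t_{i,j}\}$, $\tau_i$ is a linear image of the sub-gaussian vector $(r_{i,j})_j$, so Hanson--Wright yields concentration of $\tau_i^\top \M \tau_i$ around $\tr(\M\,\widehat\Sigmab_i)$ with sub-exponential rate in $\|\M\|$ and $\|\widehat\Sigmab_i\|$, where $\widehat\Sigmab_i := \sum_j \frac{1}{s\,p_{t_{i,j}}} \a_{t_{i,j}} \a_{t_{i,j}}^\top$ is the empirical covariance along the sampled coordinates. A matrix-Bernstein bound applied to the independent rank-one summands of $\widehat\Sigmab_i$, using the leverage-score-controlled operator-norm bound above, then shows that $\|\widehat\Sigmab_i - \A^\top \A\|\leq\epsilon\|\A\|^2$ with probability $1 - (\kappa n)^{-c}$ whenever $s = \Omega(n\log(\kappa n))$. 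Composing the two layers gives the desired bound for $\tau_i^\top \M \tau_i$ around the population trace $\tr(\M\,\A^\top \A)$, identical up to constants to what was used in the dense sub-gaussian case.

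With these concentration bounds in hand, the self-recursive symmetrization argument from the proof of Theorem~\ref{t:sub-gaussian} applies verbatim on the high-probability good event to yield the two-sided sandwich around $\overbar\P$; the bad event of probability $(\kappa n)^{-c}$ is handled by truncation, its contribution to $\E[\P]$ being absorbed into the error $\epsilon = O(1/\sqrt r)$. The main obstacle is the interplay between the two sources of randomness in a LESS row, and in particular forcing $\widehat\Sigmab_i$ to concentrate around $\A^\top \A$ without introducing any $\kappa$-dependent loss in the multiplicative bound; this is precisely why $s \geq C_1 n\log(\kappa n)$ appears, the factor $n$ coming from matrix-Bernstein in $\R^n$ and the $\log \kappa$ from a union bound needed to preclude adverse index samples across the recursion.
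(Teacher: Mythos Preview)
Your route is genuinely different from the paper's. Rather than re-running the sub-gaussian analysis with LESS-specific concentration, the paper invokes a black-box total-variation coupling from prior work (Lemma~\ref{l:tv-surrogate}, due to \cite{gaussianization}): a LESS sketch $\S\A$ with $s=O(n\log(nk/\delta))$ non-zeros per row is within $\delta$ in total variation of $\Z\Sigmabt^{1/2}$, where $\Z$ has i.i.d.\ isotropic rows that are \emph{genuinely} sub-gaussian and Euclidean-concentrated with absolute constants, and $(1-\delta)\Sigmab\preceq\Sigmabt\preceq(1+\delta)\Sigmab$. Theorem~\ref{t:sub-gaussian} is then applied as a black box to $\tilde\P=(\Z\Sigmabt^{1/2})^\dagger\Z\Sigmabt^{1/2}$, and the coupling transports the resulting sandwich back to $\E[\P]$ up to a $\delta$-correction. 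The remaining work is a surrogate-matching step: showing $\tilde\gamma_k\Sigmabt(\tilde\gamma_k\Sigmabt+\I)^{-1}$ is close to $\gamma_k\Sigmab(\gamma_k\Sigmab+\I)^{-1}$, done via the implicit analytic formula for $\Err$ (Lemma~\ref{p:implicit}) and a mean-value bound on $f^{-1}$ for $f(\gamma)=\tr\,\gamma\Sigmab(\gamma\Sigmab+\I)^{-1}$.

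Your direct route has the pleasant feature of avoiding the surrogate-matching step entirely, since the $\gamma_k$ that naturally appears is already the LESS one. But the claim that the recursive argument ``applies verbatim on the good event'' is too optimistic in two places. First, the $T_3$ bound in the proof of Theorem~\ref{t:sub-gaussian} uses the \emph{unconditional} $K$-sub-gaussianity of $\s_k$ to control fourth moments such as $\E[(\s_k^\top\A\Sigmab^{-1/2}\u)^4]$; a LESS row does not satisfy this with any absolute constant, so your conditional argument must be threaded through there and the constants re-checked. Second, the additive bound $\|\widehat\Sigmab_i-\Sigmab\|\leq\epsilon\|\A\|^2$ you state does not by itself let you replace $\tr(\M\widehat\Sigmab_i)$ by $\tr(\M\Sigmab)$ when $\M$ is a projection of large rank; what you actually need is the multiplicative guarantee $(1-\epsilon)\Sigmab\preceq\widehat\Sigmab_i\preceq(1+\epsilon)\Sigmab$, which follows from matrix-Bernstein applied to the whitened summands $\frac{1}{sp_t}\Sigmab^{-1/2}\a_t\a_t^\top\Sigmab^{-1/2}$ (each of operator norm $l_t/(sp_t)\leq Cn/s$). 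With these fixes your approach should go through, but it effectively re-derives the content of Lemma~\ref{l:tv-surrogate} inline rather than citing it.
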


\begin{remark}[Generating LESS sketches]\label{r:less}
For a LESS embedding matrix $\S$ of size $k$ with $s$ non-zero entries per row, computing the matrix product $\S\A$ costs $O(kns)$ operations. For $s=1$, this is equivalent to computing a block sketch (that samples $k$ rows of the matrix $\A$), whereas for $s=m$, it effectively corresponds to a dense sub-gaussian sketch, so LESS embeddings can be viewed as interpolating between those two extremes. To obtain a probability distribution $p=(p_1,...,p_m)$ satisfying the condition from Definition \ref{d:leverage-score-sparsifier}, we can use one of two preprocessing steps:
\end{remark} \vspace{-2mm}
\begin{enumerate}
    \item \emph{Approximating leverage scores.}
    This can be done using standard techniques from RandNLA \cite{DMMW12_JMLR,cw-sparse} in time $O(\nnz(\A)\log m + n^3\log n)$, where $\nnz(\A)$ is the number of non-zeros in $\A$. This is less than the cost of solving the system when $m\gg n$. Then, we can define $p$ using the obtained leverage score estimates.
    \item \emph{Randomized preconditioning.} Replacing $\A$ and $\b$ with $\tilde\A=\H\D\A$ and $\tilde\b=\H\D\b$, where $\H$ is an $m\times m$ fast randomized Fourier/Hadamard transform and $\D$ is diagonal with random $\pm1$ entries, does not change the solution $\x_*$, and with high probability, ensures that all the leverage scores of $\tilde \A$ are nearly uniform \cite[Lemma 3.3]{tropp2011improved}. This operation costs $O(mn\log m)$ time and allows us to use a uniform distribution $p$. Similar preconditioning was proposed for Block Kaczmarz methods \cite{needell2014paved}.  Note that in practice (and in our experimental section), it is typical to omit preconditioning and rely on the incoherence of many natural datasets.
\end{enumerate}

We would also like to note that the dichotomy between block sketching (light and more dependent on the matrix structure) and dense sketching (slow and heavy, but more general and theoretically better understood) is notorious across the numerical methods involving sketching, e.g., \cite{rebrova2019sketching, haddock2021greed}. 
The results we show here for LESS sketches are as succinct as for the dense case and theoretically allow for the sparsity $s$ that is almost of the order of $n$ non-zeros per $m$-dimensional row (where $m\gg n$ for a highly overdetermined linear system). Nevertheless, empirical results in Section \ref{s:experiments} suggest that this sparsity level is still quite conservative, and our spectral analysis in practice applies to sketches where $s$ is a small constant, which effectively replicates a block sketching procedure. It is also natural to ask whether our results can be extended to other popular sparse sketching operators, such as the CountSketch \cite{cw-sparse}, or to other structured sketching operators such as subsampled randomized Hadamard/Fourier transforms \cite{ailon2009fast}. We leave this as a direction for future work. 

    \subsection{Extension to stochastic Newton methods}
\label{s:main-newton}
 Our convergence analysis for sketch-and-project can be easily adapted to a number of randomized convex optimization algorithms from the  literature \cite{Gower2019,hanzely2020stochastic,gower2021adaptive,sketched-newton-raphson}. In this setting, the goal is to find $\x_*=\argmin_\x f(\x)$, for some convex function $f:\R^m\rightarrow \R$. Naturally, such a problem can be solved by repeatedly solving the Newton system, i.e., finding $\x_{t+1}$ such that $\nabla^2f(\x_t)(\x_{t+1}-\x_t)=\nabla f(\x_t)$ for $t=0,1,2$, etc. To avoid the cost of solving the system exactly at each iteration, we can sketch it with a $k\times m$ random matrix $\S$. For example, \cite{Gower2019} proposed the following so-called Randomized Subspace Newton:
\begin{equation*}
    \x_{t+1} = \argmin_{\x\in\R^m}\|\x-\x_t\|_{\nabla^2 f(\x_t)}^2
    \qquad\text{such that}\qquad
    \S\,\nabla^2f(\x_t)(\x_t-\x) = \eta_t\cdot\S\,\nabla f(\x_t),
\end{equation*}
where $\eta_t$ is an appropriately chosen step size. The expected convergence rate of this procedure is controlled by a quantity analogous to $\lambda_{\min}(\E[\P])$, except with the input matrix $\At$ replaced by the positive semidefinite square root of the Hessian matrix $\H_t = \nabla^2 f(\x_t)$:
\begin{equation*}
    \E\big[f(\x_{t+1})-f(\x_*)\big] \leq \big(1- c\rho_t)\cdot \E\big[f(\x_t)-f(\x_*)\big]\quad \text{for}\;\rho_t = \lambda_{\min}^+\big(\H_t^{1/2}\E[\S^\top(\S\H_t\S^\top)^\dagger\S]\H_t^{1/2}\big),
\end{equation*}
where $\lambda_{\min}^+$ denotes the smallest positive eigenvalue, whereas $c\in(0,1)$ is a constant that depends on the smoothness and strong convexity of $f$. Also, \cite{hanzely2020stochastic} showed that, for a similar procedure with added cubic regularization, the convergence improves as we approach $\x_*$, so that $c\rightarrow 1$ and we can replace the changing Hessians $\H_t$ in the definition of $\rho_t$ with the fixed Hessian at the optimum, $\nabla^2 f(\x_*)$. Our guarantees from Theorems~\ref{t:gaussian},~\ref{t:sub-gaussian}, and \ref{c:less} can be used to complete the above convergence characterization when $\S$ is Gaussian, sub-gaussian or LESS, by lower bounding the $\rho_t$ quantity, as shown below.

\begin{corollary}\label{c:newton}
    Let $\H$ be an $m\times m$ positive semidefinite matrix with rank $n$, and let $\S$ be a $k\times m$ sketching matrix $\S$. Moreover, suppose that one of the following is true: 
\begin{enumerate}
        \item $\S$ is a Gaussian matrix and $\epsilon = \frac{4k}{n}+\frac{8\log(3n)}{n}$;
        \item $\S$ is a sub-gaussian or LESS embedding matrix and $\epsilon=O(1/\sqrt r + k/n)$, where $r=\tr(\H)/\|\H\|$ satisfies         
        $r\geq C\max\{k,\log\kappa\}$, with $\kappa=\lambda_{\max}(\H)/\lambda_{\min}^+(\H)$.
    \end{enumerate}
    Then, letting $\Err(\A, k)$ be the rank-$k$ Randomized SVD error, as per \eqref{randsvd_error}, we have:
    \begin{equation*}
        \lambda_{\min}^+\big(\H^{1/2}\E[\S^\top(\S\H\S^\top)^\dagger\S]\H^{1/2}\big)\geq (1-\epsilon)\frac{k\,\lambda_{\min}^+(\H)}{\Err(\H^{1/2},k-1)}
        \geq\ \frac{k\,\lambda_{\min}^+(\H)}{\tr(\H)}.
    \end{equation*} 
    \end{corollary}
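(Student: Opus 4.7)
The plan is to reduce the Newton-system bound to an application of Theorem~\ref{t:gaussian}, Theorem~\ref{t:sub-gaussian}, or Theorem~\ref{c:less}, applied to a full-column-rank proxy for $\H^{1/2}$. Let $\H=\U\Lambda\U^\top$ be the compact eigendecomposition, with $\U\in\R^{m\times n}$ having orthonormal columns and $\Lambda\in\R^{n\times n}$ diagonal and positive, and set $\A=\U\Lambda^{1/2}\in\R^{m\times n}$. Then $\A$ has rank $n$, $\A\A^\top=\H$, and $\H^{1/2}=\U\Lambda^{1/2}\U^\top=\A\U^\top$. Under this identification, $\sigma_{\min}^2(\A)=\lambda_{\min}^+(\H)$, $\|\A\|_F^2=\tr(\H)$, the stable rank of $\A$ equals $\tr(\H)/\|\H\|$, and $\log\kappa(\A)=\tfrac12\log\kappa(\H)$, so the hypotheses on $r$ and $\kappa$ used in Theorems~\ref{t:gaussian}--\ref{c:less} are implied by the assumptions stated in the corollary (up to absolute-constant adjustments). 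Note that the leverage scores of $\A$ coincide with those of $\H^{1/2}$, so a LESS embedding for either matrix is the same object.

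Next, I would identify the matrix on the left-hand side with $\E[\P]$ from the preceding theorems. The Moore--Penrose identity $\M^\dagger\M = \M^\top(\M\M^\top)^\dagger\M$ applied to $\M=\S\H^{1/2}$, together with the factoring $\S\H^{1/2}=(\S\A)\U^\top$ and $\U^\top\U=\I_n$, yields the deterministic identity
\begin{equation*}
\H^{1/2}\S^\top(\S\H\S^\top)^\dagger\S\H^{1/2} \;=\; \U\,\P\,\U^\top, \qquad \P:=(\S\A)^\dagger\S\A.
\end{equation*}
Since $\U$ is an isometric embedding, $\lambda_{\min}^+(\U\E[\P]\U^\top)=\lambda_{\min}(\E[\P])$ whenever $\E[\P]$ is positive definite, which is ensured by the lower bounds of the theorems themselves. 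A parallel calculation gives $\Err(\H^{1/2},k-1)=\Err(\A,k-1)$: using the idempotence of the relevant projection (with sketch size $k-1$), both errors simplify to $\tr(\H)-\E\tr(\Lambda\,\P_{k-1})$, where $\P_{k-1}$ is the $n\times n$ sketched projection associated to $\A$.

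With these identifications, applying Theorem~\ref{t:gaussian} in case~1, and Corollary~\ref{cor:proj-control} instantiated via Theorem~\ref{t:sub-gaussian} or Theorem~\ref{c:less} in case~2, yields the first inequality of the corollary; the factor $(1-k/n)$ that Corollary~\ref{cor:proj-control} introduces when passing from $\gamma_k\sigma_{\min}^2(\A)/(\gamma_k\sigma_{\min}^2(\A)+1)$ to $k\sigma_{\min}^2(\A)/\Err(\A,k-1)$ is absorbed into $\epsilon=O(1/\sqrt r+k/n)$. The second inequality is immediate from $\Err(\H^{1/2},k-1)\le\|\H^{1/2}\|_F^2=\tr(\H)$. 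The step demanding the most care is the reduction itself, since $\H^{1/2}$ is a rank-deficient $m\times m$ matrix whereas the theorems are stated for $m\times n$ matrices of full column rank; the argument above shows this gap is purely notational once one restricts to $\mathrm{range}(\U)$ and verifies that $\lambda_{\min}^+$ and $\Err$ both transfer correctly through the isometric embedding $\U$.
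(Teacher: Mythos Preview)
Your proposal is correct and follows essentially the same approach as the paper's own proof: both reduce to the full-column-rank matrix $\A=\U\Lambda^{1/2}$ obtained from the compact eigendecomposition of $\H$, then invoke Theorems~\ref{t:gaussian}, \ref{t:sub-gaussian}, and~\ref{c:less}. The paper's version is stated in a single sentence without spelling out the identities $\H^{1/2}\S^\top(\S\H\S^\top)^\dagger\S\H^{1/2}=\U\P\U^\top$ and $\Err(\H^{1/2},k-1)=\Err(\A,k-1)$ that you carefully verify, but these are exactly the details it implicitly relies on.
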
   
\noindent
The proof of Corollary~\ref{c:newton} is almost immediate. It follows from Theorems \ref{t:gaussian}, \ref{t:sub-gaussian} and \ref{c:less} by setting $\A=\bar\U\bar\D^{1/2}$, where $\K=\U\D\U^\top$ is the eigendecomposition of $\H$, whereas $\bar\U\in \R^{m\times n}$ and $\bar\D\in\R^{n\times n}$ are truncated versions of $\U$ and $\D$ produced by omitting the parts corresponding to the $0$ eigenvalue (if $\K$ is not full-rank).

\section{Proofs of theoretical results}\label{s:all-proofs}

Here, we give proofs for the results stated in Section~\ref{s:main-results}.

  \subsection{Gaussian sketching: proof of Theorem~\ref{t:gaussian}}\label{s:proofs-gaus}
We need two auxiliary results whose proofs are deferred to Section ~\ref{sec:app_a} in the appendix. The first one uses rotational invariance of Gaussian distribution to let us pick a convenient basis for the sketched projection matrix. It follows by an argument similar to \cite[Section 3.1]{cook2011}.
 \begin{lemma}\label{l:tensor-function}
Let $\A$ be an $m\times n$ rank-$n$ matrix, and $\A = \U \D\V^\top$ its compact SVD. Let $\S$ and $\Z$ be $k\times m$ and $k\times n$ Gaussian matrices, with $\P = (\S\A)^\dagger\S\A$ and $\tilde\P=(\Z\D)^\dagger\Z\D$. Then:
\begin{enumerate}
    \item The matrices $\E[\P]$ and $\E[\tilde\P]$ share the same spectrum.
    \item The matrices $\E[\tilde\P]$ and $\D$ share the same (standard) eigenbasis.
    \item The Gaussian Randomized SVD error \eqref{randsvd_error} satisfies $\Err(\A, k) = \Err(\D, k)$.
\end{enumerate}
\end{lemma}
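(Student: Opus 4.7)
The plan is to exploit two features of the Gaussian distribution — rotational invariance and sign-flip invariance — to reduce the analysis of $\E[\P]$ to that of $\E[\tilde\P]$, and then show directly that $\E[\tilde\P]$ is diagonal.

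\textbf{Setup and part 1.} Write $\A = \U\D\V^\top$ with $\V \in \R^{n\times n}$ orthogonal (since $\A$ has rank $n$) and $\U \in \R^{m\times n}$ with orthonormal columns. Extend $\U$ to a full orthogonal matrix $[\U,\U_\perp] \in \R^{m\times m}$. By rotational invariance, $\S[\U,\U_\perp]$ has i.i.d.\ standard Gaussian entries, hence $\S\U$ equals in distribution a $k\times n$ Gaussian matrix, which I identify with $\Z$. Next, I use the identity $(\M\V^\top)^\dagger = \V\M^\dagger$ valid whenever $\V^\top$ has orthonormal rows (i.e., $\V$ is square orthogonal). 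This yields, in distribution,
\begin{equation*}
\P = (\S\A)^\dagger \S\A = (\Z\D\V^\top)^\dagger (\Z\D\V^\top) = \V(\Z\D)^\dagger(\Z\D)\V^\top = \V\tilde\P\V^\top.
\end{equation*}
Taking expectations gives $\E[\P] = \V\,\E[\tilde\P]\,\V^\top$, so $\E[\P]$ and $\E[\tilde\P]$ are orthogonally similar and share the same spectrum, proving part 1. Part 3 falls out of the same computation: since $\I = \V\V^\top$, we have $\A(\I-\P) = \U\D\V^\top \V(\I-\tilde\P)\V^\top = \U\D(\I-\tilde\P)\V^\top$, and orthogonal invariance of the Frobenius norm gives $\|\A(\I-\P)\|_F = \|\D(\I-\tilde\P)\|_F$; taking expectations yields $\Err(\A,k)=\Err(\D,k)$.

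\textbf{Part 2 via sign-flip symmetry.} For any $n\times n$ diagonal sign matrix $\J$ (entries $\pm 1$), the Gaussian matrix $\Z$ satisfies $\Z\J \stackrel{d}{=} \Z$ by sign-flip invariance of its columns. Since $\D$ is diagonal, $\J\D=\D\J$, and since $\J$ is orthogonal, $(\M\J)^\dagger = \J\M^\dagger$. Consequently,
\begin{equation*}
\tilde\P \stackrel{d}{=} (\Z\J\D)^\dagger (\Z\J\D) = (\Z\D\J)^\dagger (\Z\D\J) = \J(\Z\D)^\dagger(\Z\D)\J = \J\tilde\P\J.
\end{equation*}
Taking expectations, $\E[\tilde\P] = \J\,\E[\tilde\P]\,\J$ for \emph{every} sign matrix $\J$, which forces all off-diagonal entries of $\E[\tilde\P]$ to vanish. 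Therefore $\E[\tilde\P]$ is diagonal in the standard basis, the same basis in which $\D$ is diagonal.

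\textbf{Anticipated difficulty.} No step is hard; the only points to handle carefully are (i) justifying $(\M\V^\top)^\dagger = \V\M^\dagger$ and $(\M\J)^\dagger = \J\M^\dagger$ for orthogonal $\V,\J$ — standard pseudoinverse facts — and (ii) making sure the argument in part 2 does not require $\D$ to have distinct singular values, since the sign-flip argument only shows diagonality, not that the eigenvalues are labeled in any particular way. This is fine because ``sharing the standard eigenbasis'' here only requires simultaneous diagonality in the canonical basis, and no claim about simple spectrum is needed.
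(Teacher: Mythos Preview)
Your proof is correct. Parts 1 and 3 proceed essentially as the paper does: both use rotational invariance to replace $\S\U$ by a $k\times n$ Gaussian $\Z$, then the identity $(\M\V^\top)^\dagger=\V\M^\dagger$ for orthogonal $\V$ to obtain $\E[\P]=\V\,\E[\tilde\P]\,\V^\top$, from which the spectral equality and the Frobenius-error identity follow immediately.

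For part 2 you take a genuinely different route. The paper packages the argument by introducing the matrix function $\M(\Sigmab)=\E[\Sigmab^{1/2}\Z^\top(\Z\Sigmab\Z^\top)^\dagger\Z\Sigmab^{1/2}]$, verifies via Gaussian rotational invariance that it is an \emph{isotropic tensor function} (i.e., $\M(\Q\Sigmab\Q^\top)=\Q\M(\Sigmab)\Q^\top$ for all orthogonal $\Q$), and then invokes a classical result from continuum mechanics (Itskov, Theorem 6.2) stating that any isotropic tensor function shares the eigenbasis of its argument. Your sign-flip argument is in effect the specialization of that principle to the case $\Sigmab=\D^2$, using only the diagonal orthogonal matrices $\Q=\J$: since $\J\D^2\J=\D^2$, isotropy gives $\E[\tilde\P]=\J\,\E[\tilde\P]\,\J$ for all sign matrices $\J$, forcing diagonality. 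What you gain is a fully self-contained and elementary proof that avoids the external reference; what the paper's formulation buys is a reusable statement about $\M(\Sigmab)$ for arbitrary $\Sigmab$, not just diagonal inputs, though only the diagonal case is actually needed here.
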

\noindent
The second auxiliary result is our key proposition that connects a form of the expected sketched matrix with the standard Randomized SVD error (compare with \eqref{randsvd_error}):
\begin{proposition}\label{lem:schur-1}
 Let $\A$ be $m\times n$, $\S$ be a $k\times m$ Gaussian matrix, and $\P = (\S\A)^\dagger\S\A.$ Then, for any event $\mathcal{E}$ such that the rows of $\S\A$ are identically distributed conditioned on $\mathcal{E}$, we have
\begin{equation*}\E\left[\tr\,(\S\A\A^\top\S^\top)^{-1}\mid\mathcal{E}\right]\ge  \frac{k\, \mathbb{P}( \EE)}{\Err(\A, k-1)}.\end{equation*}
\end{proposition}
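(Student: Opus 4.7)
The plan has three steps: (i) a Schur-complement identity expressing the diagonal entries of $\M^{-1}$, with $\M=\S\A\A^\top\S^\top$, as reciprocals of orthogonal residuals; (ii) a Cauchy--Schwarz step that converts an expectation of a reciprocal into an ordinary expectation on the event $\EE$; and (iii) a rotational-invariance computation identifying that ordinary expectation with $\Err(\A,k-1)$.

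For step (i), denote by $\S_{-i}$ the matrix $\S$ with row $i$ deleted, and set $\P_{-i}=\A^\top\S_{-i}^\top(\S_{-i}\A\A^\top\S_{-i}^\top)^\dagger\S_{-i}\A$, the projection onto the span of the remaining $k-1$ sketched rows. Partitioning $\M$ to isolate row/column $i$ and applying the block-matrix inversion formula yields
\[
[\M^{-1}]_{ii}\ =\ \frac{1}{\s_i^\top\A(\I-\P_{-i})\A^\top\s_i}\ =\ \frac{1}{\|(\I-\P_{-i})\A^\top\s_i\|^2},
\]
with the denominator positive almost surely (for Gaussian $\S$, whenever $k\leq n$) and the second equality using that $\I-\P_{-i}$ is an orthogonal projector.

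For steps (ii) and (iii), fix $i$ and write $X_i=\|(\I-\P_{-i})\A^\top\s_i\|^2$. Using $\one_\EE^2=\one_\EE$ and the Cauchy--Schwarz inequality,
\[
\mathbb{P}(\EE)^2\ =\ \bigl(\E\,\one_\EE\bigr)^2\ =\ \bigl(\E[(\one_\EE/\sqrt{X_i})\cdot(\one_\EE\sqrt{X_i})]\bigr)^2\ \leq\ \E[\one_\EE/X_i]\cdot\E[\one_\EE X_i]\ \leq\ \E[\one_\EE/X_i]\cdot\E[X_i].
\]
Since $\s_i$ is standard Gaussian on $\R^m$ and independent of $\S_{-i}$, the trace identity for Gaussian quadratic forms gives $\E[X_i\mid\S_{-i}]=\|\A(\I-\P_{-i})\|_F^2$; the unconditional exchangeability of the i.i.d.\ rows of $\S$ then implies $\E[X_i]=\E\,\|\A(\I-\P_{-1})\|_F^2=\Err(\A,k-1)$.

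Summing the resulting bound $\E[\one_\EE/X_i]\geq\mathbb{P}(\EE)^2/\Err(\A,k-1)$ over $i=1,\dots,k$ and dividing by $\mathbb{P}(\EE)$ delivers the claim. The main technical step is the block-matrix identity in (i); strict positivity of the denominator and the trace computation in (iii) are routine for Gaussian $\S$. The hypothesis that the rows of $\S\A$ remain identically distributed conditioned on $\EE$ is automatically satisfied in the applications of interest, where $\EE$ is symmetric under row permutations, so it poses no essential obstacle.
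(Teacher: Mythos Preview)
Your proof is correct and follows essentially the same architecture as the paper: the Schur-complement identity for the diagonal of $(\S\A\A^\top\S^\top)^{-1}$, a convexity-type inequality to pass from $\E[1/X_i\mid\EE]$ to $1/\E[X_i]$ (paying a factor $\PP(\EE)$), and the Gaussian trace computation identifying $\E[X_i]$ with $\Err(\A,k-1)$.

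The one noteworthy variation is in the middle step. The paper first invokes the hypothesis that the rows are identically distributed given $\EE$ to collapse the trace to $k$ copies of a single term, then applies Jensen's inequality $\E[1/X_k\mid\EE]\geq 1/\E[X_k\mid\EE]$ followed by $\E[X_k\mid\EE]\leq\E[X_k]/\PP(\EE)$. You instead apply Cauchy--Schwarz to each row separately, obtaining $\E[\one_\EE/X_i]\geq\PP(\EE)^2/\E[X_i]$, and only then sum. The two inequalities are equivalent in strength, but your version has the small advantage that it never actually uses the conditional-exchangeability hypothesis on $\EE$: unconditional exchangeability of the i.i.d.\ rows already gives $\E[X_i]=\Err(\A,k-1)$ for every $i$, so your bound holds for arbitrary events $\EE$.
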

\begin{proof}[Proof of Proposition~\ref{lem:schur-1}]
 Let $\x_i \in \R^n$ be the $i$-th row of a random matrix $\X := \S\A \in \R^{k \times n}$. By a standard Schur complement result (e.g., \cite{boyd2004convex}) applied to a block decomposition of the matrix $(\S\A\A^\top\S^\top)^{-1} = (\X\X^\top)^{-1} \in \R^{k \times k}$ separating an $(i,i)$-th entry into a $1 \times 1$ block, we have for any $i = 1, \ldots, k$, 
\begin{equation*}
[(\X\X^\top)^{-1}]_{ii} = (\|\x_i\|^2 - \x_i^\top \X_{-i}^\top(\X_{-i}\X_{-i}^\top)^{-1}\X_{-i}\x_i)^{-1}  \!= (\|\x_i\|^2 - \x_i^\top \X_{-i}^\dagger  \X_{-i}\x_i)^{-1} \!= \frac{1}{\x_i^\top \P_{\perp i} \x_i},
\end{equation*}
where $\X_{-i} \in \R^{(k-1) \times n}$ is the matrix obtained by deleting the $i$-th row from $\X$, $\P_{-i} = (\X_{-i})^\dagger \X_{-i}$, and $\Pperp{i}:=\I-\P_{-i}$.
Since for all $i$ both $\x_i$ and $\P_{\perp i}$ have the same distribution conditioned on the event $\mathcal{E}$, we can conclude that 
 \begin{equation*}\E[\tr(\X\X^\top)^{-1}\mid\mathcal{E}] = \sum_{i=1}^k\E[(\X\X^\top)^{-1}\mid\mathcal{E}]_{ii} = \E\left[\frac{k}{\x_k^\top \P_{\perp k} \x_k}\,\Big|\,\mathcal{E}\right].\end{equation*}
 Now, using that $x \mapsto 1/x$ is convex over $\R_+$ together with Jensen's inequality,
 \begin{equation*}
 \E\left[\frac{k}{\x_k^\top \P_{\perp k} \x_k}\,\Big|\, \mathcal{E}\right] \ge  \frac{k}{\E[\x_k^\top\P_{\perp k}\x_k\mid \EE]} \geq \frac{k \mathbb{P}( \EE)}{\E[\x_k^\top\P_{\perp k}\x_k]}
  \end{equation*}
  and the claim is concluded as follows:
\begin{align*}
\E[\x_k^\top\P_{\perp k}\x_k] &= \E\big[\E_{\x_k}( \x_k^\top\P_{\perp k}\x_k\mid\Pperp{k})\big]= \E\big[\tr (\A\Pperp{k}\A^\top)\big] \\
&= \E\big[\tr (\A\Pperp{k}\Pperp{k}^\top\A^\top)\big] = \E\|\A\Pperp{k}\|_F^2.
\end{align*}
Since $\A\Pperp{k} = \A(\I - \P^{(k-1)})$ for a $k-1 \times m$ standard Gaussian matrix $\P^{(k-1)}$, this concludes the proof of Proposition~\ref{lem:schur-1}. 
\end{proof}
\begin{proof}[Proof of Theorem~\ref{t:gaussian}]
First, rotational invariance of the standard Gaussian matrix $\S$ allows us to pass to a very convenient basis, as shown in Lemma~\ref{l:tensor-function}. Therefore, with a slight abuse of notation, we will assume that $\S$ is a $k\times n$ Gaussian matrix, $\A=\D$ is an $n\times n$ diagonal matrix and $\sigma_1\geq \sigma_2\geq ...\geq \sigma_n > 0$ are its diagonal entries (and eigenvalues), $\P = (\S\D)^{\dagger}(\S\D)$. Let $\{\e_1,...,\e_n\}$ be the standard Euclidean eigenbasis of both $\E[\P]$ and $\D$. By Lemma~\ref{l:tensor-function}, we will conclude the statement of the theorem by showing that 
\begin{equation}\label{1}
  \lambda_{\min}(\E[\P]) = \min_i\E[\e_i^\top\P\e_i]\geq (1-\epsilon) \frac{k\lambda^2_{\min}(\D)}{\Err(\D,k-1)}.
\end{equation}

\vspace{0.15cm}
\noindent\textbf{Step 1. Sherman-Morrison lemma and Gaussian quadratic forms.}
By the definition of pseudoinverse,  
  \begin{equation*}
    \e_i^\top\P\e_i = \e_i^\top\D\S^\top(\S\D^2\S^\top)^{-1}\S\D\e_i
=\sigma_i^2\hat\s_i^\top(\S\D^2\S^\top)^{-1}\hat\s_i = \sigma_i^2\hat\s_i^\top\Y\hat\s_i,
  \end{equation*}
 where $\hat\s_i$ denotes the $i$-th column vector of $\S$, and $\Y := (\S\D^2\S^\top)^{-1}$. Then, by a version of the Sherman-Morrison rank-one update formula (Lemma~\ref{lem:rank-one} in the appendix), 
\begin{equation*}
   \e_i^\top\P\e_i =\sigma_i^2\hat\s_i^\top(\S\D^2_{-i}\S^\top+\sigma_i^2\hat\s_i\hat\s_i^\top)^{-1}\hat\s_i= \frac{\sigma_i^2\hat\s_i^\top(\S\D^2_{-i}\S^\top)^{-1}\hat\s_i}
    {1+\sigma_i^2\hat\s_i^\top(\S\D^2_{-i}\S^\top)^{-1}\hat\s_i},
\end{equation*}
  where $\D^2_{-i}$ denote $\D^2$ with $i$th diagonal entry zeroed out. Then, since $\S\D^2_{-i}\S^\top$ is independent from $\hat\s_i$, and $\S\D^2_{-i}\S^\top\preceq \S\D^2\S^\top= \Y^{-1}$, we get:
  \begin{equation}\label{step1}
      \min_i\E[\e_i^\top\P\e_i]
   \geq \min_i\ 
    \E\bigg[\frac{\sigma_i^2\s^\top\Y\s}
    {1+\sigma_i^2\s^\top\Y\s}\bigg]
    =\E\bigg[\frac{\sigma_n^2\s^\top\Y\s}
    {1+\sigma_n^2\s^\top\Y\s}\bigg],
  \end{equation}
 where $\s$ is a new $k$-dimensional Gaussian vector independent of $\S$ (and so, of $\Y$), and we used that a function $f(\sigma) = \sigma^2 x/(1 + \sigma^2 x)$ is increasing for all $x > 0$.
  
  \vspace*{0.2cm}
  
  \noindent\textbf{Step 2. Gaussian Hanson-Wright inequality and removing dependence on $\s$.} Note that $\E\s^\top\Y\s = \tr(\Y)$. By a Gaussian version of the Hanson-Wright inequality (Lemma~\ref{lem:hw_tail} in the appendix), for any $\alpha > 0$, the event 
\begin{equation}\label{gaus_hw}\EE_{\alpha} := \{\s^\top\Y\s \leq w_\alpha(\Y)\} \quad \text{ with }\quad w_\alpha(\Y) :=\tr\,\Y + \sqrt{2\alpha\,\tr\,(\Y^2)}+\alpha\,\|\Y\|
\end{equation}
  has $\P(\EE_{\alpha}|\Y) \ge 1 - e^{-\alpha/2}$ and $\E[\s^\top\Y\s\cdot\one_{\neg\EE_\alpha}|\Y] \le 5 \tr\,(\Y)\cdot e^{-\alpha/2}$. This implies
  \begin{align*}
    \E_\s\Big[ \frac{\sigma_n^2 \s^\top\Y\s}
    {1+\sigma_n^2 \s^\top\Y\s}\mid \Y\Big]
    &\geq  \E_\s\Big[ \frac{\s^\top\Y\s}
    {\sigma_n^{-2}+\s^\top\Y\s} \one_{\EE_\alpha}\mid \Y\Big]\\
    &
    \ge\frac{\tr\,\Y - \E[\s^\top\Y\s \cdot \one_{\neg {\EE_\alpha}}\mid \Y]}
      {\sigma_n^{-2} + w_\alpha(\Y)} 
      \ge (1 - 5 e^{-\alpha/2})\frac{\tr\,\Y }
      {\sigma_n^{-2}+w_\alpha(\Y)}.
  \end{align*}
  Further, let $\mathcal{U}_\alpha:= \{w_\alpha(\Y) \le t_\alpha\}$ for some $t_\alpha$ to be defined later. Note that:

\begin{equation*} 
 \E_\Y\Big[\,\frac{\tr\,\Y}{\sigma_n^{-2}+w_\alpha(\Y)}\Big]  \geq\PP(\mathcal{U}_\alpha)
       \E\Big[\,\frac{\tr\,\Y}{\sigma_n^{-2}+w_\alpha(\Y)}\mid\mathcal{U}_\alpha\Big]
        \geq\PP(\mathcal{U}_\alpha)
       \frac{\E[\tr\,\Y\mid\mathcal{U}_\alpha]}{\sigma_n^{-2}+t_\alpha}.
\end{equation*}
  So, by \eqref{step1} and independence of $\Y$ and $\s$,
    \begin{equation}\label{step2}
      \min_i\E[\e_i^\top\P\e_i] =  \min_i\E_\Y(\E_\s[\e_i^\top\P\e_i| \Y])\ge 
       (1-5e^{-\alpha/2})\PP(\mathcal{U}_\alpha)
       \frac{\E[\tr\,\Y\mid\mathcal{U}_\alpha]}{\sigma_n^{-2}+t_\alpha}.
      \end{equation}
  
  \noindent\textbf{Step 3. Using Proposition \ref{lem:schur-1} and final probability estimates.}  Note that both trace and spectral norm of $\Y$ are invariant with respect to the permutations of the rows of $\S\Sigmab^{1/2}$, so conditioning on the event $\mathcal{U}_\alpha$ keeps the rows identically distributed and we can apply Proposition~\ref{lem:schur-1} to estimate the leftover expectation term. As a result, we have that  
  \begin{equation*}
  \E\big[\,\tr\,\Y\mid \mathcal{U}_\alpha\big]\ge\frac{k\,\PP(\mathcal{U}_\alpha)}{\Err(\D, k-1)}
  \end{equation*} 
  and
    \begin{equation}\label{3}
   \min_i\E[\e_i^\top\P\e_i] \ge  \frac{(1-5e^{-\alpha/2})\mathbb{P}^2(\mathcal{U}_\alpha)}{1+t_\alpha\sigma_n^2} \cdot \frac{k\lambda^2_{\min}(\D)}{\Err(\D, k-1)}.
    \end{equation}
  To conclude the desired lower bound \eqref{1}, we only need to choose $t_\alpha$ to make $\PP(\mathcal{U}_\alpha) = \PP(\{\tr\, \Y+ \sqrt{2\alpha\,\tr\,\Y^2}+\alpha\,\|\Y\| \le t_\alpha\})$ sufficiently large so that 
\begin{equation}\label{4}
\frac{(1-5e^{-\alpha/2})\mathbb{P}^2(\mathcal{U}_\alpha)}{1+t_\alpha\sigma_n^2} \ge 1- \epsilon.
\end{equation}
Direct computation involving a concentration inequality for Gaussian matrices (Lemma~\ref{lem:gaus-compute} in the appendix) shows that choosing 
\begin{equation*}
t_\alpha := \frac{\sigma_n^{-2}\cdot(k+\sqrt{2\alpha k} + \alpha)}{(\sqrt{n} - \sqrt{k} - \sqrt{\alpha})^2} \quad \text{ we have } \quad \epsilon \le  \frac{4k}n + \frac{8\log(3n)}n
\end{equation*}
and thus Theorem~\ref{t:gaussian} follows from $\eqref{3}$ and $\eqref{1}$.
\end{proof}

\subsection{Sub-gaussian sketching: proof of Theorem \ref{t:sub-gaussian}}
\label{s:subgaus-proof}
Let us first set up some notations. Recall that $\P = (\S\A)^\dagger\S\A$ and define its surrogate  $\overbar\P:=\gamma\Sigmab(\gamma\Sigmab+\I)^{-1}$, where $\Sigmab=\A^\top\A$ and $\gamma=\gamma_k:=k/\E\,\|\A\Pperp{k}\|_F^2$. Let $\sigma_1^2\geq \sigma_2^2\geq ...\geq \sigma_n^2$ be the eigenvalues of $\Sigmab$.

Let $\x_i \in \R^n$ be the $i$-th row of $\S\A$, i.e., $\x_i = \A^\top \s_i$ for $\s_i \in \R^m$ the $i$-th row of $\S$. Let us denote a ``truncated" projection with one row less by  $\P_{-i} = (\S_{-i}\A)^\dagger \S_{-i}\A$ for $\S_{-i} \in \R^{(k-1) \times n}$ the matrix obtained by deleting the $i$-th row from $\S$. We will also frequently refer to the residual projections and denote them by $\P_\perp = \I-\P$, $\overbar\P_\perp:=\I-\overbar\P$, and $\Pperp{i}:=\I-\P_{-i}$. The next lemma is proved in the appendixary material: 

\begin{lemma}\label{l:def-delta}
Let $\A$ be an $m\times n$ matrix with $\Sigmab=\A^\top\A$. Let $\S$ be a $k\times m$ random matrix with i.i.d.~rows that satisfy $L$-Euclidean concentration (Definition~\ref{d:euclidean}). Consider the events
\begin{equation}\label{ei}
    \mathcal{E}_i:= \quad\Big\{\x_i^\top\Pperp{i}\x_i\geq \frac12 \tr\Sigmab\Pperp{i}\Big\},  \quad\text{ and }\quad \mathcal{E} :=\bigwedge_{i=1}^k\mathcal{E}_i. 
\end{equation}
Then, defining $r =  \|\A\|_F^2/\|\A\|^2$ as the stable rank of $\A$, we have
\begin{align}
    \text{(almost surely)}\quad\tr\Sigmab\Pperp{i} &\ge \|\Sigmab\|^2  (r-k) \label{trace-key}
    \\[2mm]
    \text{and}\quad\delta := \PP(\neg \EE) &\le 2\exp(-cr/L^2).\label{delta-def}
\end{align}
\end{lemma}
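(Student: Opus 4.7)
The plan is to handle the two claims separately and more or less independently.

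For the almost-sure bound \eqref{trace-key}, I would decompose $\tr\Sigmab\Pperp{i} = \tr\Sigmab - \tr(\Sigmab\P_{-i})$. Since $\P_{-i}$ is an orthogonal projection onto a subspace of dimension at most $k-1$, writing $\P_{-i} = \sum_{j\leq k-1}\u_j\u_j^\top$ in an orthonormal basis of its range gives $\tr(\Sigmab\P_{-i}) = \sum_j \u_j^\top\Sigmab\u_j \leq (k-1)\|\Sigmab\|$. Combined with $\tr\Sigmab = r\|\Sigmab\|$ (which is the definition of the stable rank), this yields $\tr\Sigmab\Pperp{i} \geq (r-k+1)\|\Sigmab\| \geq (r-k)\|\Sigmab\|$, giving \eqref{trace-key} (modulo what appears to be a small typo in the exponent on $\|\Sigmab\|$).

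For the tail bound \eqref{delta-def}, I would union-bound $\PP(\neg\EE) \leq \sum_{i=1}^k \PP(\neg\EE_i)$ and control each term by conditioning on $\S_{-i}$. The key observation is that $\s_i$ is independent of $\S_{-i}$, and therefore of $\Pperp{i}$, so conditional on $\S_{-i}$ the matrix $\M := \Pperp{i}\A^\top$ is deterministic. Using $\Pperp{i}^2 = \Pperp{i}$, rewrite $\x_i^\top\Pperp{i}\x_i = \s_i^\top\A\Pperp{i}\A^\top\s_i = \|\M\s_i\|^2$, and compute $\|\M\|_F^2 = \tr(\Sigmab\Pperp{i})$ together with $\|\M\|^2 \leq \|\A\|^2 = \|\Sigmab\|$. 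By Definition~\ref{d:euclidean}, the random variable $\|\M\s_i\| - \|\M\|_F$ is $L\|\M\|$-sub-gaussian, so standard sub-gaussian tails give
\begin{equation*}
 \PP\!\left(\|\M\s_i\| \leq \tfrac{1}{\sqrt 2}\|\M\|_F \,\Big|\, \S_{-i}\right) \leq 2\exp\!\left(-c\,\|\M\|_F^2/(L^2\|\M\|^2)\right),
\end{equation*}
and since $\|\M\s_i\| \geq \|\M\|_F/\sqrt{2}$ implies $\|\M\s_i\|^2 \geq \tfrac12\|\M\|_F^2 \geq \tfrac12\tr\Sigmab\Pperp{i}$, this is exactly the complement of $\EE_i$. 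The ratio $\|\M\|_F^2/\|\M\|^2$ is bounded below by $r-k$ via part 1, and under the hypothesis $r \geq C_1 k$ of Theorem~\ref{t:sub-gaussian} we have $r-k \geq c'r$, so $\PP(\neg\EE_i\mid\S_{-i}) \leq 2\exp(-c''r/L^2)$ almost surely, hence unconditionally as well. Summing over $i$ and absorbing the factor $k\leq r$ into the constant in the exponent (using that $r \geq C_1 k$ and comparing $\log k$ with $r/L^2$) delivers \eqref{delta-def}.

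The main obstacle will be the careful bookkeeping around conditioning: one must verify that Euclidean concentration, which is a distributional property of a single random vector, can be invoked with a random matrix $\M$ by conditioning on its independent source of randomness, and that the two-sided sub-gaussian deviation bound yields the desired one-sided statement about $\|\M\s_i\|^2$. Everything else is essentially a routine application of sub-gaussian tail estimates and projection rank inequalities.
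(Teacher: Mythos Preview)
Your proposal is correct and follows essentially the same approach as the paper. The only cosmetic difference is in the trace bound: the paper phrases it via Eckart--Young (viewing $\A\P_{-i}$ as a rank-$(k-1)$ approximation of $\A$, so $\tr\Sigmab\Pperp{i}\geq\sum_{j\geq k}\sigma_j^2\geq\|\A\|^2(r-k)$), whereas you use the equivalent direct argument $\tr(\Sigmab\P_{-i})\leq(k-1)\|\Sigmab\|$ from the orthonormal expansion of $\P_{-i}$; both yield the same inequality, and you are right that the exponent on $\|\Sigmab\|$ in \eqref{trace-key} is a typo (the paper's later use is under $\|\Sigmab\|=1$, where it is harmless).
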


\begin{proof}[Proof of Theorem~\ref{t:sub-gaussian}]
Without loss of generality (rescaling both $\A$ and $\gamma$), we can assume:
\begin{equation}\label{norm}
\|\Sigmab\| = 1 \quad \Rightarrow \quad \|\Sigmab^{-1/2}\|= \kappa; \quad \tr(\Sigmab) = r.
\end{equation}

\textbf{1. Symmetrization, initial conditioning and the three part splitting.} 
The claim of Theorem~\ref{t:sub-gaussian} is equivalent to proving that, for some universal constant $C > 0$,
\begin{equation}\label{t2-sym}
  \epsilon := \|\overbar\P^{-1/2}\E[\P]\overbar\P^{-1/2}-\I\| \le CLK^2/\sqrt r.
\end{equation}

Now, let us pass to a version of \eqref{t2-sym} conditioned on the event $\Ec$ defined as in \eqref{ei}. Here and further, $\E_\EE$ denotes corresponding conditional expectation. By a technical Lemma~\ref{lem:cond-symm} proved in the appendix using the law of total probability, 
\begin{equation*}
\|\overbar\P^{-1/2}\E[\P]\overbar\P^{-1/2}-\I\|\leq \|\overbar\P^{-1/2}\E_\EE[\P]\overbar\P^{-1/2}-\I\|
   + \|\overbar\P^{-1}\|\cdot 2\delta
\end{equation*}
where $\delta = \PP(\neg \EE) \le 2\exp(-cr/L^2)$ by \eqref{delta-def} and $\|\overbar\P^{-1}\|\leq 1+r\kappa^2$.  So,
\begin{equation}\label{rho-rhoeps}
\epsilon \le \epsilon_\Ec + 2(1 + r \kappa^2)\delta, \quad \text{ where } \quad \epsilon_\Ec := \|\overbar\P^{-1/2}\E_\EE[\P]\overbar\P^{-1/2}-\I\|.\end{equation}
By the conditions of Theorem~\ref{t:sub-gaussian} we have $r\geq CL^2\log\kappa$ for sufficiently large constant $C>0$, so $\delta r\kappa^2\leq \exp(2\log\kappa+\log r-cr/L^2)\leq\exp(-c'r/L^2).$ So, to get \eqref{t2-sym}, it is enough to prove:
\begin{equation*}\epsilon_\Ec \le CLK^2/\sqrt r.
\end{equation*}

The bound on $\epsilon_\Ec$ relies on the following three-part split, proved in Lemma~\ref{lem:three-part-splitting}, using a Sherman-Morrison type rank-one update formula for the Moore-Penrose pseudoinverse:
\begin{align*}
\epsilon_\Ec \le T_1 + T_2 + T_3, \quad 
\text{where}\quad T_1 &:= \left\|\Sigmab^{-1/2}\E_{\Ec}\big[ \Pperp{k} \x_k\x_k^\top-\Pperp{k}\Sigmab\big]\Sigmab^{-1/2}\right\|, \\
         \quad \quad \quad T_2 &:= \left\|\Sigmab^{-1/2}\E_{\Ec}[\P_{\perp k}\Sigmab - \P_\perp\Sigmab]\Sigmab^{-1/2}\right\|, \\
        T_3 &:= \left\|\E_{\Ec}\bigg[\frac{\E[\xi] - \xi}{\xi} \cdot
        \Sigmab^{-1/2}\Pperp{k} \x_k\x_k^\top \Sigmab^{-1/2}\bigg]\right\| ,
\end{align*}
with $\xi := \x_k^\top\Pperp{k}\x_k$ and $\E[\xi] = \E\tr(\Sigmab \Pperp{k}) = \E\|\A\Pperp{k}\|_F^2 =k/\gamma$ by definition of $\gamma$.

  \vspace*{0.2cm}
  
    \textbf{2. Bounding $T_1$.} First, note that the conditioning is the only reason for $T_1$ to be non-zero. Indeed, since the rows of $\S$ are isotropic and $\x_k = \A^\top\s_k$, we have:
    \begin{equation*}
    \E[\Pperp{k}\E[\x_k\x_k^\top-\Sigmab\mid\Pperp{k}]]=\zero.\end{equation*}
    Carefully isolating the quadratic form $\x_k\x_k^\top = \s_k \A^\top \A \s_k$, its tail is bounded by the Euclidean concentration assumption to have $T_1 \le \kappa^2(\delta + CL^2 r \delta)/(1 - \delta)$.

 Indeed, let $\Xi:=\Pperp{k}(\x_k\x_k^\top-\Sigmab)$, by the law of total probability, 
    \begin{equation*}
   \E_\EE[\Xi] = - \E[\one_{\neg\Ec}\Xi]/ \PP(\Ec) \le \frac{\E[\one_{\neg\Ec}\Xi]}{1 - \delta}.
    \end{equation*}
Using this estimate, along with  $\|\Pperp{k}\|=1$ as a projection matrix and $\|\Sigmab\| = 1$ \eqref{norm},  we have
\begin{align*}
 T_1 &\le
\kappa^2\, \|\E_{\Ec} [\Pperp{k}(\x_k\x_k^\top-\Sigmab)]\|
\leq \frac{\kappa^2}{1-\delta}\|\E[\one_{\neg\Ec}\Pperp{k}(\x_k\x_k^\top-\Sigmab)])\|
\\ &\leq  \frac{\kappa^2}{1-\delta}
\E[\one_{\neg\Ec}\|\Pperp{k}\|(\|\x_k\|^2+\|\Sigmab\|)] \leq\frac{\kappa^2}{1-\delta}(\delta + \E[\one_{\neg\Ec}\|\x_k\|^2]).
\end{align*}
 To analyze the final expectation, we use Euclidean concentration: \begin{equation*}\PP(\|\x_k\|^2>t) = \PP(\|\s_k^\top\A \A^\top \s_k\|^2 >t ) \le 2\exp(-ct/L^2)\end{equation*} for any $t\geq 2 \|\A^\top\|_F^2 = 2\|\A\|_F^2 = 2\tr(\Sigmab)=2r$, so:
\begin{equation*}
\E[\one_{\neg\Ec}\|\x_k\|^2] 
=\int_0^\infty\PP(\one_{\neg\Ec}\|\x_k\|^2>t)dt 
     \leq 2r\delta + \int_{2r}^\infty \PP(\|\x_k\|^2>t)dt 
     \leq 2r\delta + \int_{2r}^\infty 2\exp(-ct/L^2)dt,
\end{equation*}
that is upper bounded by $CL^2r\delta$ with some $C>0$. So, $T_1 \le \kappa^2(\delta + CL^2 r \delta)/(1 - \delta)$.

  \vspace*{0.2cm}
  
 \textbf{3. Bounding $T_2$.} By the rank-one update formula given in Lemma~\ref{lem:rank-one-update}, we have 
\begin{equation}
	\E_{\Ec}[\P_{\perp k} - \P_\perp] = \E_{\Ec}[\P - \P_{-k}]
	= \E_{\Ec}\bigg[ \frac{\Pperp{k} \x_k \x_k^\top \Pperp{k} }{\x_k^\top \Pperp{k} \x_k} \bigg]\preceq \frac{4}{r}\cdot \E_{\Ec}[\Pperp{k}\Sigmab\Pperp{k}],\label{sigmas}
	\end{equation}
	since, given $\Ec$, $\x_k^\top \Pperp{k} \x_k \geq\frac12\tr\Sigmab\Pperp{k}\geq \frac12(r-k)\geq r/4$.
  Next, we use two technical observations proved in Lemma~\ref{lem:subgaus-sp-ext}, which hold almost surely, using that $r\geq 2k$ and $\|\Sigmab\|=1$: 
\begin{equation}\label{third_one}
\Pperp{k}\Sigmab\Pperp{k}\preceq 2(\Sigmab+\P)\qquad\text{and}\qquad \Sigmab^{-1/2}\preceq \sqrt{\gamma}\,\overbar\P^{-1/2}\preceq\overbar\P^{-1/2}.
\end{equation}
Also, we have
  $\E_{\Ec}[\Pperp{k}-\P_\perp]
    \Sigmab\E_{\Ec}[\Pperp{k}-\P_\perp]\preceq\E_{\Ec}[\Pperp{k}-\P_\perp]$. Putting this together:
\begin{align*}
    T_2  
    &= \sqrt{\|\Sigmab^{-1/2}\E_{\Ec}[\Pperp{k}-\P_\perp]
    \Sigmab\E_{\Ec}[\Pperp{k}-\P_\perp]\Sigmab^{-1/2}\|}
\leq \sqrt{\|\Sigmab^{-1/2}\E_{\Ec}[\Pperp{k}-\P_\perp]
    \Sigmab^{-1/2}\|}
    \\
    &\leq \frac 4{\sqrt r}\cdot \sqrt{\|\Sigmab^{-1/2}\E_{\Ec}[\Sigmab+\P]\Sigmab^{-1/2}\|}
 \leq \frac{C}{\sqrt r}\cdot \sqrt{1+\|\overbar\P^{-1/2}\E[\P]\overbar\P^{-1/2}\|} =: \frac{C}{\sqrt r} \cdot\sqrt{1 + \epsilon_1}.
\end{align*}
We will return to bounding $\epsilon_1$ at the end of the proof, as it requires a recursive argument.

\vspace{0.2cm}
  
 \textbf{4. Bounding $T_3$.}
By Cauchy-Schwartz, letting $S^{n-1}$ denote the unit sphere in $\R^n$:
\begin{align}
T_3 &=
\left\|\E_{\Ec}\left(\frac{\E\xi - \xi}{\xi} \cdot
        \Sigmab^{-1/2}\Pperp{k} \x_k\x_k^\top \Sigmab^{-1/2}\right)\right\|  \nonumber\\
    &\leq \sup_{\u,\v\in S^{n-1}} \E_\mathcal{E}\left[\frac{\left|\E\xi - \xi\right|}{\xi}\cdot
    \left|\v^\top\Sigmab^{-1/2}\Pperp{k}\x_k\x_k^\top\Sigmab^{-1/2}\u\right|\right]
    \nonumber\\
        &\leq  \sqrt{\E_\mathcal{E}
        \left[\frac{|\E\xi - \xi|^2}{\xi^2}\right]}\cdot\sup_{\u,\v\in S^{n-1}}\sqrt{\E_\mathcal{E}\big[(\v^\top\Sigmab^{-1/2}\Pperp{k}\x_k\x_k^\top\Sigmab^{-1/2}\u)^2\big]}. \label{eq:cauchy-schwartz}
\end{align}

We estimate these two multiples separately. The first term is small by Euclidean concentration, and the second term is bounded (almost) recursively similar to the bound on $T_2$.

For the first term in \eqref{eq:cauchy-schwartz}, we have $\xi = \x_k^\top\Pperp{k}\x_k\geq \frac12\tr(\Sigmab\Pperp{k})$ due to conditioning, so
\begin{equation*}
\E_\mathcal{E}
        \left[\frac{|\E\xi - \xi|^2}{(\x_k^\top\Pperp{k}\x_k)^2}\right]
       \leq \frac{4}{\PP(\EE)}\cdot
        \E\left[\frac{|\E\xi - \xi|^2}{\tr(\Sigmab\Pperp{k})^2}\right] \leq \frac{4}{1-\delta}\cdot \E\Bigg[\frac{\Var\big[\x_k^\top\Pperp{k}\x_k\mid\P_{-k}\big]}{(\tr\,\Sigmab\P_{\perp k})^2}\Bigg].
\end{equation*}
The Euclidean concentration property of $\s_k$ implies the following bound on the variance of the quadratic form $\x_k^\top\Pperp{k}\x_k=\s_k^\top\A\Pperp{k}\A^\top\s_k$ (Lemma~\ref{lem:concentration-sub-gaussian} in the appendix):
 \begin{equation*}
\Var\big[\x_k^\top\Pperp{k}\x_k\mid\P_{-k}\big]
    \leq CL^2\|\A\Pperp{k}\|^2(L^2\|\A\Pperp{k}\|^2 + \|\A\Pperp{k}\|_F^2)
    \leq CL^4 + CL^2\tr\,\Sigmab\Pperp{k},
    \end{equation*}
so using that $\tr\Sigmab\Pperp{k}\geq r-k$ by \eqref{trace-key}, we obtain:
\begin{equation*}
   \E_\mathcal{E}
        \Bigg[\bigg(\frac{\E \xi- \xi}{\xi}\bigg)^2\Bigg] \leq
        8C\Big(\frac{L^4}{(r-k)^2}+\frac{L^2}{r-k}\Big)\leq \frac{16CL^2}{r-k}. 
\end{equation*}
For the second term in \eqref{eq:cauchy-schwartz}, recall that $\x_k = \A^\top \s_k$, so, using Cauchy-Schwartz, we have 
\begin{align*}
    &\E_\EE\big[(\v^\top\Sigmab^{-1/2}\Pperp{k}\x_k \cdot
    \x_k^\top\Sigmab^{-1/2}\u)^2\big]
    \\
    &\leq \frac{1}{\PP(\Ec)}\E\bigg[\sqrt{\E\big[(\v^\top\Sigmab^{-1/2}\Pperp{k}\A^{\top} \s_k)^4\mid \P_{-k}\big]} \cdot \sqrt{\E\big[(\s_k^\top\A\Sigmab^{-1/2}\u)^4\mid \P_{-k}\big]}
    \bigg]
    \\
    &\leq  
    2\E\bigg[\sqrt{CK^4\|\A\Pperp{k}\Sigmab^{-1/2}\v\|^4} \cdot \sqrt{CK^4\|\A\Sigmab^{-1/2}\u\|^4}\bigg]
  \leq C'K^4\E\Big[\v^\top\Sigmab^{-1/2}\Pperp{k}\Sigmab\Pperp{k}\Sigmab^{-1/2}\v\Big]
    \end{align*}
    where we used $K$-sub-gaussianity of $\s_k$ and the fact that $\|\A\Sigmab^{-1/2}\u\|=\|\u\|=1$.
Thus, combining the above with the bound on $\Pperp{k}\Sigmab\Pperp{k}$ derived earlier in \eqref{third_one},
    \begin{align*}
    \sup_{\u,\v\in S^{n-1}}&\sqrt{\E_\mathcal{E}\big[(\v^\top\Sigmab^{-1/2}\Pperp{k}\x_k\x_k^\top\Sigmab^{-1/2}\u)^2\big]}
    \leq \sup_{\v\in S^{n-1}} \sqrt{CK^4\E\Big[\v^\top\Sigmab^{-1/2}\Pperp{k}\Sigmab
    \Pperp{k}\Sigmab^{-1/2}\v\Big]}
    \\
    &\leq K^2\sqrt{C\big\|\Sigmab^{-1/2}\E\big[\Pperp{k}\Sigmab\Pperp{k}\big]\Sigmab^{-1/2}\big\|}\overset{\eqref{third_one}}{\leq}     K^2\sqrt{2C(1+\|\overbar\P^{-1/2}\E[\P]\overbar\P^{-1/2}\|)}.
\end{align*}
As a result, again adjusting $C$, and using $\epsilon_1 = \|\overbar\P^{-1/2}\E[\P]\overbar\P^{-1/2}\|$, we conclude that:
\begin{equation*} 
 T_3 \leq C \sqrt{\frac{L^2}{r-k}}\cdot  K^2\sqrt{C(1+\|\overbar\P^{-1/2}\E[\P]\overbar\P^{-1/2}\|)}
 \leq \frac{CLK^2}{\sqrt{r}}\cdot\sqrt{1+\epsilon_1}.
\end{equation*} 

\textbf{5. Conclusion via a recursive bound.}  So far, we bounded $\epsilon_\Ec$ by a quantity $\epsilon_1$ that itself depends on $\epsilon$, which seems (almost) circular:
\begin{equation}\label{rhoeps}
\epsilon_\Ec \le \kappa^2\delta(1 + C_1L^2 r) +\frac{C_2}{\sqrt{r}} \sqrt{1 + \epsilon_1} + \frac{C_3LK^2}{\sqrt{r}}\cdot  \sqrt{1 + \epsilon_1}.
\end{equation}
However, together with \eqref{rho-rhoeps}, we have the following relation
\begin{align*}
\epsilon_1 &\le 1 + \epsilon \le 1 + \epsilon_\Ec + 2(1 + r \kappa^2)\delta 
\\&
\le 1  + 2(1 + r \kappa^2)\delta + \kappa^2\delta(1 + CL^2 r) + \frac{CLK^2}{\sqrt{r}}\cdot  \sqrt{1 + \epsilon_1} 
\le \frac{3}{2} + \frac{1}{2}\sqrt{1+ \epsilon_1},
\end{align*}
where we used that $r\geq C'\max\{k,L^2\log \kappa,K^4L^2\}$  for large $C' > 0$ and $\delta=2\exp(-cr/L^2)$.

Solving for $\epsilon_1$, we see that $\epsilon_1 \le 3$. Plugging it back to \eqref{rhoeps}, we get that $\epsilon_\Ec \le C L K^2/\sqrt{r}$, and through \eqref{rho-rhoeps} we have that $\epsilon \le CLK^2/\sqrt{r}$ with a new constant $C $.  This concludes the proof of Theorem~\ref{t:sub-gaussian} by \eqref{t2-sym}.
\end{proof}

\subsection{Sparse sketching: proof of Theorem~\ref{c:less}}\label{s:less-proof}

We will use two results from prior work. The first one shows that a LESS embedding is close in total variation distance to an embedding with strong sub-gaussian concentration. Recall that for random variables $X$ and $Y$, their total variation distance $d_{\textnormal{tv}}(X,Y)$ is the infimum over $\delta>0$ such that there exists a coupling between $X$ and $Y$ such that $\PP(X\neq Y)=\delta$.
\begin{lemma}[\cite{gaussianization}]\label{l:tv-surrogate}
Let $\delta \in (0,1)$. Consider an $m\times n$ matrix $\A$, where $m\geq n$, and let $\S$ be a LESS embedding matrix for $\A$ with $k$ rows having $O(n\log(nk/\delta))$ non-zeros per row. There is a positive semidefinite matrix $\Sigmabt$ and a $k\times n$ random matrix $\Z$ with i.i.d. isotropic rows that satisfy both sub-gaussianity and Euclidean concentration with absolute constants, such that:
    \begin{equation}\label{less-sub}
       d_{\textnormal{tv}}(\S\A,\Z\Sigmabt^{1/2})\leq\delta
        \qquad\text{and}\qquad  (1-\delta)\A^\top\A\preceq\Sigmabt\preceq(1+\delta)\A^\top\A.
    \end{equation}
\end{lemma}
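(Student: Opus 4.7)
The plan is to construct a coupling between $\S\A$ and a sub-gaussian surrogate $\Z\Sigmabt^{1/2}$ that succeeds on a high-probability event. The crucial observation is that, conditional on the support pattern $T=\{t_{i,j}\}$ sampled by the LESS embedding, each row becomes $\s_i^\top\A = \frac{1}{\sqrt{ks}}\sum_{j=1}^s \frac{r_{i,j}}{\sqrt{p_{t_{i,j}}}}\,\a_{t_{i,j}}^\top$, a deterministic linear combination of rows of $\A$ with i.i.d.\ mean-zero unit-variance $K$-sub-gaussian coefficients $r_{i,j}$. Such a vector is automatically $O(1)$-sub-gaussian and $O(1)$-Euclidean-concentrated in the sense of Definitions~\ref{d:subgaus} and~\ref{d:euclidean} (both properties follow from Hanson--Wright applied in the $s$-dimensional coefficient space $\R^s$), with conditional covariance $\hat\Sigmab_i/k$, where $\hat\Sigmab_i := \frac{1}{s}\sum_j \a_{t_{i,j}}\a_{t_{i,j}}^\top/p_{t_{i,j}}$ has marginal expectation $\A^\top\A$.

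The next step is to show that $\hat\Sigmab_i$ concentrates multiplicatively around $\A^\top\A$ via the matrix Chernoff inequality. The leverage-score lower bound $p_l\geq l_l/(Cn)$ with $l_l=\a_l^\top(\A^\top\A)^{-1}\a_l$ guarantees that each rank-one summand $\a_{t_{i,j}}\a_{t_{i,j}}^\top/p_{t_{i,j}}$ has operator norm at most $Cn$ in the $(\A^\top\A)^{-1}$-geometry almost surely. Consequently, once $s\geq C'n\log(nk/\delta)$, matrix Chernoff yields $(1-\delta)\A^\top\A \preceq \hat\Sigmab_i \preceq (1+\delta)\A^\top\A$ with probability at least $1-\delta/k$ per row; a union bound over all $k$ rows gives a good event $G$ with $\PP(G^c)\leq \delta$, which will carry the entire TV budget and is exactly the source of the sandwich bound on $\Sigmabt$ claimed by \eqref{less-sub}.

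Finally, I would build the coupling. Fix $\Sigmabt=\A^\top\A$ and let $\Z$ be a $k\times n$ matrix with i.i.d.\ isotropic rows drawn from a canonical sub-gaussian reference law (e.g., standard Gaussian), which trivially satisfies both concentration assumptions with absolute constants. On $G^c$, couple $\S\A$ and $\Z\Sigmabt^{1/2}$ arbitrarily. On $G$, the conditional distribution of the whitened row $\sqrt{k}\,\s_i^\top\A\,\Sigmabt^{-1/2}$ is $O(1)$-sub-gaussian with covariance $\Sigmabt^{-1/2}\hat\Sigmab_i\,\Sigmabt^{-1/2}$ spectrally within $\delta$ of $\I$, so a row-by-row maximal coupling to the reference isotropic row contributes only $O(\delta)$ additional TV per row. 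Redistributing constants in $s$ absorbs this into a single $\delta$ TV budget, giving $d_{\textnormal{tv}}(\S\A,\Z\Sigmabt^{1/2})\leq\delta$.

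The main obstacle is the final coupling step on $G$: the rows of $\Z$ must be \emph{exactly} i.i.d.\ isotropic with absolute-constant sub-gaussian and Euclidean-concentration parameters, whereas the natural whitening of $\S\A$ depends on the random row-specific $\hat\Sigmab_i$. Executing this carefully — probably by combining the spectral concentration of $\hat\Sigmab_i$ with a row-wise maximal coupling and a small amount of auxiliary randomness that smoothes out the residual anisotropy — is where the bulk of the technical work lies, and it dictates the particular $n\log(nk/\delta)$ sparsity rate required for both the Chernoff step and the TV matching step to fit within a total budget of $\delta$.
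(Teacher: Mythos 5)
First, note that the paper does not prove Lemma~\ref{l:tv-surrogate} at all: it is imported verbatim from \cite{gaussianization} and used as a black box in the proof of Theorem~\ref{c:less}. So your proposal is being measured against the proof in that reference, whose entire content is the step you defer to the end.

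Your first two steps are fine: conditioning on the support pattern does make each row a fixed linear image of an i.i.d.\ sub-gaussian coefficient vector (hence sub-gaussian and Euclidean-concentrated with conditional covariance $\widehat{\Sigmab}_i/k$), and matrix Chernoff with the leverage-score lower bound $p_l\geq l_l/(Cn)$ does give $(1\pm\delta)\A^\top\A$ spectral concentration of $\widehat{\Sigmab}_i$ once $s\gtrsim n\log(nk/\delta)$. The genuine gap is the final coupling step, and it is not merely a technicality to be "executed carefully": the claim that a row-by-row maximal coupling costs only $O(\delta)$ in total variation because the whitened conditional covariance is spectrally within $\delta$ of $\I$ is false. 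Total variation distance is not controlled by closeness of second moments. A $1$-sparse LESS row (which your sparsity regime does not exclude conditionally, and which in general has an atomic or otherwise highly non-smooth law) and an isotropic Gaussian row with the identical covariance are at TV distance essentially $1$. Nothing in your argument uses the only structural fact that can save this, namely that each row $\s_i^\top\A$ is a normalized sum of $s$ \emph{i.i.d.}\ random vectors, so that for large $s$ a central-limit-theorem-in-total-variation (Berry--Esseen with smoothing, which is the actual machinery of \cite{gaussianization}) forces the row law toward a Gaussian reference. That CLT-in-TV argument is what dictates the required sparsity $s=O(n\log(nk/\delta))$ and why $\Sigmabt$ is only guaranteed to be a $(1\pm\delta)$-approximation of $\A^\top\A$ rather than equal to it; without it the lemma does not follow. (A secondary accounting issue: you assign the whole $\delta$ budget to the Chernoff event and then separately incur "$O(\delta)$ per row" over $k$ rows, which would total $O(k\delta)$; the per-row TV cost must be $\delta/k$.)
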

The second result, which is an immediate corollary of \cite{precise-expressions}, gives a precise analytic expression for the expected Randomized SVD error in terms of the singular values of matrix $\A$.
\begin{lemma}[\cite{precise-expressions}]\label{p:implicit}
Under the assumptions of Theorem \ref{t:sub-gaussian}, the expected error of Randomized SVD can be approximated by the following implicit analytic expression: 
\begin{equation*}
    (1-\epsilon)\cdot k/\bar\gamma_k\leq
    \Err(\A,k-1)\leq(1+\epsilon)\cdot k/\bar\gamma_k\quad\text{for}\quad\epsilon=O(1/\sqrt r),
\end{equation*} 
where $\bar\gamma_k = f^{-1}(k)$, i.e., it is the function inverse of $f$ at $k$, for $f(\gamma)=\tr\,\gamma\Sigmab(\gamma\Sigmab+\I)^{-1}$.
\end{lemma}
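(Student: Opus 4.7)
The plan is to derive the implicit expression for $\Err(\A,k-1)$ directly from Theorem~\ref{t:sub-gaussian} by passing the Loewner bound on $\E[\P]$ through the trace functional, and then inverting the resulting scalar fixed-point equation for $\gamma_k = k/\Err(\A,k-1)$. The intuition is that $\gamma_k$ and $\bar\gamma_k$ both parameterize surrogates of the form $\gamma\Sigmab(\gamma\Sigmab+\I)^{-1}$: $\bar\gamma_k$ is defined so that the trace identity $f(\bar\gamma_k)=k$ holds exactly, while $\gamma_k$ comes from Theorem~\ref{t:sub-gaussian}, so the two must agree up to an $O(1/\sqrt r)$ error forced by $\E\tr(\P)=k$.

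First I would invoke Theorem~\ref{t:sub-gaussian} to obtain $(1-\epsilon)\overbar\P \preceq \E[\P]\preceq (1+\epsilon)\overbar\P$ with $\overbar\P=\gamma_k\Sigmab(\gamma_k\Sigmab+\I)^{-1}$, $\gamma_k=k/\Err(\A,k-1)$, and $\epsilon=O(1/\sqrt r)$. Under the theorem's hypotheses the matrix $\S\A$ has rank exactly $k$ almost surely, so $\tr(\P)=k$ a.s.\ and $\E\tr(\P)=k$. Taking the trace of the Loewner sandwich yields the scalar bound
\begin{equation*}
(1-\epsilon)\,f(\gamma_k)\ \leq\ k\ \leq\ (1+\epsilon)\,f(\gamma_k), \qquad f(\gamma):=\sum_i\frac{\gamma\sigma_i^2}{1+\gamma\sigma_i^2},
\end{equation*}
so that $f(\gamma_k)=(1\pm O(\epsilon))\,k=(1\pm O(\epsilon))\,f(\bar\gamma_k)$.

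Next I would convert this scalar approximation into a multiplicative comparison $\gamma_k=(1+O(\epsilon))\,\bar\gamma_k$, which at once implies the claim $\Err(\A,k-1)=k/\gamma_k=(1\pm O(\epsilon))\,k/\bar\gamma_k$. Since $f$ is smooth, strictly monotone increasing and concave with $f(0)=0$, a first-order expansion around $\bar\gamma_k$ gives $|\gamma_k-\bar\gamma_k|\lesssim \epsilon\,k/[\bar\gamma_k f'(\bar\gamma_k)]$. Setting $a_i:=\bar\gamma_k\sigma_i^2/(1+\bar\gamma_k\sigma_i^2)\in(0,1)$, one computes $\sum_i a_i=f(\bar\gamma_k)=k$ and $\bar\gamma_k f'(\bar\gamma_k)=\sum_i a_i(1-a_i)\geq k(1-a_1)$.

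The main obstacle is establishing a lower bound of the form $\bar\gamma_k f'(\bar\gamma_k)\gtrsim k$, needed for the inversion to be stable: concavity and monotonicity of $f$ alone are insufficient, because $f$ could be nearly flat in a regime where too many $a_i$ cluster near $1$. To rule this out I would use the identity $f(\bar\gamma_k)=k$ together with the bound $\bar\gamma_k\sigma_i^2/(1+\bar\gamma_k\sigma_i^2)\geq \bar\gamma_k\sigma_i^2/(1+\bar\gamma_k\sigma_1^2)$ summed over $i$, which rearranges to $\bar\gamma_k\sigma_1^2\leq k/(r-k)$. Invoking the stable-rank hypothesis $r\geq C_1 k$ from Theorem~\ref{t:sub-gaussian} with $C_1$ sufficiently large, this forces $a_1$ (and hence all $a_i$) safely below $1$, giving $\bar\gamma_k f'(\bar\gamma_k)\geq c\,k$ for an absolute constant $c>0$. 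Combined with the first-order expansion, this yields $\gamma_k=(1+O(\epsilon))\,\bar\gamma_k$ and completes the proof.
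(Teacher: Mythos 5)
Your proposal is correct, but it takes a genuinely different route from the paper: the paper does not prove Lemma~\ref{p:implicit} at all, citing it as an immediate corollary of the external reference \cite{precise-expressions} (whose own analysis of the expected residual projection yields the implicit expression directly). You instead derive it as a self-consistency consequence of Theorem~\ref{t:sub-gaussian}: taking traces in the Loewner sandwich and using $\E\tr(\P)=k$ forces $f(\gamma_k)=(1\pm O(\epsilon))k=(1\pm O(\epsilon))f(\bar\gamma_k)$, and the stability of $f^{-1}$ converts this into $\gamma_k=(1+O(\epsilon))\bar\gamma_k$. This is not circular, since the paper's proof of Theorem~\ref{t:sub-gaussian} nowhere uses Lemma~\ref{p:implicit} (the lemma enters only in the proof of Theorem~\ref{c:less}). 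Your inversion argument --- bounding $\bar\gamma_k\sigma_1^2\le k/(r-k)$ from $f(\bar\gamma_k)=k$ and deducing $f'\gtrsim r\sigma_1^2$ on the relevant interval --- is essentially the same computation the paper carries out in Step~2 of the proof of Theorem~\ref{c:less} when comparing $\tilde\gamma_k$ and $\bar\gamma_k$, so all the technical machinery is already present. What your route buys is a self-contained paper (no dependence on the precise form of the result in \cite{precise-expressions}); what it costs is that the lemma then inherits all hypotheses and constants of Theorem~\ref{t:sub-gaussian}, whereas the cited result may hold under weaker conditions.

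One small point to tighten: the claim that $\S\A$ has rank exactly $k$ \emph{almost surely} is false for discrete sub-gaussian sketches (e.g.\ Rademacher), which the theorem explicitly permits. What you actually need is $\E\tr(\P)\ge k(1-\delta)$ with $\delta$ negligible compared to $\epsilon$; this follows because on the event $\EE$ of Lemma~\ref{l:def-delta} each $\x_i^\top\Pperp{i}\x_i>0$, so $\S\A$ has full row rank, and $\PP(\neg\EE)\le 2\exp(-cr/L^2)\ll 1/\sqrt r$. With that substitution the argument goes through, and you should also note (as you implicitly do via MVT) that the lower bound on $f'$ must hold on the whole interval between $\gamma_k$ and $\bar\gamma_k$, which follows since both lie below $f^{-1}(2k)$ and $f'$ is decreasing.
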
  

We now proceed with the proof of Theorem~\ref{c:less}, by comparing the expected projection based on $\S\A$ with an analogous expected projection matrix defined by $\Z\Sigmabt^{1/2}$ (Lemma~\ref{l:tv-surrogate}).

\begin{proof}[Proof of Theorem~\ref{c:less}]
We substitute our LESS-embedding projection  $\P$ with a projection $\tilde\P$ defined via Lemma~\ref{l:tv-surrogate} as: \begin{equation*}\tilde\P:=(\Z\Sigmabt^{1/2})^\dagger\Z\Sigmabt^{1/2}.\end{equation*} First, note that $\tilde\P$ satisfies assumptions of Theorem \ref{t:sub-gaussian}: indeed, $\Z$ satisfies the assumptions on the sketching matrix by construction, and both the condition number $\kappa$ and the stable rank $r$ of $\Sigmabt^{1/2}$ are within a factor of $(1\pm\delta)$ of the corresponding quantities for $\A$ by \eqref{less-sub}. So,  they satisfy the required mutual relations, potentially with slightly different absolute constants. Thus, combining the estimates from Theorem~\ref{t:sub-gaussian} and  Lemma~\ref{p:implicit}, we have 
\begin{equation}\label{less-approx-thm2}
    (1-\tilde\epsilon)\tilde\gamma_k\Sigmabt(\tilde\gamma_k\Sigmabt+\I)^{-1}\preceq    \E[\tilde\P]\preceq(1+\tilde\epsilon)\tilde\gamma_k\Sigmabt(\tilde\gamma_k\Sigmabt+\I)^{-1}\quad\text{for}\quad\tilde\epsilon=O(1/\sqrt r),
\end{equation}
where $\tilde\gamma_k =  \tilde f^{-1}(k)$, with $\tilde f(\gamma)=\tr\,\gamma\Sigmabt(\gamma\Sigmabt+\I)^{-1}$. Our goal is to show that $\E[\P]$ and its respective surrogate expression  $\gamma_k\Sigmab(\gamma_k\Sigmab+\I)^{-1}$ satisfy the same mutual relationship \eqref{less-approx-thm2} with $\epsilon \leq C \tilde\epsilon$. We will do this in two steps: first, we estimate $\E[\P]$ via new surrogates coming from \eqref{less-approx-thm2}, and then, we show that the two surrogate expressions are close. 

\vspace{0.15cm}

\textbf{Step 1.}  Since $\S\A$ and $\Z\Sigmabt^{1/2}$ are within $\delta$ total variation distance, we can couple these two random matrices so that the event $\Ec := \{\P = \tilde \P\}$ holds with probability $1-\delta$.  We observe that since $\tilde\gamma_k\geq k/\tr(\Sigmabt)\geq k/n$, we have $\lambda_{\min}(\tilde\gamma_k\Sigmabt(\tilde\gamma_k\Sigmabt+\I)^{-1})=\lambda_{\max}^{-1}(\I + \tilde\gamma_k^{-1}\Sigmabt^{-1}) \geq k/(2n\kappa^2)$. As a result, letting $\delta\leq \epsilon/(n\kappa^2)$, using the law of total probability and that $\zero\preceq\P\preceq\I$,
\begin{align*}
    \E[\P] &= \PP(\Ec)\E[\P\mid\Ec] + \PP(\neg\Ec)\E[\P\mid\neg\Ec]
\succeq \PP(\Ec)\E[\tilde\P\mid\Ec]
    = \E[\tilde\P] - \PP(\neg\Ec)\E[\tilde\P\mid\neg\Ec]  
    \\
&\succeq\E[\tilde\P] - \delta\I
  \succeq\E[\tilde\P]-\frac{\epsilon}{n\kappa^2}\cdot 2n\kappa^2\,\tilde\gamma_k\Sigmabt(\tilde\gamma_k\Sigmabt+\I)^{-1}
\succeq (1-3\epsilon)\tilde\gamma_k\Sigmabt(\tilde\gamma_k\Sigmabt+\I)^{-1},
   \end{align*}
    and on the other side, analogously we obtain:
\begin{equation*} 
    \E[\P] = \PP(\Ec)\E[\P\mid\Ec] + \PP(\neg\Ec)\E[\P\mid\neg\Ec]
    \preceq \PP(\Ec)\E[\tilde\P\mid\Ec] + \delta\I
    \preceq (1+3\epsilon)\tilde\gamma_k\Sigmabt(\tilde\gamma_k\Sigmabt+\I)^{-1}.
\end{equation*} 
This way, we estimated $\E[\P]$ via the approximated surrogate $\tilde\gamma_k\Sigmabt(\tilde\gamma_k\Sigmabt+\I)^{-1}$.

\vspace{0.15cm}

\textbf{Step 2.} It remains to show that $\tilde\gamma_k\Sigmabt(\tilde\gamma_k\Sigmabt+\I)^{-1}$ is close to  $\bar\gamma_k\Sigmab(\bar\gamma_k\Sigmab+\I)^{-1}$, for $\bar\gamma_k=f^{-1}(k)$, with $f(\gamma)=\tr \gamma\Sigmab(\gamma\Sigmab+\I)^{-1}$. After that, we can conclude Theorem~\ref{c:less} with $\gamma_k = k/\Err(\A,k-1)$ by using Lemma~\ref{p:implicit} again.

Further, it is enough to compare $\tilde\gamma_k$ and $\bar\gamma_k$ since we know that $\Sigmabt$ is a $(1\pm\delta)$-approximation of $\Sigmab$ \eqref{less-sub}. Another consequence of $\Sigmabt$ and $\Sigmab$ being $\delta$-close is that $\tilde\gamma_k = f^{-1}(\tilde k)$ for some $\tilde k$ being a $(1\pm\delta)$-approximation of $k$. Thus, it suffices to show that $f^{-1}(\tilde k)\approx f^{-1}(k)$ for
\begin{equation*}f(\gamma)=\tr\, \gamma\Sigmab(\gamma\Sigmab+\I)^{-1} \quad\text{ and any } \tilde k: \; |k - \tilde k| \le \delta k. \end{equation*}

We show this by bounding the derivative of $f^{-1}$ at any point in $[0,r/2]$ (which includes both $k$ and $\tilde k$), and then relying on the mean value theorem. Let $\sigma_1,\sigma_2,...$ be the decreasing singular values of $\A$, and assume without loss of generality that $\sigma_1=1$. Observe that 
\begin{equation*}
f(\gamma) 
    = \sum_i\frac{\gamma\sigma_i^2}{\gamma\sigma_i^2+1} \ge \frac{\gamma \sum_i\sigma_i^2}{\gamma\sigma_1^2+1} = \frac{\gamma r}{\gamma + 1},
\end{equation*}
    so $f^{-1}(x) = \gamma \le \frac{x}{r -x} \le 1$ for $x \le r/2$.
    This holds for $k$, $\tilde k$ and any value in between, so 
\begin{equation*}
    f'(\gamma) 
    = \frac{\text{d}}{\text{d}\gamma}
    \Big(\sum_i\frac{\gamma\sigma_i^2}{\gamma\sigma_i^2+1}\Big)
    =\sum_i \frac{\sigma_i^2}{(\gamma\sigma_i^2+1)^2}\geq \frac14\sum_i\sigma_i^2=\frac{r}{4}
\;
\text{ and }
\;
    \frac{\text{d}f^{-1}(x)}{\text{d}x} = \frac1{f'(f^{-1}(x))}\leq \frac{4}{r}.
\end{equation*} 
Now, by the mean value theorem:
\begin{equation*}
    |\tilde\gamma_k - \bar\gamma_k|=|f^{-1}(\tilde k) - f^{-1}(k)| \leq \frac 4r\cdot|k-\tilde k|\leq 4\delta\cdot\frac{k}{r}\leq 4\delta\cdot\bar\gamma_k,
\end{equation*}
where the last step follows because 
\begin{equation*}k=\sum_i\frac{\bar\gamma_k\sigma_i^2}{\bar\gamma_k\sigma_i^2+1}\leq \bar\gamma_k\sum_i\sigma_i^2=\bar\gamma_k r.\end{equation*}
Thus, $\tilde\gamma_k$ and $\bar\gamma_k$ are within $1+O(\delta)$ factor to each other, and so the two surrogate expressions are similarly close. We conclude by using $\epsilon = 2 n \kappa^2\delta$ and $O(n \log(n k /\delta))$ non-zeros per row.
\end{proof}

\section{Experiments}\label{s:experiments}
In this section, our goal is to provide additional support for our theoretical bounds and to demonstrate how the new bounds can inform us about the optimal sketch sizes and sparsity levels when using sketch-and-project methods.
In Sections \ref{subsec:experiments-size} and \ref{subsec:experiments-bounds}, we test the Block Gaussian Kaczmarz method (assuming $\B= \I$ throughout) with various sketch sizes $k$. Then, in Section \ref{subsec:experiments-sparse}, we replace the Gaussian sketching matrix with LESS-based sparse sketching matrices, varying their sparsity. We consider artificial linear problems, as well as matrices coming from real-world datasets. 

The artificial matrices have dimensions $5000 \times 150$, and are generated to have specific spectral profiles. Namely, models 'lin.01', 'lin.025' and 'lin.035' have singular values $\sigma_i = 6.8-l\cdot i$ with $l = 0.01, 0.025$, and $0.035$ (which results in $\sigma_{\min}$ being $5.3$, $3.1$ and $1.6$, respectively); Models 'poly1' and 'poly1.5' are generated so that $\sigma_i = 6.8\,i^{-l}$, for $l = 1$ and $l = 1.5$ respectively. Finally, in 'step20' model, $20$ top singular values are as in 'lin.01' and the rest are as in 'poly1', and 'step37' is defined analogously.
We also consider three other matrices: a Gaussian matrix with normalized rows ('gaus'); a sub-matrix of the USPS dataset \cite{chang2011libsvm} ('usps'); and a sub-matrix of the w8a-X dataset \cite{chang2011libsvm} ('w8a'). Further details on data generation are in Section SM4 of the appendix.

\begin{figure}
\centering
\includegraphics[width=\linewidth]{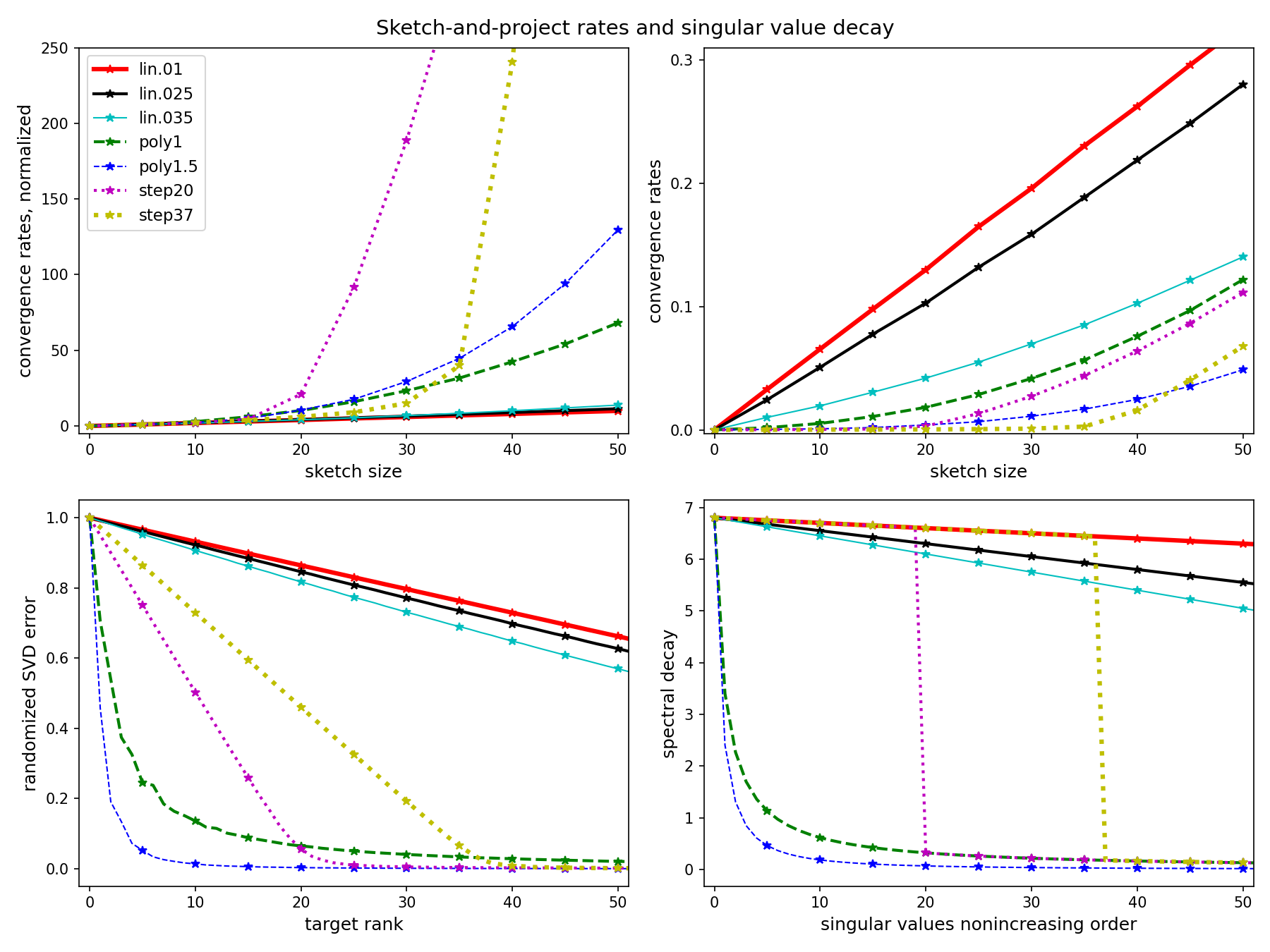}
\vspace{-5mm}
\caption{Two figures on the top show the estimated convergence rates vs sketch size (larger is better),
computed as the mean of 
$1 - \frac{\|\x_{t} - \x_*\|^2}{\|\x_{t-1} - \x_*\|^2} \sim \Rate(\A,k)$
 over the last $50$ iterations in $100$ runs, to show the ``limiting" rate after stabilization. We run the method for $1000$ iterations or until $\|\x_{t} - \x_*\| < 10^{-5}$. We estimate the expected rate for the sketch sizes $k = 5j$ for $j =1, \ldots, 10$. Top right plot shows that linear decays result in faster convergence in general due to having larger smallest singular value. However, the normalized version of the same plot on the left demonstrates that the relative speed-up with increased sketch size is quicker for polynomial spectral decays, shadowing the claim of Corollary~\ref{cor:poly-decays} (polynomial rate growth on polynomial spectra) and Corollary~\ref{cor:proj-control} (inverse to the profiles of Randomized SVD error). Normalized Randomized SVD error is computed as $\|\A(\I - \P)\|_F/\|\A\|_F$ (top right, smaller~is~better).}
\label{fig-decays}
\end{figure}
 \begin{figure}
\centering
\includegraphics[width=\linewidth]{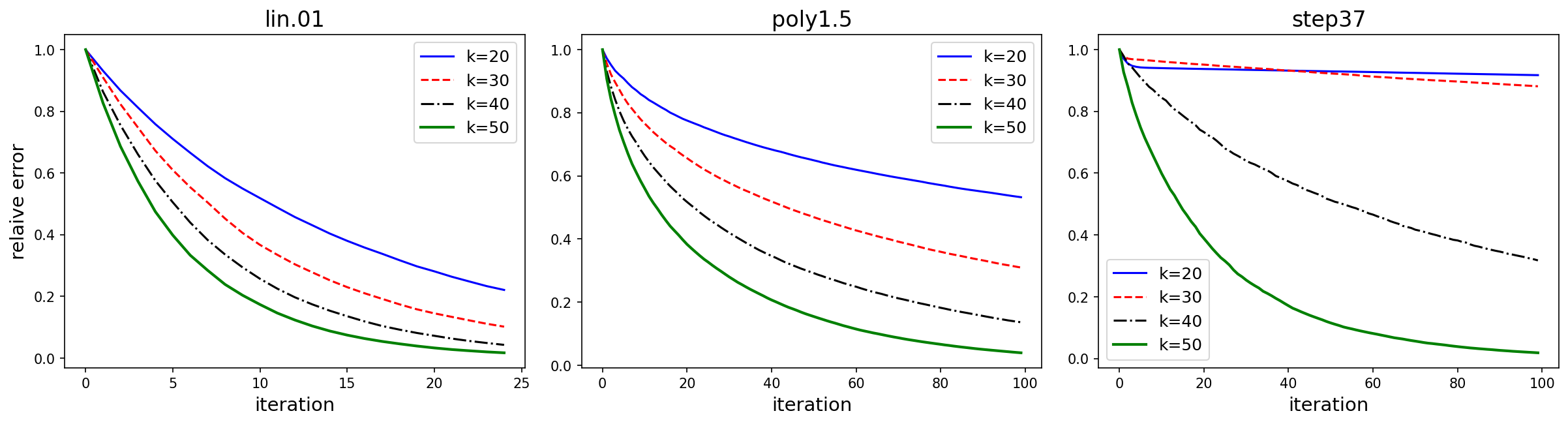}
\vspace{-2mm}
\caption{Per iteration convergence for various artificial models and various sketch sizes $k$. We plot the relative error $\|\x_t - \x_*\|/\|\x_*\|$ after $t$ iterations, with $\x_0$ being the all zeros vector. Polynomial model results in substantial improvements as sketch size grows, and the linear model convergence improves less with the growing sketch size. For the step singular values decay, we see a threshold: having the sketch size bigger than the number of ''large" singular values (approximate rank) drives an effective convergence~rate.}
\label{fig-real-world-1}
\end{figure}

\subsection{Sketch size and convergence}
\label{subsec:experiments-size}

In Figure~\ref{fig-decays}, we demonstrate the scaling of the estimated convergence rate with the sketch size $k$. In particular, we show that linear spectral decays result in linear per-iteration speed up with growing sketch size, whereas quicker (polynomial) spectral decays result in accelerated rate improvement with sketch size. Moreover, we demonstrate the similarity between the error of Randomized SVD \eqref{randsvd_error} and the convergence speed-up of sketch-and-project: when the spectral decay has an abrupt change at $i$-th ordered singular value, then  about $i$ components are needed for a small error in Randomized SVD, and similarly, sketch sizes $k\ge i$ drive efficient convergence of the iterative solver. 

In Figure~\ref{fig-decays} (top right), we see that linear models have higher rates and they are generally the fastest to converge. This is not surprising as they are the best conditioned (having smallest singular values above $1.5$). To emphasize the relationship between the rate and the sketch size, in Figure~\ref{fig-decays} (top left), we normalize the rates by the first non-trivial point, the rate at $k = 5$. We can see that the acceleration of the rate for linear models is the slowest, and that 'step' models spike with the behavior roughly reverse to their Randomized SVD error, plotted in  Figure~\ref{fig-decays} (bottom left). In Figure~\ref{fig-real-world-1}, we demonstrated that faster polynomial spectral decay means better improvement of the convergence speed with growing $k$.

\subsection{Sharpness of our theoretical bounds}
\label{subsec:experiments-bounds}
In Figure~\ref{fig-surrogates}, we compare the estimated convergence rate (same as in Figure~\ref{fig-decays}) and its theoretical lower bounds: (a) 's-min': the worst-case expected convergence rate $\lambda_{\min}(\E[\P]) = \Rate_\B(\A,k)$, and (b) 'surrogate': our new surrogate estimate for $\lambda_{\min}(\E[\P])$, as per Corollary~\ref{cor:proj-control}, defined via the Randomized SVD error: 
\begin{equation}\label{surrogate-bound}\frac{k\sigma_{\min}^2(\A)}{k\sigma_{\min}^2(\A)+\E\,\|\A(\I-\P^{(k-1)})\|_F^2},
\end{equation}
where $\P^{(k-1)}=(\S^{(k-1)}\A)^{\dagger}\S^{(k-1)}\A$. We see that the two theoretical bounds 's-min' and 'surrogate' are very close to each other (in most cases, they are difficult to tell apart), which confirms that $\epsilon$ from Theorem~\ref{t:sub-gaussian} is very close to $0$ in practice. Furthermore, both lower bounds effectively mimic the shape of the empirical convergence rate curve, as we vary the sketch size $k$. We note that the remaining gap between the observed rate and the worst-case expected convergence rate is due to the fact that real iterations $\x_k$ will not all be in the worst-case position $\x_k = \x_* + \alpha\v$, where $\v$ is the smallest singular vector of the data.

\begin{figure}
\centering
\includegraphics[width=\linewidth]{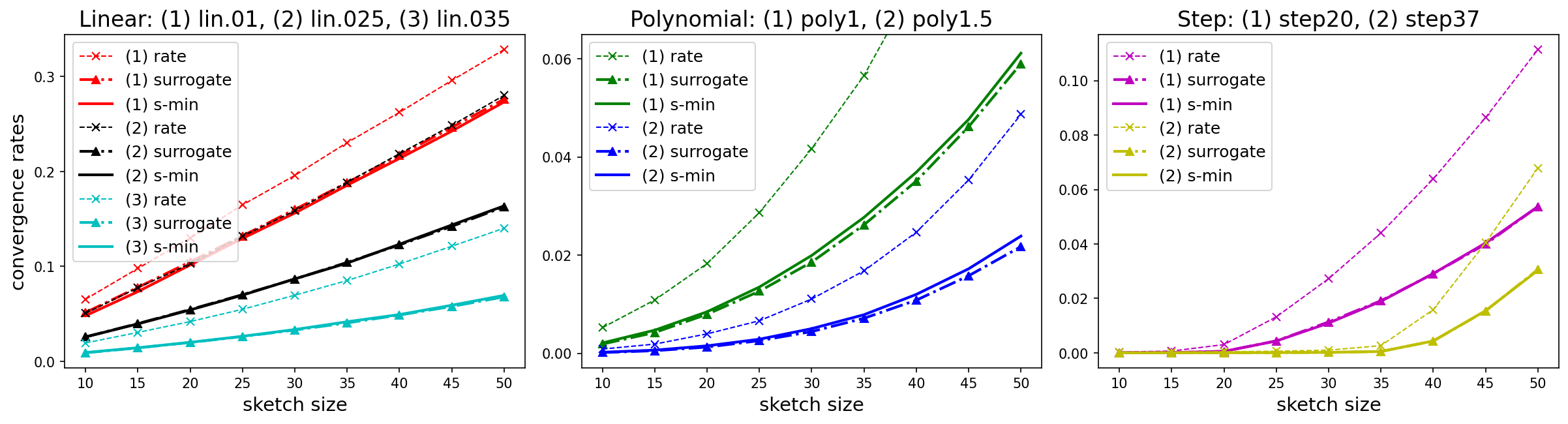}
\caption{As claimed by Theorem~\ref{t:sub-gaussian}, our surrogate expression, computed as in \eqref{surrogate-bound} (where the expectation is estimated as a mean over $50$ Gaussian $(k-1) \times m$ sketches $\S^{(k-1)}$), is a very tight estimator for s-min := $\lambda_{\min} (\E[\P])$ (plotted here as the smallest eigenvalue of the mean of $1600$ sampled projection matrices). Both quantities lower bound and mimic the convergence rate of sketch-and-project (larger rate is better).}
\label{fig-surrogates}
\end{figure}

\subsection{Sparse sketching and convergence}
\label{subsec:experiments-sparse}
\begin{figure}
\centering
\includegraphics[width=\linewidth]{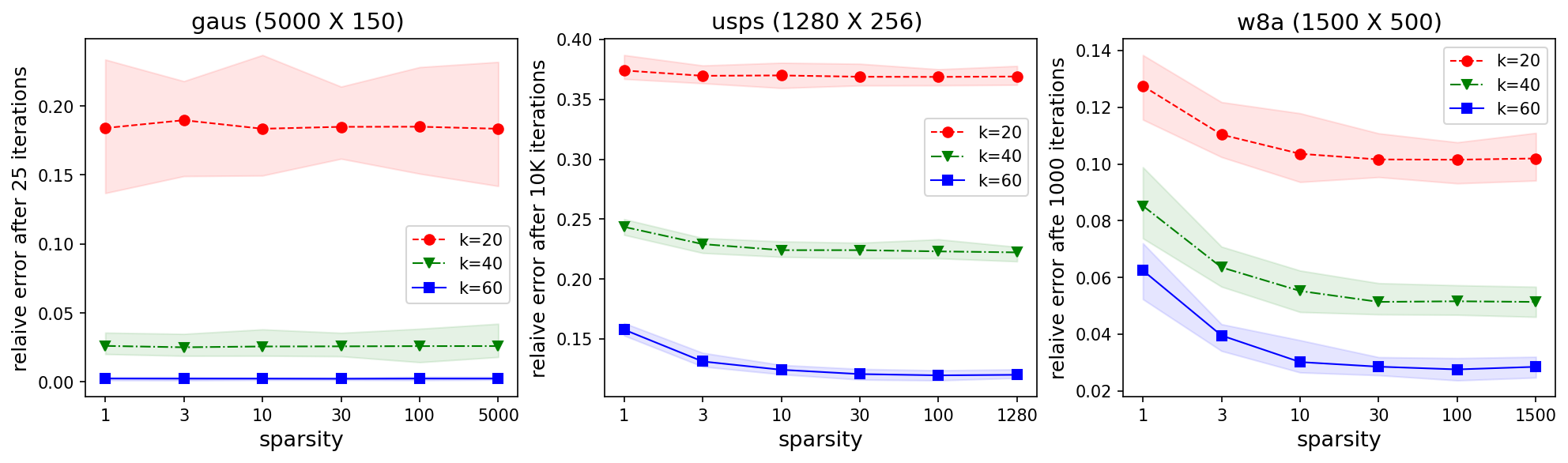}
\caption{Relative error after the same number of iterations for dense and sparse sketching matrices $\S$, averaged over $30$ runs (smaller is better). Regions are shaded between maximum and minimum error in $30$ runs. Sparsity is the number of non-zeros per row of $\S$: the rightmost point refers to the dense case and the leftmost point is for extreme sparsity. We observe consistency of performance between dense and significantly sparse matrices $\S$. For Gaussian data, a $1$-sparse~$\S$ (that effectively does row sampling) performs identically to a dense $\S$. However, depending on the dataset, extremely sparse~$\S$ can lead to slower convergence.}
\label{fig-sparsity}
\end{figure}

In Figure~\ref{fig-sparsity}, we show that sketches formed with extremely sparse matrices $\S$ often exhibit similar convergence behavior to a dense Gaussian~$\S$, confirming our  theoretical results obtained in Theorems~\ref{t:sub-gaussian} and \ref{c:less}. We use uniformly sparsified sketching matrices (i.e., without approximating the leverage scores), which corresponds to the strategy suggested in Remark~\ref{r:less} part 2. In theory, to recover our convergence guarantees for these sparse sketches, we would have to first precondition the linear system using a randomized Fourier transform. However, as is common in practice, we omit this preconditioning here.
We also note that $1$-sparse sketching matrices are equivalent to block sketching via row sampling since multiplying by a Gaussian scalar does not change the subspace associated with a particular equation hyperplane. In the Kaczmarz case, it is the same as a randomized block Kaczmarz version with randomly sampled rows (as discussed in \cite{haddock2021greed}). Even though the $1$-sparse case is not covered by our theory, we show that general trends, such as scaling with the sketch size, can be estimated for cost-efficient block sketching from the theoretical rates.

In our experiments, the sparsity level is controlled by the number of non-zeros per row of sketching matrix~$\S$ (smaller number means sparser matrix). The non-zero entries of $\S$ are Gaussian, and chosen uniformly at random, which corresponds to a simplified version of LESS embeddings, called LessUniform \cite{newton-less}. For a highly coherent dataset, one might consider pre-processing the data with a randomized Fourier transform, as discussed in Section~\ref{s:main-sparse}. Clearly, sparse sketching matrices can be stored and applied much faster than dense ones: a $k\times m$ sketching matrix $\S$ with $s$ non-zeros per row requires $O(ks)$ memory and $O(ksn)$ time to apply to an $m\times n$ matrix $\A$. 

Specifically, we compare the relative error after $30$ steps of the algorithm for several sketch sizes and sparsity regimes. We observe that even sketches with extremely sparse matrix $\S$ can lead to a convergence that is nearly as fast as in the dense Gaussian case. Only for some real-world datasets, we observe some degradation in performance as we get close to $1$-sparse sketching matrices (i.e., one non-zero per row). This suggests that the $O(n\log n)$ non-zeros per row required by our theory (Theorem~\ref{c:less}) is very conservative. 

\section{Conclusions and future directions}
In this work, we give new sharper bounds for the convergence rate of iterative solvers based on the sketch-and-project framework, and also provide first convergence guarantees for structured and sparse sketches. Our main technical contribution is providing a new spectral analysis of expected projections onto the span of random matrices that arise from sketching. An important byproduct of our approach is a discovered connection between two classical and seemingly very different applications of sketching: the convergence rate of sketch-and-project and the approximation error of Randomized~SVD.

Further extensions of our analysis could include quantifying the initial phase of the accelerated convergence, as well as a sharp analysis of sketch-and-project for noisy or under-determined linear systems. We also conjecture that our assumptions on the stable rank and condition number in Theorem~\ref{t:sub-gaussian} (full-spectrum analysis of the expected projection) can be relaxed, and we leave this as an open question for future work. 

More broadly, our results give rise to many potential future directions. First, given the new connection between sketch-and-project and Randomized SVD, it is natural to ask whether recent algorithmic developments in sketching-based matrix approximation (see \cite{martinsson2020randomized} for an overview) can inspire new iterative solvers and extensions of sketch-and-project. Furthermore, expected projections also find applications in machine learning and statistics, for example in the analysis of random feature models \cite{bach2017equivalence}, density estimation \cite{smola2007hilbert}, Gaussian processes \cite{sparse-variational-gp}, kernel mean embeddings \cite{muandet2017kernel}, numerical integration algorithms \cite{belhadji2019kernel}, and more. Extending our results to these applications is a promising direction for future~work.

\subsection{Acknowledgments}
The authors are grateful to Michael Mahoney and Matias Cattaneo for valuable discussions and feedback. E.R. was partially supported by NSF DMS-2309685 and DMS-2108479.

\bibliographystyle{abbrv}
\bibliography{pap,references}

\appendix
\section{Auxiliary results for the proof of Theorem~\ref{t:gaussian}}\label{sec:app_a}

\begin{proof}[Proof of Lemma~\ref{l:tensor-function}]
We start with the following preliminary consideration: note that a matrix function $\M: \mathrm{Sym}(n) \to \mathrm{Sym}(n)$ defined as $$\M(\Sigmab):=\E[\Sigmab^{1/2}\Z^\top(\Z\Sigmab\Z^\top)^\dagger\Z\Sigmab^{1/2}]$$ is an \emph{isotropic tensor function}, i.e., that $\M(\Q\Sigmab\Q^\top)=\Q\M(\Sigmab)\Q^\top$ for all $n\times n$ orthogonal matrices $\Q$. This follows because, due to rotational invariance of the Gaussian distribution, random matrix $\Z\Q$ is distributed identically as $\Z$ for any orthogonal $\Q$, so we have for any symmetric positive semidefinite matrix $\Sigmab$:
\begin{align*}
    \M(\Q\Sigmab\Q^\top) &= \E[\Q\Sigmab^{1/2}\Q^\top\Z^\top(\Z\Q\Sigmab\Q^\top\Z^\top)^\dagger\Z\Q\Sigmab^{1/2}\Q^\top]
    \\
    &=\Q\E[\Sigmab^{1/2}(\Z\Q)^\top(\Z\Q\Sigmab(\Z\Q)^\top)^\dagger\Z\Q\Sigmab^{1/2}]\Q
    =\Q\M(\Sigmab)\Q^\top.
\end{align*}
Additionally, from the literature on continuum mechanics 
(see Section 6.4 in \cite{itskov2007tensor}; e.g., Theorem 6.2), it is known that any isotropic tensor function  $\M(\Sigmab)$ has the same eigenbasis as $\Sigmab$. Equipped with that, we are ready to prove the three claims of the lemma:

1.\quad Note that 
        \begin{equation}\label{ep-m}
    \E[\P] =  \E[(\Z\D\V^\top)^\dagger\Z\D\V^\top] = \M(\V\D^2\V^\top) = \V\M(\D^2)\V^\top=\V\E[\tilde\P]\V^\top,
    \end{equation}
    since the matrices $\S\U$ and $\Z$ are equidistributed due to rotational invariance of the Gaussian distribution, and then we use the isotropic tensor property of $\M$. The claim follows since spectrum is preserved under orthogonal rotations.

2.\quad The matrix $\D$ has the same (standard) basis as $\D^2$, which is the same as the basis of $\M(\D^2)$ due to the isotropic tensor property. 

3.\quad Due to circular trace property, linearity of expectation and \eqref{ep-m}, we can rewrite
\begin{align*}
    \Err(\A, k) = \tr(\A^\top\A(\I - \E[\P])) & = \tr(\V\D^2 \V^T( \I - \M(\V \D^2 \V^\top)))  \\
    & = \tr(\V\D^2 \V^T( \I - \V \M(\D^2) \V^\top) \\
    & = \tr(\D^2(\I - \M(\D^2)) = \Err(\D, k).
\end{align*}
\noindent
This concludes the proof of Lemma~\ref{l:tensor-function}.
\end{proof}

\begin{lemma}[Sherman-Morrison formula]\label{lem:rank-one}
For an invertible matrix $\A \in \R^{n \times n}$ and $\u,\v \in \R^n$, $\A + \u \v^\top$ is invertible if and only if $1+\v^\top \A^{-1} \u \neq 0$. If this holds, then
\begin{equation*}
    (\A + \u \v^\top)^{-1} = \A^{-1} - \frac{\A^{-1} \u \v^\top \A^{-1} }{1+\v^\top \A^{-1} \u}.
\end{equation*}
In particular, it follows that: 
\begin{equation*}
    (\A + \u \v^\top)^{-1} \u = \frac{\A^{-1} \u}{1+\v^\top \A^{-1} \u}.
\end{equation*}
\end{lemma}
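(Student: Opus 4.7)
\textbf{Proof proposal for Lemma~\ref{lem:rank-one}.}

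The plan is a direct algebraic verification for the forward direction, and a short kernel argument for the converse. Both parts are entirely computational, so no serious obstacle is expected; the only thing to be careful about is the edge cases where $\u=0$ or $\A^{-1}\u$ interacts trivially with $\v$.

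First I would treat the ``if'' direction. Assume $1+\v^\top\A^{-1}\u\neq 0$ and let $\M$ denote the proposed inverse $\A^{-1}-\frac{\A^{-1}\u\v^\top\A^{-1}}{1+\v^\top\A^{-1}\u}$. I would simply expand $(\A+\u\v^\top)\M$ and collect terms, using the scalar identity $\v^\top\A^{-1}\u\cdot\v^\top\A^{-1}=(\v^\top\A^{-1}\u)\v^\top\A^{-1}$ to combine the cross terms. Specifically,
\begin{equation*}
(\A+\u\v^\top)\M=\I+\u\v^\top\A^{-1}-\frac{\u\v^\top\A^{-1}+\u(\v^\top\A^{-1}\u)\v^\top\A^{-1}}{1+\v^\top\A^{-1}\u}=\I,
\end{equation*}
because the numerator factors as $\u(1+\v^\top\A^{-1}\u)\v^\top\A^{-1}$. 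The analogous verification of $\M(\A+\u\v^\top)=\I$ is symmetric. This establishes invertibility together with the claimed formula.

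Next I would handle the converse. Suppose $1+\v^\top\A^{-1}\u=0$; in particular this forces $\u\neq 0$ (otherwise the scalar would equal $1$). Then $\A^{-1}\u$ is a nonzero vector, and
\begin{equation*}
(\A+\u\v^\top)(\A^{-1}\u)=\u+\u(\v^\top\A^{-1}\u)=\u(1+\v^\top\A^{-1}\u)=\zero,
\end{equation*}
so $\A+\u\v^\top$ has a nontrivial kernel and is therefore singular. Combined with the previous paragraph this gives the ``if and only if'' statement.

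Finally, the ``in particular'' consequence follows by multiplying the explicit formula for $(\A+\u\v^\top)^{-1}$ on the right by $\u$ and factoring:
\begin{equation*}
(\A+\u\v^\top)^{-1}\u=\A^{-1}\u-\frac{\A^{-1}\u\,(\v^\top\A^{-1}\u)}{1+\v^\top\A^{-1}\u}=\A^{-1}\u\cdot\frac{1}{1+\v^\top\A^{-1}\u}.
\end{equation*}
No step requires more than rearrangement of scalars and matrix products, so the entire argument is a short verification.
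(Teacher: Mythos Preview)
Your argument is correct and is the standard direct verification of the Sherman--Morrison formula. Note, however, that the paper does not actually give a proof of this lemma: it is recorded in the appendix as a classical result and simply stated without argument, so there is no ``paper's proof'' to compare against. Your write-up would serve perfectly well as a self-contained justification if one were desired.
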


The next lemma is essentially a sharper version of the Hanson-Wright concentration inequality for Gaussian vectors.
\begin{lemma}\label{lem:hw_tail}
For any fixed $n\times n$ positive semidefinite matrix $\Y$, an $n$-dimensional standard Gaussian vector $\s$, and any $\alpha > 0$, let
$$
\EE_{\alpha} := \{\s^\top\Y\s \leq \tr\,\Y + \sqrt{2\alpha\,\tr\,(\Y^2)}+\alpha\,\|\Y\|\}.
$$
Then
  \begin{align}\label{hw_gaus}
 \mathbb{P}\big[\neg\EE_{\alpha}\big] \leq \exp(-\alpha/2) \quad 
\text{ and } \quad 
 \E[\s^\top\Y\s\cdot\one_{\neg\EE_\alpha}] \le 5 \tr\,(\Y)\cdot e^{-\alpha/2}.
 \end{align}
\end{lemma}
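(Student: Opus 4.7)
The plan is to diagonalize $\Y$ and reduce both claims to well-known concentration and integration bounds for weighted sums of $\chi^2$ random variables. Writing $\Y=\U\Lambda\U^\top$ with eigenvalues $\lambda_1,\ldots,\lambda_n\geq 0$, the vector $\g:=\U^\top\s$ is again a standard Gaussian, so $X:=\s^\top\Y\s=\sum_i\lambda_i g_i^2$, with $\E X=\sum_i\lambda_i=\tr\Y$, $\sum_i\lambda_i^2=\tr(\Y^2)$, and $\max_i\lambda_i=\|\Y\|$. The first claim then follows immediately from the classical Laurent--Massart inequality: for any $t>0$, $\PP\big(\sum_i\lambda_i(g_i^2-1)\ge 2\sqrt{t\sum_i\lambda_i^2}+2t\max_i\lambda_i\big)\le e^{-t}$; setting $t=\alpha/2$ and matching the norms of $\lambda$ gives $\PP(\neg\EE_\alpha)\le e^{-\alpha/2}$.

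For the truncated expectation, I would use the layer-cake identity
\begin{equation*}
\E[X\,\one_{\neg\EE_\alpha}] \;=\; t_\alpha\,\PP(X>t_\alpha) \;+\; \int_{t_\alpha}^\infty \PP(X>s)\,ds,
\end{equation*}
where $t_\alpha:=\tr\Y+\sqrt{2\alpha\,\tr(\Y^2)}+\alpha\|\Y\|$. The key device is a change of variables in the integral: parametrize $s=s(u):=\tr\Y+\sqrt{2u\,\tr(\Y^2)}+u\|\Y\|$ for $u\ge\alpha$, for which the tail bound reads $\PP(X>s(u))\le e^{-u/2}$, and compute $ds/du=\sqrt{\tr(\Y^2)/(2u)}+\|\Y\|$. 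Thus the integral splits into $\|\Y\|\int_\alpha^\infty e^{-u/2}du$ plus $\sqrt{\tr(\Y^2)/2}\int_\alpha^\infty u^{-1/2}e^{-u/2}du$; both integrals are controlled by $2e^{-\alpha/2}$ times $1$ or $\alpha^{-1/2}$, respectively.

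Combining these estimates and using the crude but sufficient bounds $\|\Y\|\le\tr\Y$ and $\tr(\Y^2)\le(\tr\Y)^2$ gives
\begin{equation*}
\E[X\,\one_{\neg\EE_\alpha}] \;\le\; \tr(\Y)\cdot\big(1+\sqrt{2\alpha}+\alpha+2+\sqrt{2/\alpha}\big)\,e^{-\alpha/2}.
\end{equation*}
To close the argument, I would observe that $\alpha\,e^{-\alpha/2}$, $\sqrt{\alpha}\,e^{-\alpha/2}$, and $\alpha^{-1/2}e^{-\alpha/2}$ are each bounded by absolute constants on their relevant ranges, so the whole parenthesized factor times $e^{-\alpha/2}$ is uniformly bounded. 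The main subtlety is that the coefficient $\sqrt{2/\alpha}\,e^{-\alpha/2}$ blows up as $\alpha\downarrow 0$, which I would handle by splitting into two regimes: for $\alpha\le 2\ln 5$, use the trivial bound $\E[X\,\one_{\neg\EE_\alpha}]\le \E X=\tr\Y\le 5\tr(\Y)e^{-\alpha/2}$; for $\alpha\ge 2\ln 5$, verify directly that the remaining bounded factors sum to at most $5$. The only real obstacle is bookkeeping the numerical constants tightly enough to land on $5$, and this regime split makes that routine.
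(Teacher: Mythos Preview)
Your approach is essentially the paper's: the Laurent--Massart/Hsu--Kakade--Zhang tail bound for the first claim, and a layer-cake identity for the truncated expectation. The only structural difference is that you parametrize the tail integral via $s=s(u)$, whereas the paper subtracts $\tr\Y$ and bounds $\int_{u_\alpha}^\infty \PP(X>\tr\Y+t)\,dt$ pointwise by $\exp(-\min\{t^2/(4\tr\Y^2),\,t/(2\|\Y\|)\})$, obtaining the $\alpha$-free contributions $2\sqrt{\tr\Y^2}\,e^{-\alpha/2}$ and $2\|\Y\|\,e^{-\alpha/2}$.

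There is, however, a real gap in your last step. The parenthesized factor $1+\sqrt{2\alpha}+\alpha+2+\sqrt{2/\alpha}$ is \emph{not} bounded by $5$ for $\alpha\ge 2\ln 5$: already at $\alpha=2\ln 5\approx 3.22$ it exceeds $9$, and it grows linearly in $\alpha$. Your regime split only kills the $\sqrt{2/\alpha}$ singularity at $\alpha\downarrow 0$; it does nothing about the $\sqrt{2\alpha}+\alpha$ growth as $\alpha\to\infty$. Those two terms come entirely from the boundary piece $t_\alpha\,\PP(X>t_\alpha)$, which you bounded by $t_\alpha e^{-\alpha/2}$ with $t_\alpha\le (1+\sqrt{2\alpha}+\alpha)\tr\Y$; this product is too crude. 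To close the argument you would need either to avoid this boundary term or to use that the actual tail $\PP(X>t_\alpha)$ is smaller than $e^{-\alpha/2}$ by an extra factor comparable to $e^{-\sqrt{2\alpha}/2}$, which is precisely what compensates the polynomial growth of $t_\alpha$.

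For the record, the paper's written proof elides the very same boundary contribution: the displayed identity $\E[(X-\tr\Y)\one_{\neg\EE_\alpha}]=\int_{u_\alpha}^\infty\PP(X\ge\tr\Y+t)\,dt$ is missing the term $u_\alpha\,\PP(\neg\EE_\alpha)$ on the right, so strictly speaking it shares the gap. The takeaway is that your change-of-variables computation is correct, but the claimed uniform constant $5$ does not follow from the displayed inequality for all $\alpha>0$ without a further argument controlling $u_\alpha\,\PP(\neg\EE_\alpha)$.
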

\begin{proof}
  The first claim of \eqref{hw_gaus} is  a sharper version of Hanson-Wright inequality for Gaussian vectors, proved in \cite{hsu2012tail}. So, $\E[\one_{\neg\EE_\alpha}]=\PP(\neg\EE_\alpha)\leq e^{-\alpha/2}$. For the second claim, letting $u_\alpha:=\sqrt{2\alpha\,\tr\,(\Y^2)}+\alpha\,\|\Y\|$, we have:
  \begin{align*}
  \E[(\s^\top\Y\s) \one_{\neg\EE_\alpha}] 
    &\le \tr\,\Y\cdot e^{-\alpha/2} + \E[(\s^\top\Y\s-\tr\,\Y) \one_{\neg\EE_\alpha}]\\
    &=\tr\,\Y\cdot e^{-\alpha/2} + \int_{u_\alpha}^\infty \PP\big[\s^\top\Y\s \geq \tr\,\Y+t\big]\, dt
    \\
    &\leq
      \tr\,\Y\cdot e^{-\alpha/2} + \int_{u_\alpha}^\infty \exp\bigg(-\min\Big\{\frac{t^2}{4\,\tr\,(\Y^{2})},\frac{t}{2\|\Y\|}\Big\}\bigg)\,dt
    \\
    & \leq \tr\,\Y\cdot e^{-\alpha/2} + \sqrt{4\,\tr\,(\Y^{2})}\cdot e^{-\alpha/2} + 2\|\Y\|\cdot e^{-\alpha/2}\\
    &\leq 5\,\tr\,\Y\cdot e^{-\alpha/2}.
  \end{align*}  
  This concludes the proof of the lemma.
  
\end{proof}

Finally, the proof of Theorem~\ref{t:gaussian} is concluded by the following computational lemma:
\begin{lemma}\label{lem:gaus-compute}
Let $\Y = (\S\D^2\S^\top)^{-1}$ where $\S$ is a $k\times n$ Gaussian matrix, and $\D$ is  $n\times n$ diagonal matrix positive eigenvalues. Let $\mathcal{U}_{\alpha} = \{\tr\, \Y+ \sqrt{2\alpha\,\tr\,\Y^2}+\alpha\,\|\Y\| \le t_\alpha\}$. Then we can choose $t_\alpha > 0$ so that
$$
\frac{(1-5e^{-\alpha/2})\mathbb{P}^2(\mathcal{U}_\alpha)}{1+t_\alpha\sigma_n^2(\D)} \ge 1 - \frac{4k}n - \frac{8\log(3n)}n.
$$
\end{lemma}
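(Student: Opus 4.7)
The plan is to control all three quantities $\tr\Y$, $\sqrt{\tr\Y^2}$, $\|\Y\|$ appearing in the definition of $\mathcal{U}_\alpha$ through a single scalar, namely the smallest singular value of the Gaussian matrix $\S$. Since $\S\D^2\S^\top\succeq\sigma_n^2(\D)\,\S\S^\top$ and $\Y$ is a $k\times k$ psd matrix, I have the trivial bounds
$$\|\Y\|\leq\frac{1}{\sigma_n^2(\D)\,\sigma_{\min}^2(\S)},\qquad \tr\Y\leq k\|\Y\|,\qquad \tr\Y^2\leq k\|\Y\|^2.$$
The classical Gordon/Davidson--Szarek concentration inequality for Gaussian matrices then gives $\PP(\sigma_{\min}(\S)\geq\sqrt{n}-\sqrt{k}-\sqrt{\alpha})\geq 1-e^{-\alpha/2}$, which on this event yields $\tr\Y+\sqrt{2\alpha\,\tr\Y^2}+\alpha\|\Y\|\leq t_\alpha$ with
$$t_\alpha := \frac{\sigma_n^{-2}(\D)\,(k+\sqrt{2\alpha k}+\alpha)}{(\sqrt{n}-\sqrt{k}-\sqrt{\alpha})^2}.$$
Thus $\PP(\mathcal{U}_\alpha)\geq 1-e^{-\alpha/2}$, and this is precisely the choice of $t_\alpha$ announced earlier in the proof of Theorem~\ref{t:gaussian}.

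Next I combine these estimates via the elementary inequalities $(1-a)(1-b)^2\geq 1-a-2b$ for $a,b\in[0,1]$ and $1/(1+x)\geq 1-x$ for $x\geq 0$, obtaining
$$\frac{(1-5e^{-\alpha/2})\PP^2(\mathcal{U}_\alpha)}{1+t_\alpha\sigma_n^2(\D)}\;\geq\; 1-7e^{-\alpha/2}-\frac{k+\sqrt{2\alpha k}+\alpha}{(\sqrt{n}-\sqrt{k}-\sqrt{\alpha})^2}.$$
I then tune constants: take $\alpha=2\log(3n)$, so that $7e^{-\alpha/2}=7/(3n)$ gets absorbed into an $O(\log(3n)/n)$ term for $n$ at least a small absolute threshold. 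Use the AM-GM bound $\sqrt{2\alpha k}\leq k+\alpha/2$ and the operating-range estimate $(\sqrt{n}-\sqrt{k}-\sqrt{\alpha})^2\geq n/2$, which is valid when $\sqrt{k}+\sqrt{\alpha}\leq(1-1/\sqrt{2})\sqrt{n}$. This yields $t_\alpha\sigma_n^2(\D)\leq 4k/n+6\log(3n)/n$, and adding the $7e^{-\alpha/2}$ term produces the stated $\epsilon\leq 4k/n+8\log(3n)/n$. Outside the operating range one has $4k/n+8\log(3n)/n\geq 1$, so the claim is automatic since the ratio on the LHS of the lemma never exceeds $1$.

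The \emph{main obstacle} is not conceptual --- the $O(k/n+\log n/n)$ scaling falls out immediately from the reduction of all three moments to $\sigma_{\min}(\S)$ and a single application of Gordon's theorem. The delicate part is matching the explicit constants $4$ and $8$, which requires a coordinated choice of $\alpha$, of the AM-GM split for $\sqrt{2\alpha k}$, and of the threshold on $\sqrt{k}+\sqrt{\alpha}$ used to lower-bound the denominator. Beyond Davidson--Szarek concentration and standard calculus inequalities no additional ingredients are needed.
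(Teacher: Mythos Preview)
Your argument coincides with the paper's through the choice of $t_\alpha$: bound $\tr\Y$, $\sqrt{\tr\Y^2}$, $\|\Y\|$ by $k$, $\sqrt k$, $1$ times $\sigma_n^{-2}(\D)\,\sigma_{\min}^{-2}(\S)$, apply Davidson--Szarek to get $\PP(\mathcal U_\alpha)\geq 1-e^{-\alpha/2}$, and set $t_\alpha$ exactly as you do. The divergence is only in the last algebraic step, and there your case split has a genuine gap.

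You assert that outside the operating range $\sqrt k+\sqrt\alpha\leq(1-1/\sqrt2)\sqrt n$ one necessarily has $4k/n+8\log(3n)/n\geq 1$. This is false. From $\sqrt k+\sqrt\alpha>(1-1/\sqrt2)\sqrt n$ together with $(\sqrt k+\sqrt\alpha)^2\leq 2k+2\alpha$ one only obtains $2k+2\alpha>(3/2-\sqrt2)\,n\approx 0.086\,n$, hence $4k+8\log(3n)>0.17\,n$, not $\geq n$. Concretely, take $n=100$, $k=1$, $\alpha=2\log 300\approx 11.4$: then $\sqrt k+\sqrt\alpha\approx 4.38>2.93=(1-1/\sqrt2)\sqrt n$, yet $4k/n+8\log(3n)/n\approx 0.50$. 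The threshold $(\sqrt n-\sqrt k-\sqrt\alpha)^2\geq n/2$ is too aggressive for the complement to be trivially covered with the target constants $4$ and $8$; and if you relax the threshold enough to make the complement trivial, the constants inside the range blow up well past $4$ and $8$.

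The paper avoids any case analysis via a sharper elementary inequality. Writing $\rho=\big((\sqrt k+\sqrt\alpha)/\sqrt n\big)^2$ and $q=t_\alpha\sigma_n^2\leq \rho/(1-\sqrt\rho)^2$, one has
\[
1-\frac{1-7e^{-\alpha/2}}{1+q}
\ =\ \frac{q}{1+q}+\frac{7e^{-\alpha/2}}{1+q}
\ \leq\ 2\rho+7e^{-\alpha/2},
\]
because of the identity $q/(1+q)=2\rho\big/\bigl(1+(1-2\sqrt\rho)^2\bigr)\leq 2\rho$, valid for \emph{all} $\rho\in[0,1)$. With $\alpha=2\log n$ this yields $\epsilon\leq\big(2(\sqrt k+\sqrt{2\log n})^2+7\big)/n\leq 4k/n+8\log(3n)/n$ directly. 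Your linearization $1/(1+q)\geq 1-q$ discards precisely the saturation that keeps $q/(1+q)$ bounded when $q$ is large, which is what forced you into the case split that does not close.
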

\begin{proof}
A direct computation connects all three terms of $\mathcal{U}_{\alpha}$ to the spectral properties of the Gaussian $k \times n$ sketch matrix $\S$ as follows:
\begin{enumerate}
\item \quad $\tr\,\Y \leq \sigma_n^{-2}\cdot\tr\,(\S\S^\top)^{-1}\leq k\sigma_n^{-2}\cdot\|(\S\S^\top)^{-1}\|,$
    
\item \quad  $\sqrt{\tr\,\Y^2}\leq\sqrt k\sigma_n^{-2}\cdot\|(\S\S^\top)^{-1}\|,$ 

\item \quad $\|\Y\|\leq\sigma_n^{-2}\cdot\|(\S\S^\top)^{-1}\|.$
\end{enumerate}
\noindent
The spectral norm of the inverse $\|(\S\S^\top)^{-1}\| = \sigma^2_{\max}(\S^{-1}) = \sigma^{-2}_{\min}(\S)$ can be bounded by the following sharp concentration inequality for standard Gaussian matrices (e.g., \cite{davidson2001local}):
  \begin{align*}
    \mathbb{P}\big[\sigma_{\min}^2(\S)\leq (\sqrt{n} - \sqrt{k} - \sqrt{\alpha})^2\big] \leq \exp(-\alpha/2).
  \end{align*}
It implies that $\PP(\mathcal{U}_\alpha)\geq 1-e^{-\alpha/2}$ if we choose
\begin{align*}
    t_\alpha := \frac{\sigma_n^{-2}\cdot(k+\sqrt{2\alpha k} + \alpha)}{(\sqrt{n} - \sqrt{k} - \sqrt{\alpha})^2} \le \sigma_n^{-2}\cdot\Big(\frac{\sqrt k + \sqrt\alpha}{\sqrt n - \sqrt k - \sqrt\alpha}\Big)^2.
\end{align*}    
Now, with $\rho=(\frac{\sqrt k+\sqrt\alpha}{\sqrt n})^2$, $\delta = 7e^{-\alpha/2}$ and $\alpha=2\log n$, we have:
  \begin{align*}
    \epsilon &=
    1 - \frac{(1-5e^{-\alpha/2})\mathbb{P}^2(\mathcal{U}_\alpha)}{1+t_\alpha\sigma_n^2} 
    \leq 1 - \frac{1-7e^{-\alpha/2}}{1 + (\frac{\sqrt k + \sqrt\alpha}{\sqrt n - \sqrt k - \sqrt\alpha})^2}
    \\
    &=\frac1{(\frac1{\sqrt\rho}-1)^2+1} + \frac\delta{1+\frac1{(\frac1{\sqrt\rho} - 1)^2}} \leq 2\rho + \delta
    \\
    &= \frac{2(\sqrt k + \sqrt{2\log n})^2+7}n \le \frac{4k}n + \frac{8\log(3n)}n,
  \end{align*}  
  where we used the fact that for any $x>0$ we have:
  \begin{align*}
    \frac1{(\frac1{\sqrt x}-1)^2+1} =
 \frac{2x}{1 + (1-2\sqrt x)^2}\leq 2x.
  \end{align*}  
\end{proof}

\section{Proofs of Corollaries~\ref{cor:poly-decays} and \ref{c:flat-tailed}}\label{sec:app-b}

In this section, we show how Theorem \ref{t:gaussian} can be used to show improved convergence guarantees for sketch-and-project, given some additional knowledge about the spectral decay of the matrix. We start by stating a slightly modified version of the theorem that will be more convenient for the corollaries.
\begin{remark}\label{r:gaussian-variant}
As an immediate consequence of the proof of Theorem~\ref{t:gaussian}, one can also obtain the following bound for $k<(\sqrt n - 2)^2$:
$$
    \lambda_{\min}(\E[\P])\geq \frac{0.05}{1+C}\cdot \frac{k\sigma_{\min}^2(\A)}{\Err(\A,k-1)}\quad\text{where}\quad C = \Big(\frac{\sqrt k + 2}{\sqrt n - \sqrt k - 2}\Big)^2.
$$
This bound is less sharp than Theorem \ref{t:gaussian} for small sketch sizes $k$, but it is non-vacuous for a wider range of values of $n$ and $k$. In particular, we have $C\leq 100$  when $n\geq 100$ and $k\leq n/2$.
\end{remark}

\begin{proof}[Proof of Corollary~\ref{cor:poly-decays}] 
Note that when $k$ is bounded by an absolute constant, then the dependence of the convergence bounds on $k$ can be absorbed into the constant $C$, in which case the claim reduces to the results from prior work, e.g., Theorem 2 in \cite{rebrova2021block}. So, for the remainder of the proof, we will assume that $n\geq k\geq 100$.  For convenience, we will use the bound from Remark \ref{r:gaussian-variant}, and combine this with \eqref{eq:characterization}. The first claim follows immediately from Remark \ref{r:gaussian-variant} for sketch sizes $k\leq n/2$, since $\Err(\A,k)\leq\|\A\|_F^2$. To show it for larger sizes, it suffices to observe that $\lambda_{\min}(\E[\P])$ is non-decreasing as a function of $k$, so if we show a lower bound for $k=n/2$, then the same bound holds for $k>n/2$. The factor $2$ in the bound (coming from using a different $k$) can be absorbed into the constant.

We next prove the polynomial decay part of Corollary~\ref{cor:poly-decays}, by combining Remark~\ref{r:gaussian-variant} with Lemma~\ref{l:spectral1}. Here, again, if $k<4\beta$, then the dependence on $k$ can be absorbed into the constant $C$, so we assume that $k\geq 4\beta$. Let $p=k/\beta-1$ in the lemma. 
We can now bound the Randomized SVD error for sketch size $k-1$:
\begin{align*}
     \Err(\A,k-1)
     &\leq \frac{k-2}{p-1}\cdot\sum_{i\geq k-1-p}\sigma_i^2
     \leq \frac{k-2}{\frac k\beta-2}\cdot c\sigma_1^2\sum_{i\geq k-1-p}i^{-\beta}
     \\
     &\leq \beta\,\frac{k-2}{k-2\beta}\cdot \frac{c\sigma_1^2}{(\beta-1)(k-1-p)^{\beta-1}}
     \\
     &\leq \frac{k-2}{k-2\beta}\cdot
     \Big(\frac{\beta}{\beta-1}\Big)^\beta
     \frac{c\sigma_1^2}{k^{\beta-1}}
     \leq \frac{k-2}{k-2\beta}\cdot
     \frac{\beta}{\beta-1}\cdot \frac{ec\sigma_1^2}{k^{\beta-1}}.
\end{align*}
As a result, using Theorem \ref{t:gaussian} (Remark \ref{r:gaussian-variant}) and letting $\kappa(\A)=\sigma_{\max}(\A)/\sigma_{\min}(\A)$, we conclude that there is a constant $C>0$ such that:
\begin{align*}
\lambda_{\min}(\E[\P])\geq\Big(1-2\,\frac{\beta-1}{k-2}\Big)\Big(1-\frac1\beta\Big)\cdot\frac{k^\beta}{Cc\kappa^2(\A)}.
\end{align*}
So, since $\kappa^2(\A)\geq \|\A\|_F^2/\sigma_{\min}^2(\A)$ and $k\geq 4\beta$, we get the claim by adjusting the constant.
The result for exponential decay follows analogously, using Lemma \ref{l:spectral1} with $p=2$.
\end{proof}

\begin{proof}[Proof of Corollary~\ref{c:flat-tailed}] 
Here, we will again combine Remark~\ref{r:gaussian-variant} with Lemma~\ref{l:spectral1}. First, consider any $k\geq \max\{1.5r,10\}$. By the spectral assumption, matrix $\A$ satisfies the property that for any $i\geq 2k/3$, we have $\sigma_i^2(\A)\leq c\cdot \sigma_{\min}^2(\A)$. We will now apply Lemma \ref{l:spectral1} with $p=k/3-1$. In this case, we have:
\begin{align*}
\Err(\A,k-1)\leq \frac{k-2}{k/3-2}\cdot\sum_{i\geq 2k/3} \sigma_i^2 \leq 6\cdot \sum_{i\geq 2k/3}c\cdot \sigma_{\min}^2(\A)\leq 6cn\cdot \sigma_{\min}^2(\A).
\end{align*}
Thus, as long as $k$ and $n$ are in the range of values supported by Remark \ref{r:gaussian-variant}, we obtain the desired convergence rate, with some constant $C>0$:
\begin{align}
    \lambda_{\min}(\E[\P]) \geq \frac{k\sigma_{\min}^2(\A)}{C6cn\sigma_{\min}^2(A)}= \frac{k}{6Ccn}.\label{eq:flat}
\end{align}
Note that by making the constant $C_1$ in the statement of the claim sufficiently large, we can assume that $n\geq k\geq 250$, in which case we can ensure that $C$ in Remark \ref{r:gaussian-variant} is bounded by an absolute constant for any $k\leq3n/4$. Now, we consider two cases. Case~1: if $2r\geq n$, then the corollary's claim is trivially satisfied because $k\geq \max\{2r,C_1\}$ implies that $k\geq n$. Case~2: Suppose that $2r<n$. In this case, $\max\{1.5r,10\}\leq3n/4$, so there is a non-empty set of sketch sizes $\max\{1.5r,10\}\leq k \leq 3n/4$ that fall in the range of Remark \ref{r:gaussian-variant} and at the same time satisfy bound \eqref{eq:flat}. Having shown the desired bound for all $k$ up to $k=3n/4$, we can rely on the monotonicity of $\lambda_{\min}(\E[\P])$ as a function of $k$, just like in the proof of Corollary~\ref{cor:poly-decays}, to extend \eqref{eq:flat} to $k>3n/4$ (paying an extra factor of at most $4/3$ in the bound).
\end{proof}

\section{Sub-gaussian sketching: proof of Theorem \ref{t:sub-gaussian}}
\label{app:subgaus-proof}
\begin{proof}[Proof of Lemma~\ref{l:def-delta}]
Denote $\sigma_1^2\geq \sigma_2^2\geq ...\geq \sigma_n^2$ as the eigenvalues of $\Sigmab=\A^\top\A$. 
Note that $\P_{-i}$ is a rank $k-1$ projection, which means that $\A\P_{-i}$ can be viewed as a rank $k-1$ approximation of $\A$. Thus, its Frobenius norm approximation error $\|\A-\A\P_{-i}\|_F^2=\tr\Sigmab\Pperp{-i}$ has to be at least as large as the error of the best rank $k-1$ approximation, which is $\sum_{i\geq k}\sigma_i^2$. Thus,
\begin{equation*}\tr\Sigmab\Pperp{i}\geq \sum_{i\geq k}\sigma_i^2.\end{equation*}
Further, using that $\|\A\|_F^2 = \sum_{i}\sigma_i^2$, $\|\A\|^2 = \sigma_1^2$ and the stable rank $r =  \|\A\|_F^2/\|\A\|^2$, we continue lower estimating as
\begin{equation*}
\sum_{i\geq k}\sigma_i^2 \geq \sum_{i}\sigma_i^2 - \sigma_1^2k =  \|\A\|_F^2 - \|\A\|^2 k= \|\A\|^2(r-k)\geq \|\A\Pperp{i}\|^2(r-k),
\end{equation*}
where in the last step we used the fact that a projection can only decrease operator norm of a matrix. Putting the ingredients together, we got
\begin{equation*}
\frac{\tr\Sigmab\Pperp{i}}{\|\A\Pperp{i}\|^2} \ge r - k.
\end{equation*}
Note that up to this moment we did not employ any distributional assumptions on $\S$. Now,  due to $L$-Euclidean concentration of each row of the sketching matrix $\S$ and independence between $\x_i$ and $\P_{-i}$,

$$
    \PP(\neg \mathcal{E}_i)
     \leq
     \E\,2\exp\Big(-c\frac{\tr\Sigmab\Pperp{i}}{L^2\|\A\Pperp{i}\|^2}\Big)
        \leq 2\exp(-c(r-k)/L^2). 
$$
Then,
$
\PP(\neg \EE)\leq 2k\exp(-c(r-k)/L^2)\leq2\exp(-c'r/L^2).
$
\end{proof}

\begin{lemma}\label{lem:cond-symm} Under the notations and conditions of Theorem~\ref{t:sub-gaussian}, under normalization $\|\Sigmab\| = 1$, and using event $\mathcal{E}$ as defined in Lemma \ref{l:def-delta}, we have:
	$$
	\|\overbar\P^{-1/2}\E[\P]\overbar\P^{-1/2}-\I\| \le \|\overbar\P^{-1/2}\E_\EE[\P]\overbar\P^{-1/2}-\I\|
	+ \|\overbar\P^{-1}\|\cdot 2\cdot \PP(\neg\Ec)
	$$
	and
	$$
	\|\overbar\P^{-1}\| \le 1+r\kappa^2.
	$$
\end{lemma}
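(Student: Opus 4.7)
The two claimed inequalities are essentially bookkeeping, and the proof will be short. For the first inequality, the plan is to apply the law of total probability to $\E[\P]$ and isolate the conditional expectation on $\Ec$. Writing
\begin{equation*}
\E[\P] = \E_\Ec[\P] + \PP(\neg\Ec)\bigl(\E_{\neg\Ec}[\P] - \E_\Ec[\P]\bigr),
\end{equation*}
conjugating by $\overbar\P^{-1/2}$ and subtracting $\I$ gives
\begin{equation*}
\overbar\P^{-1/2}\E[\P]\overbar\P^{-1/2} - \I = \bigl(\overbar\P^{-1/2}\E_\Ec[\P]\overbar\P^{-1/2} - \I\bigr) + \PP(\neg\Ec)\,\overbar\P^{-1/2}\bigl(\E_{\neg\Ec}[\P] - \E_\Ec[\P]\bigr)\overbar\P^{-1/2}.
\end{equation*}
The triangle inequality then reduces the task to bounding the spectral norm of the second summand. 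Since $\zero \preceq \P \preceq \I$ almost surely, both $\E_\Ec[\P]$ and $\E_{\neg\Ec}[\P]$ lie in $[\zero,\I]$, so their difference has spectral norm at most $2$, and the submultiplicative bound $\|\overbar\P^{-1/2}\M\overbar\P^{-1/2}\| \le \|\overbar\P^{-1}\|\cdot\|\M\|$ yields the claimed first inequality.

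For the second inequality, I would simply invert the explicit surrogate expression. Since $\overbar\P = \gamma_k\Sigmab(\gamma_k\Sigmab+\I)^{-1}$, we get $\overbar\P^{-1} = \I + \gamma_k^{-1}\Sigmab^{-1}$, so $\|\overbar\P^{-1}\| \le 1 + \gamma_k^{-1}\|\Sigmab^{-1}\|$. Under the normalization $\|\Sigmab\|=1$, we have $\|\Sigmab^{-1}\| = 1/\sigma_{\min}^2(\A) = \kappa^2$. Finally, $\gamma_k^{-1} = \Err(\A,k-1)/k \le \|\A\|_F^2/k = r/k \le r$ (for $k\ge 1$), which combined with the previous bound gives $\|\overbar\P^{-1}\| \le 1 + r\kappa^2$.

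\textbf{Main obstacle.} There is no real obstacle here: this lemma is a bookkeeping step whose sole purpose is to pass from the unconditional expected projection to the conditional one while tracking the blow-up factor $\|\overbar\P^{-1}\|$, which is used later to argue that $\delta\,\|\overbar\P^{-1}\|$ is small when $r\gtrsim L^2\log\kappa$. The only mild care needed is in the constant $2$ (which arises because we bound the difference of two operators each in $[\zero,\I]$ rather than using a single-sided bound), and in noticing that $\gamma_k^{-1}\le r$ uses nothing more than the trivial Randomized SVD error upper bound $\Err(\A,k-1)\le \|\A\|_F^2$.
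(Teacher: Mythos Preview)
Your proposal is correct and follows essentially the same approach as the paper: both use the law of total probability to write $\E[\P]-\E_\Ec[\P]=\PP(\neg\Ec)(\E_{\neg\Ec}[\P]-\E_\Ec[\P])$, bound the difference of two operators in $[\zero,\I]$ by $2$, and then pull out $\|\overbar\P^{-1}\|$ via $\|\overbar\P^{-1/2}\|^2=\|\overbar\P^{-1}\|$; the second inequality is likewise obtained in the paper by inverting the surrogate to $\I+\gamma_k^{-1}\Sigmab^{-1}$ and using $\gamma_k^{-1}\le \|\A\|_F^2/k\le r$.
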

\begin{proof}
	By the law of total probability,
	$$
	\|\E_{\mathcal{E}}[\P] - \E[\P]\| = \|\E_{\mathcal{E}} [\P](I - \PP(\mathcal{E})) -  \E_{\neg\mathcal{E}} [\P]\PP(\neg\Ec)\| = \left\|\E_{\Ec}[\P]-\E_{\neg\Ec}[\P]\right\|\cdot\PP(\neg\Ec).
	$$
	Then, 
    \begin{align*}
\|\overbar\P^{-1/2}\E[\P]\overbar\P^{-1/2}-\I\|
   &\leq 
   \|\overbar\P^{-1/2}\E_\EE[\P]\overbar\P^{-1/2}-\I\|
   +\|\overbar\P^{-1/2}(\E_{\Ec}[\P]-\E[\P])\overbar\P^{-1/2}\|
   \\
   &\leq \|\overbar\P^{-1/2}\E_\EE[\P]\overbar\P^{-1/2}-\I\|
   +\|\overbar\P^{-1}\|\cdot \left\|\E_{\Ec}[\P]-\E_{\neg\Ec}[\P]\right\|\cdot\PP(\neg\Ec)
   \\
   &\leq \|\overbar\P^{-1/2}\E_\EE[\P]\overbar\P^{-1/2}-\I\|
   + \|\overbar\P^{-1}\|\cdot 2\cdot \PP(\neg\Ec).
\end{align*} Further,
$$\|\overbar\P^{-1}\|= \|\I + \gamma^{-1}\Sigmab^{-1}\| \le 1 + \kappa^2/\gamma \leq 1+r\kappa^2$$
 since $r=\tr(\Sigmab) = \|\A\|_F^2 \ge  \E\|\A\Pperp{k}\|_F^2$.
\end{proof}
A key technical step of the proof of Theorem~\ref{t:sub-gaussian} is the following convenient three-part split of the symmetric spectral difference between expected sketched matrix and its surrogate.

\begin{lemma}\label{lem:three-part-splitting}
Let $\A$ be a full rank $m\times n$ matrix  and let $\S$ be a $k\times m$ random sketching matrix with i.i.d.~rows having mean zero and identity covariance. Under the notations and conditions of Theorem~\ref{t:sub-gaussian}, for any positive probability event $\EE$,
\begin{align*}
\|\overbar\P^{-\frac{1}{2}}\E_\EE[\P]\overbar\P^{-\frac{1}{2}}-\I\| &\leq \left\|\Sigmab^{-\frac{1}{2}}\E_{\Ec}\big[ \Pperp{k} \x_k\x_k^\top-\Pperp{k}\Sigmab\big]\Sigmab^{-\frac{1}{2}}\right\| +  \left\|\Sigmab^{-\frac{1}{2}}\E_{\Ec}[\P_{\perp k}\Sigmab - \P_\perp\Sigmab]\Sigmab^{-\frac{1}{2}}\right\|\\
      &+ \left\|\E_{\Ec}\bigg[\left(\frac{\E[\x_k^\top\Pperp{k}\x_k]}{\x_k^\top\Pperp{k}\x_k}-1\right)\cdot
        \Sigmab^{-1/2}\Pperp{k} \x_k\x_k^\top \Sigmab^{-1/2}\bigg]\right\|.
\end{align*}
\end{lemma}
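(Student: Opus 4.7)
The plan is to build the three-part splitting out of a single clean identity for $\P$ followed by three sequential first-order approximations, and then to transfer the $\overbar{\P}^{-1/2}$-conjugation on the left into the $\Sigmab^{-1/2}$-conjugation that appears in $T_1,T_2,T_3$. The starting point is the identity
\[
\P \;=\; \sum_{i=1}^{k}\frac{\Pperp{i}\x_i\x_i^{\top}}{\xi_i},\qquad \xi_i = \x_i^{\top}\Pperp{i}\x_i,
\]
which I would verify by checking that the right-hand side acts as the identity on each $\x_j$ (and hence on the range of $\P$, spanned by the $\x_i$) and vanishes on its orthogonal complement; it is the ``row-wise'' Schur decomposition of $\P$. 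Since the conditioning event $\Ec = \bigwedge_i \Ec_i$ is symmetric in the rows, exchangeability gives $\E_{\Ec}[\P] = k\,\E_{\Ec}\!\left[\Pperp{k}\x_k\x_k^{\top}/\xi\right]$, which is the master identity I would manipulate.

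Next I would introduce the three error terms by three successive replacements inside $\Pperp{k}\x_k\x_k^{\top}/\xi$: (a) replace $1/\xi$ by $1/\E\xi = \gamma/k$, producing the remainder $\E_{\Ec}\!\left[\tfrac{\E\xi-\xi}{\xi\E\xi}\Pperp{k}\x_k\x_k^{\top}\right]$, which becomes $T_3$; (b) replace $\x_k\x_k^{\top}$ by $\Sigmab$ (its mean conditional on $\Pperp{k}$), producing $\tfrac{1}{\E\xi}\E_{\Ec}[\Pperp{k}(\x_k\x_k^{\top}-\Sigmab)]$, which becomes $T_1$; (c) replace $\Pperp{k}$ by $\P_{\perp}$, producing $\tfrac{1}{\E\xi}\E_{\Ec}[(\Pperp{k}-\P_{\perp})\Sigmab]$, which becomes $T_2$. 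After multiplication by $k=\gamma\,\E\xi$ the leading term is $\gamma\,\E_{\Ec}[\P_{\perp}]\Sigmab$, and since the isotropy of the rows together with the rotational invariance of $\Ec$ under $\A\mapsto \A\Q$ forces $\E_{\Ec}[\P_{\perp}]$ to share the eigenbasis of $\Sigmab$, this coincides with $\gamma\Sigmab\,\E_{\Ec}[\P_{\perp}]$.

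Substituting $\E_{\Ec}[\P_{\perp}] = \I - \E_{\Ec}[\P]$ and collecting terms yields the closed identity
\[
(\I+\gamma\Sigmab)\,\E_{\Ec}[\P] \;=\; \gamma\Sigmab + \gamma(M_1+M_2+M_3),
\]
with $M_1,M_2,M_3$ the (unscaled) error matrices above. Comparing with the defining surrogate identity $\overbar{\P} = \gamma\Sigmab\,\overbar{\P}_{\perp}$, i.e.\ $(\I+\gamma\Sigmab)\overbar{\P} = \gamma\Sigmab$, subtraction gives the clean formula
\[
\E_{\Ec}[\P] - \overbar{\P} \;=\; \gamma\,\overbar{\P}_{\perp}\,(M_1+M_2+M_3).
\]

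It remains to conjugate by $\overbar{\P}^{-1/2}$ on both sides. Using the commutativity of $\Sigmab$ and $\overbar{\P}$ together with the algebraic identity $\gamma\,\overbar{\P}_{\perp} = \overbar{\P}\Sigmab^{-1}$, the left-hand side becomes $\Sigmab^{-1}\overbar{\P}^{1/2}(M_1+M_2+M_3)\overbar{\P}^{-1/2}$; inserting $\Sigmab^{1/2}\Sigmab^{-1/2}$ to symmetrize factors this as $\Sigmab^{-1/2}\cdot\Q M_j\Q^{-1}\cdot\Sigmab^{-1/2}$ with $\Q=\Sigmab^{-1/2}\overbar{\P}^{1/2}$, and the triangle inequality then delivers the desired bound by $T_1+T_2+T_3$. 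The main obstacle is precisely this last accounting: one has to verify that the $\overbar{\P}^{-1/2}$-conjugation on the LHS reduces to the $\Sigmab^{-1/2}$-conjugation appearing in the $T_j$ without paying a condition-number or $\gamma$-dependent penalty, which works because $\Q$ and $\Q^{-1}$ are mutually inverse commuting symmetric matrices arising naturally from the surrogate formula $\overbar{\P}=\gamma\Sigmab(\gamma\Sigmab+\I)^{-1}$.
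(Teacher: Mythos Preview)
Your overall architecture is correct and essentially mirrors the paper's argument: the master identity $\E_{\Ec}[\P]=k\,\E_{\Ec}[\Pperp{k}\x_k\x_k^\top/\xi]$ followed by the same three sequential replacements is exactly what the paper does. The differences are in how you pass from $\overbar\P^{-1/2}$-conjugation to $\Sigmab^{-1/2}$-conjugation, and there are two problems in your version of that passage.

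The first is a genuine error. You claim that ``rotational invariance of $\Ec$ under $\A\mapsto\A\Q$'' forces $\E_{\Ec}[\P_\perp]$ to share the eigenbasis of $\Sigmab$, so that $\E_{\Ec}[\P_\perp]\Sigmab=\Sigmab\,\E_{\Ec}[\P_\perp]$. This reasoning is valid only if the distribution of $\S\A$ depends on $\A$ solely through $\Sigmab=\A^\top\A$; that holds for Gaussian $\S$ by rotational invariance of the rows, but \emph{fails} for generic sub-gaussian $\S$ (Rademacher rows already break it). Under the hypotheses of Theorem~\ref{t:sub-gaussian} there is no reason for $\E_{\Ec}[\P]$ and $\Sigmab$ to commute, so this step is simply wrong. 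Fortunately you do not need it: carrying through your computation without commuting yields $\E_{\Ec}[\P](\I+\gamma\Sigmab)=\gamma\Sigmab+\gamma(M_1+M_2+M_3)$, hence $\E_{\Ec}[\P]-\overbar\P=\gamma(M_1+M_2+M_3)\overbar\P_\perp$, i.e.\ the $\overbar\P_\perp$ lands on the right rather than the left. The rest of your algebra then goes through with $\Q$ and $\Q^{-1}$ interchanged.

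The second problem is that your final justification for dropping the $\Q$'s is insufficient. You write that it ``works because $\Q$ and $\Q^{-1}$ are mutually inverse commuting symmetric matrices,'' but a similarity $\Q[\cdot]\Q^{-1}$ does not preserve operator norm in general, even for symmetric $\Q$. What makes it work is that the left-hand side $\overbar\P^{-1/2}\E_{\Ec}[\P]\overbar\P^{-1/2}-\I$ is \emph{symmetric}, so its operator norm equals its spectral radius, which \emph{is} preserved under similarity; thus $\|\Q^{-1}N\Q\|\geq\rho(\Q^{-1}N\Q)=\rho(N)$ with $N=\Sigmab^{-1/2}(M_1+M_2+M_3)\Sigmab^{-1/2}$, and then the triangle inequality finishes it. This is precisely the ``$\|\U\V\|\leq\|\V\U\|$ when $\U\V$ is symmetric'' fact the paper invokes, applied \emph{before} the three-term split. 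The paper avoids your detour through a closed identity by doing this norm trick at the very start: it rewrites $\overbar\P^{-1/2}[\cdot]\overbar\P^{-1/2}$ as a similarity by $(\Sigmab+\gamma^{-1}\I)^{1/2}$, applies the symmetric-$\U\V$ bound once, and lands directly on $\|\Sigmab^{-1/2}[\cdot]\Sigmab^{-1/2}\|$ before inserting the rank-one formula and splitting.
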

The proof of Lemma~\ref{lem:three-part-splitting} relies on the following version of the Sherman-Morrison rank-one update formula, which applies to the Moore-Penrose pseudoinverse.
\begin{lemma}[\cite{10.2307/2099767,precise-expressions}]\label{lem:rank-one-update}
    For $\X \in \mathbb R^{k \times n}$ with $k<n$, denote $\P = \X^\dagger \X$ and $\P_{-k} = \X_{-k}^\dagger \X_{-k}$, with $\X_{-i} \in \mathbb R^{(k-1) \times n}$ the matrix $\X$ without its i-th row $\x_i \in \mathbb R^n$. Then, as long as $\x_k^\top(\I-\P_{-k})\x_k\neq 0$, we have:
    \begin{align*}
      (\X^\top\X)^\dagger\x_k = \frac{(\I - \P_{-k}) \x_k}{\x_k^\top (\I - \P_{-k}) \x_k}, \quad \P - \P_{-k} = \frac{(\I - \P_{-k}) \x_k \x_k^\top (\I - \P_{-k}) }{\x_k^\top (\I - \P_{-k}) \x_k}.
    \end{align*}
\end{lemma}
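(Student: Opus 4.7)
The plan is to prove the two identities by exploiting the geometric structure of the row spans of $\X$ and $\X_{-k}$, together with a direct verification argument for the pseudoinverse formula.

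\textbf{Setup.} Let $\w := (\I - \P_{-k})\x_k$, which is nonzero precisely under the hypothesis $\x_k^\top(\I-\P_{-k})\x_k \neq 0$. Since $\P_{-k}$ is the orthogonal projection onto $\text{rowspan}(\X_{-k}) = \text{range}(\X_{-k}^\top)$, the vector $\w$ is the component of $\x_k$ orthogonal to $\text{rowspan}(\X_{-k})$. Because $\x_k = \P_{-k}\x_k + \w$ with the first summand in $\text{rowspan}(\X_{-k})$, the decomposition $\text{rowspan}(\X) = \text{rowspan}(\X_{-k}) \oplus \text{span}(\w)$ is an orthogonal direct sum. Also note $\|\w\|^2 = \x_k^\top(\I-\P_{-k})^2\x_k = \x_k^\top(\I-\P_{-k})\x_k$ by idempotence and symmetry of $\I - \P_{-k}$.

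\textbf{Second identity.} Since $\P$ and $\P_{-k}$ are orthogonal projections onto $\text{rowspan}(\X)$ and $\text{rowspan}(\X_{-k})$, and the former is the orthogonal direct sum of the latter with $\text{span}(\w)$, the difference $\P - \P_{-k}$ is the orthogonal projection onto $\text{span}(\w)$. That projection is $\w\w^\top/\|\w\|^2$, which upon substituting the expressions for $\w$ and $\|\w\|^2$ yields the claimed formula.

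\textbf{First identity.} Let $\v := \w/(\x_k^\top(\I-\P_{-k})\x_k)$. I will verify that $\v = (\X^\top\X)^\dagger \x_k$ by checking (i) $\v \in \text{range}(\X^\top\X)$ and (ii) $(\X^\top\X)\v = \x_k$; together with the fact that a symmetric PSD matrix $S$ satisfies $S^\dagger S \v = \v$ for every $\v \in \text{range}(S)$, these imply $\v = S^\dagger(Sv) = (\X^\top\X)^\dagger \x_k$. For (i): both $\x_k = \X^\top\e_k$ and $\P_{-k}\x_k \in \text{rowspan}(\X_{-k}) \subseteq \text{rowspan}(\X)$ lie in $\text{range}(\X^\top) = \text{range}(\X^\top\X)$, so $\w$ (and hence $\v$) does too. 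For (ii), split $\X^\top\X = \X_{-k}^\top\X_{-k} + \x_k\x_k^\top$; the first summand annihilates $\w$ because $\X_{-k}\w = \X_{-k}(\I-\P_{-k})\x_k = 0$, while the second contributes $\x_k(\x_k^\top\w)/(\x_k^\top(\I-\P_{-k})\x_k) = \x_k$ since $\x_k^\top\w = \x_k^\top(\I-\P_{-k})\x_k$. This yields $(\X^\top\X)\v = \x_k$.

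\textbf{Main obstacle.} The only subtlety is invoking the pseudoinverse identity $S^\dagger S\v = \v$ for $\v \in \text{range}(S)$ in the symmetric PSD setting; this follows from the standard characterization $S^\dagger S = $ orthogonal projection onto $\text{range}(S^\top) = \text{range}(S)$. Note no rank assumption on $\X_{-k}$ is needed beyond what the hypothesis provides, and the condition $k < n$ is only relevant to ensure $\w$ can be nonzero.
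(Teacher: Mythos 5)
Your proof is correct. Note that the paper itself does not prove this lemma; it imports it from the cited references, where it is obtained from general rank-one update formulas for the Moore--Penrose pseudoinverse (Meyer-style case analysis for $(\A+\u\v^\top)^\dagger$). Your argument is a clean, self-contained alternative. The second identity follows purely geometrically: with $\mathbf{w}=(\I-\P_{-k})\x_k$ you correctly identify $\mathrm{rowspan}(\X)=\mathrm{rowspan}(\X_{-k})\oplus\mathrm{span}(\mathbf{w})$ as an orthogonal direct sum, so the difference of the two orthogonal projections is exactly $\mathbf{w}\mathbf{w}^\top/\|\mathbf{w}\|^2$. The first identity is verified directly: $\X_{-k}(\I-\P_{-k})=\zero$ is the defining property of the pseudoinverse, so $(\X^\top\X)\mathbf{w}/\|\mathbf{w}\|^2=\x_k$, and the membership $\mathbf{w}\in\mathrm{range}(\X^\top)=\mathrm{range}(\X^\top\X)$ together with the fact that $S^\dagger S$ is the orthogonal projector onto $\mathrm{range}(S)$ for symmetric $S$ pins down $(\X^\top\X)^\dagger\x_k$ without any rank assumption on $\X$ or $\X_{-k}$ --- which is exactly the level of generality the lemma requires. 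The general-purpose route in the references buys a uniform treatment of all rank-one pseudoinverse modifications; your route buys brevity and transparency for precisely the case used in the paper.
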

\begin{proof}[Proof of Lemma~\ref{lem:three-part-splitting}]
Since for $\X =\S\A$,
$$
\E \P = \E (\X^\dagger\X) = \E (\X^\top\X)^\dagger(\X^\top\X) = \sum_{i=1}^k \E (\X^\top\X)^\dagger\x_i\x_i^\top = k \E (\X^\top\X)^\dagger\x_k \x_k^\top,
$$
we have by Lemma~\ref{lem:rank-one-update}
\begin{equation}\label{shm}
\E \P = k \E \left[\frac{(\I - \P_{-k}) \x_k\x_k^\top}{\x_k^\top (\I - \P_{-k}) \x_k}\right].
\end{equation} 
One can check directly that $\overbar\P^{-1} = (\Sigmab + \gamma^{-1} \I)\Sigmab^{-1}$. Therefore, also using that $\|\U\V\| \leq \|\V\U\|$ if $\U\V$ is symmetric,
\begin{align*}
\|\overbar\P^{-1/2}\E_\EE[\P]&\overbar\P^{-1/2}-\I\| 
= \left\|\overbar\P^{-1/2}\left[\E_\EE[\P]\overbar\P^{-1} - \I\right]\overbar\P^{1/2}\right\| \\
=&\left\|(\Sigmab+\gamma^{-1}\I)^{1/2}\Sigmab^{-1/2}(\E_\EE[\P] \gamma^{-1}\Sigmab^{-1} - \E_\EE[\P_\perp])
\Sigmab^{1/2}(\Sigmab+\gamma^{-1}\I)^{-1/2}\right\| \\
\leq&\left\|\Sigmab^{-1/2}\E_\EE[\P] \gamma^{-1}\Sigmab^{-1/2} - \Sigmab^{-1/2}\E_\EE[\P_\perp]\Sigmab^{1/2}\right\|  \\
\overset{\eqref{shm}}{=} &\left\|
\gamma^{-1}\Sigmab^{-1/2}
\E_\EE\left[\frac{k\Pperp{k} \x_k\x_k^\top}{\x_k^\top\Pperp{k}\x_k}\right]
\Sigmab^{-1/2}- \Sigmab^{-1/2}\E_\EE[\P_\perp\Sigmab]\Sigmab^{-1/2}\right\|.
\end{align*}
We have $k/\gamma = \E\|\A\Pperp{k}\|_F^2 = \E\tr(\Sigmab \Pperp{k})=\E[\x_k^\top\Pperp{k}\x_k]$. With $\xi := \x_k^\top\Pperp{k}\x_k$, we can write
\begin{align*}
\|\overbar\P^{-1/2}\E_\EE[\P]\overbar\P^{-1/2}-\I\| 
\leq &\left\|\E_{\Ec}\left[\frac{\E\xi}{\xi}\cdot
   \Sigmab^{-1/2}\Pperp{k}\x_k\x_k^\top\Sigmab^{-1/2}]\right]
   -\Sigmab^{-1/2}\E_{\Ec}[\P_\perp\Sigmab]\Sigmab^{-1/2}\right\|
   \\
   \leq &\left\|\E_{\Ec}\bigg[\left(\frac{\E\xi}{\xi}-1\right)\cdot
        \Sigmab^{-1/2}\Pperp{k} \x_k\x_k^\top \Sigmab^{-1/2}\bigg]\right\| \\
        &\quad\quad + \left\|\Sigmab^{-1/2}\E_{\Ec}\big[ \Pperp{k} \x_k\x_k^\top-\Pperp{k}\Sigmab\big]\Sigmab^{-1/2}\right\| \\
        &\quad\quad\quad\,\,\,\, +\left\|\Sigmab^{-1/2}\E_{\Ec}[\P_{\perp k}\Sigmab - \P_\perp\Sigmab]\Sigmab^{-1/2}\right\|.
   \end{align*}
\end{proof}

Another technical corollary from the rank-one update formula is as follows
\begin{lemma}\label{lem:subgaus-sp-ext}
Under the notations and conditions of Theorem~\ref{t:sub-gaussian}, in particular, assuming that $r \ge C_1k$ for some convenient constant $C_1 > 0$, and assuming that $\|\Sigmab\| = 1$, we have
\begin{equation*}
\Pperp{k}\Sigmab\Pperp{k}\preceq 2(\Sigmab+\P)
\end{equation*}
and
\begin{equation*}
\Sigmab^{-1/2}\preceq \sqrt{\gamma}\,\overbar\P^{-1/2}\preceq\overbar\P^{-1/2}.
\end{equation*}
\end{lemma}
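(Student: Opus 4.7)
My plan is to prove each of the two positive-semidefinite inequalities directly using elementary matrix algebra. Neither part is genuinely deep: the first reduces to a careful expansion of $\Pperp{k}\Sigmab\Pperp{k}$, and the second reduces to a scalar comparison via simultaneous diagonalization.

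For the first inequality, I would expand $\Pperp{k} = \I - \P_{-k}$ to obtain
\begin{equation*}
\Pperp{k}\Sigmab\Pperp{k} \;=\; \Sigmab \;-\; (\P_{-k}\Sigmab + \Sigmab\P_{-k}) \;+\; \P_{-k}\Sigmab\P_{-k},
\end{equation*}
and then bound the three contributions separately. The cross term is handled using the symmetric inequality $(\P_{-k} + \Sigmab)^2 \succeq 0$, which rearranges to $-(\P_{-k}\Sigmab + \Sigmab\P_{-k}) \preceq \P_{-k}^2 + \Sigmab^2$. Under the normalization $\|\Sigmab\| = 1$ we have $\Sigmab^2 \preceq \Sigmab$, and since the row-space of $\S_{-k}\A$ is contained in the row-space of $\S\A$ we have $\P_{-k} \preceq \P$ (so $\P_{-k}^2 = \P_{-k} \preceq \P$). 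Combining gives $-(\P_{-k}\Sigmab + \Sigmab\P_{-k}) \preceq \Sigmab + \P$. The last term is handled similarly: $\P_{-k}\Sigmab\P_{-k} \preceq \|\Sigmab\|\P_{-k} \preceq \P$. Summing the three bounds yields $\Pperp{k}\Sigmab\Pperp{k} \preceq 2(\Sigmab + \P)$, as desired.

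For the second inequality, the key observation is that both $\Sigmab^{-1/2}$ and $\overbar\P^{-1/2}$ are functions of $\Sigmab$, hence commute and are simultaneously diagonalizable, so PSD ordering between them reduces to a scalar comparison on each eigenvalue and can be squared freely. From the definition $\overbar\P = \gamma\Sigmab(\gamma\Sigmab+\I)^{-1}$ one directly computes $\overbar\P^{-1} = \I + \gamma^{-1}\Sigmab^{-1}$. After squaring, the first half $\Sigmab^{-1/2} \preceq \sqrt{\gamma}\,\overbar\P^{-1/2}$ becomes $\Sigmab^{-1} \preceq \gamma\I + \Sigmab^{-1}$, which is trivial; the second half $\sqrt{\gamma}\,\overbar\P^{-1/2} \preceq \overbar\P^{-1/2}$ becomes $\gamma \leq 1$. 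The latter is where the stable-rank hypothesis enters: since $\Pperp{k}$ is an orthogonal projection of rank at least $n - k + 1$, it can kill at most the top $k-1$ eigenvalues of $\Sigmab$, giving $\tr(\Sigmab\Pperp{k}) \geq r - (k-1) \geq r - k$ almost surely, so $\gamma = k/\E\,\tr(\Sigmab\Pperp{k}) \leq k/(r-k) \leq 1$ whenever $r \geq 2k$.

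The only things requiring care are (i) using $(A+B)^2 \succeq 0$ rather than $(A-B)^2 \succeq 0$ in part one so that the cross-term bound has the correct sign, and (ii) noting the simultaneous-diagonalization structure in part two, which is what makes squaring the PSD inequality between $\Sigmab^{-1/2}$ and $\overbar\P^{-1/2}$ legitimate (operator monotonicity of the square root is only one-directional in general).
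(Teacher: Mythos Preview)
Your proof is correct and follows essentially the same approach as the paper: the same expansion of $\Pperp{k}\Sigmab\Pperp{k}$ with the $(\P_{-k}+\Sigmab)^2\succeq 0$ trick for the cross term and $\|\Sigmab\|=1$ for the rest, and the same reduction of the second inequality to $\overbar\P\preceq\gamma\Sigmab$ together with $\gamma\leq k/(r-k)\leq 1$. Your presentation is in fact a bit more explicit about why squaring is legitimate in the second part.
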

\begin{proof}
Using that  $\|\Sigmab\|=1$, the spectrum of $\Pperp{k}\Sigmab\Pperp{k}$ is bounded as follows:
    \begin{align*}
    \Pperp{k}\Sigmab\Pperp{k} 
    &= (\I-\P_{-k})\Sigmab(\I-\P_{-k})
    \nonumber\\ &
    =\Sigmab - \P_{-k}\Sigmab - \Sigmab\P_{-k} + \P_{-k}\Sigmab\P_{-k}
    \nonumber\\ &
    \preceq \Sigmab + \Sigmab^2+\P_{-k}^2 + \P_{-k}^2 \preceq 2(\Sigmab+\P).
\end{align*}
For the second part, since $\overbar\P = \gamma \Sigmab (\gamma \Sigmab + \I)^{-1} \preceq \gamma \Sigmab $ and $\gamma = k/\E\|\A\Pperp{k}\|_F^2 \le k/(r-k) \le 1$ by the condition on $r$,
\begin{equation*}
\Sigmab^{-1/2}\preceq \sqrt{\gamma}\,\overbar\P^{-1/2}\preceq\overbar\P^{-1/2}.
\end{equation*}
\end{proof}
The last auxiliary lemma quantifies the variance of quadratic forms for vectors under Euclidean concentration:
\begin{lemma}\label{lem:concentration-sub-gaussian}
Let $\x$ be an $n$-dimensional isotropic random vector that satisfies Euclidean concentration with constant $L$. Then, for any $n\times n$ matrix $\B$, we have:
\begin{align*}
    \Var\big[\x^\top\B\x\big] \leq CL^2\|\B\|(L^2\|\B\| + \|\B\|_*).
\end{align*}
\end{lemma}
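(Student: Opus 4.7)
The plan is to reduce the problem to the positive semidefinite case and then apply Euclidean concentration directly to the resulting norm.

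\textbf{Step 1: Symmetrization and PSD decomposition.} Since $\x^\top \B \x = \x^\top \frac{\B+\B^\top}{2}\x$, and both $\|\cdot\|$ and $\|\cdot\|_*$ are at most doubled upon symmetrization, we may assume $\B$ is symmetric. Writing the eigendecomposition $\B = \B_+ - \B_-$ into its positive and negative parts (both PSD), we get $\x^\top \B \x = \x^\top \B_+ \x - \x^\top \B_- \x$, so by the standard inequality $\Var[U-V]\leq 2(\Var U + \Var V)$ we reduce to bounding $\Var[\x^\top \M \x]$ for PSD $\M$ with $\|\M\|\leq\|\B\|$ and $\|\M\|_*\leq\|\B\|_*$.

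\textbf{Step 2: Rewrite as a squared norm.} For PSD $\M$, let $\A=\M^{1/2}$, so $\x^\top \M \x = \|\A\x\|^2$, with $\|\A\|^2=\|\M\|$ and $\|\A\|_F^2 = \tr(\M) = \|\M\|_*$. Define $Y := \|\A\x\|-\|\A\|_F$. By Euclidean concentration, $Y$ is $L\|\A\|$-sub-gaussian, which via the standard sub-gaussian moment bounds gives $\E[Y^2]\leq CL^2\|\A\|^2$ and $\E[Y^4]\leq C'L^4\|\A\|^4$.

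\textbf{Step 3: Expand and bound the variance.} Expanding, $\|\A\x\|^2 = Y^2 + 2\|\A\|_F\, Y + \|\A\|_F^2$. Using $\Var[U+V]\leq 2(\Var U + \Var V)$,
\begin{equation*}
\Var[\x^\top\M\x] \;\leq\; 2\Var[Y^2] + 8\|\A\|_F^2\Var[Y] \;\leq\; 2\E[Y^4] + 8\|\A\|_F^2\,\E[Y^2],
\end{equation*}
which by the moment bounds of Step 2 is at most $CL^2\|\M\|(L^2\|\M\|+\|\M\|_*)$.

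\textbf{Step 4: Combine.} Plugging this back into the PSD reduction from Step 1 (applied to $\B_+$ and $\B_-$, both with spectral norm at most $\|\B\|$ and trace norm at most $\|\B\|_*$), we obtain the advertised bound $\Var[\x^\top\B\x]\leq CL^2\|\B\|(L^2\|\B\|+\|\B\|_*)$ after absorbing absolute factors into $C$.

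The argument is essentially routine once the right decomposition is chosen; the only subtlety is Step 1, where one must be careful that the spectral and trace norms of the PSD parts of a symmetric matrix are each controlled by those of the original (the spectral norm bound uses that $\|\B\|=\max(\lambda_{\max}^+,\lambda_{\min}^-)$, while the trace norm of the parts sum to $\|\B\|_*$). I do not foresee a serious obstacle beyond tracking absolute constants.
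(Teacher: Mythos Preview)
Your proposal is correct and follows essentially the same approach as the paper: reduce to the symmetric case, split into PSD parts $\B_+,\B_-$, and for each PSD piece write the quadratic form as $\|\A\x\|^2$ and control its variance via the second and fourth moments of $Y=\|\A\x\|-\|\A\|_F$ coming from Euclidean concentration. The only cosmetic difference is that the paper factors $(\|\A\x\|^2-\|\A\|_F^2)^2=Y^2(\|\A\x\|+\|\A\|_F)^2$ and bounds the second factor, whereas you expand $\|\A\x\|^2=Y^2+2\|\A\|_F Y+\|\A\|_F^2$ and apply $\Var[U+V]\le 2\Var U+2\Var V$; both routes land on the identical bound $2\E[Y^4]+8\|\A\|_F^2\E[Y^2]$. (Minor aside: in Step~1 the norms of $(\B+\B^\top)/2$ are in fact at most those of $\B$, not ``at most doubled,'' so no constant is lost there.)
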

\begin{proof}
First, suppose that $\B=\A^\top\A$ for some matrix $\A$.
Note that from Euclidean concentration we have that $\E[(\|\A\x\|-\|\A\|_F)^p]\leq CL^p\|\A\|^p$ for $p=2,4$. Thus, we have 
\begin{align*}
    \Var\big[\x^\top\B\x\big]
    &=\E\big[(\|\A\x\|^2-\|\A\|_F^2)^2\big]
    \\
    &=\E\big[(\|\A\x\|-\|\A\|_F)^2(\|\A\x\|+\|\A\|_F)^2\big]
    \\
    &\leq 2\,\E\big[(\|\A\x\|-\|\A\|_F)^4\big] + 8\|\A\|_F^2\E\big[(\|\A\x\|-\|\A\|_F)^2\big]
    \\
    &\leq 2C L^4\|\A\|^4 + 8\|\A\|_F^2CL^2\|\A\|^2
    \\
    &=2CL^2\|\B\|(L^2\|\B\|+4\|\B\|_*),
\end{align*}
Adjusting the constants we obtain the claim for a symmetric positive semidefinite $\B$. Next, suppose that $\B$ is only symmetric. Then, we can decompose it as a difference of two positive semidefinite matrices: $\B=\B_+-\B_-$, and moreover we have:
\begin{align*}
    \Var[\x^\top\B\x] 
    &= \E\Big[\big(\x^\top\B_+\x-\tr(\B_+) - (\x^\top\B_-\x-\tr(\B_-)\big)^2\Big]
    \\
    &\leq 2\Var[\x^\top\B_+\x] + 2\Var[\x^\top\B_-\x]
    \\
    &\leq 2CL^2\|\B_+\|(L^2\|\B_+\|+\|\B_+\|_*) + 2CL^2\|\B_-\|(L^2\|\B_-\|+\|\B_-\|_*)
    \\
    &\leq 4CL^2\|\B\|(L^2\|\B\|+\|\B\|_*).
\end{align*}
Finally, for a general matrix $\B$, it suffices to observe that $\x^\top\B\x = \x^\top\bar\B\x$ for a symmetric matrix $\bar\B=(\B+\B^\top)/2$, which completes the proof.
\end{proof}

\section{Additional details for the experiments}\label{app:add-experiments}
Here, we provide additional details regarding the data generation process.

The artificial matrices have dimensions $5000 \times 150$, and are generated to have various spectral profiles. It is done in the following way. Initial SVD decomposition $\A = \U \Sigmab \V$ is constructed for a standard Gaussian matrix $\A$ with the rows normalized to have unit norm. Such matrix has almost linear spectral decay and well-conditioned smallest singular value (diagonal entries of $\Sigmab$ roughly span the segment $[6.8, 4.8]$). Based on that, we substitute $\Sigmab$ with artificial diagonal matrices having the following spectral decays. Models 'lin.01', 'lin.025' and 'lin.035' are the matrices $\U \bar\Sigmab \V$ with the entries of $\bar\Sigmab$ having linear decays $\sigma_i = 6.8-l\cdot i$ with $l = 0.01, 0.025$, and $0.035$ (which results in $\sigma_{\min}$ being $5.3$, $3.1$ and $1.6$, respectively). 
Two polynomial decay models 'poly1' and 'poly1.5' are imposed by the entries of $\bar\Sigmab$ defined by $\sigma_i = 6.8\,i^{-l}$, for $l = 1$ and $l = 1.5$ respectively. Finally, in 'step20' model, $\bar\Sigmab$ has its $20$ top singular values the same as in 'lin.01' model and the rest are as in 'poly1' model, and 'step37' model does the same transition from linear to polynomial decay at the $37$-th singular value. Note that all models are constructed to share the same largest singular value, so that the smallest singular value is the correct indicator of how well-conditioned the system~is.

We also consider three other data matrices: a $5000 \times 150$ random matrix with entries randomly generated i.i.d. N(0,1) and then normalized to have unit length rows ('gaus'); a $1280 \times 256$ sub-matrix of the USPS dataset \cite{chang2011libsvm} ('usps'); a $1500 \times 300$ sub-matrix of the w8a-X dataset \cite{chang2011libsvm} ('w8a'). We note that 'w8a' is not a full-rank dataset, so the convergence error is computed with respect to the least squares solution, which randomized Kaczmarz is known to converge to on rank-deficient systems \cite{zouzias2013randomized}. 

 \paragraph{Additional experiments on real data} 
 
\begin{figure}
\centering
\includegraphics[width=\linewidth]{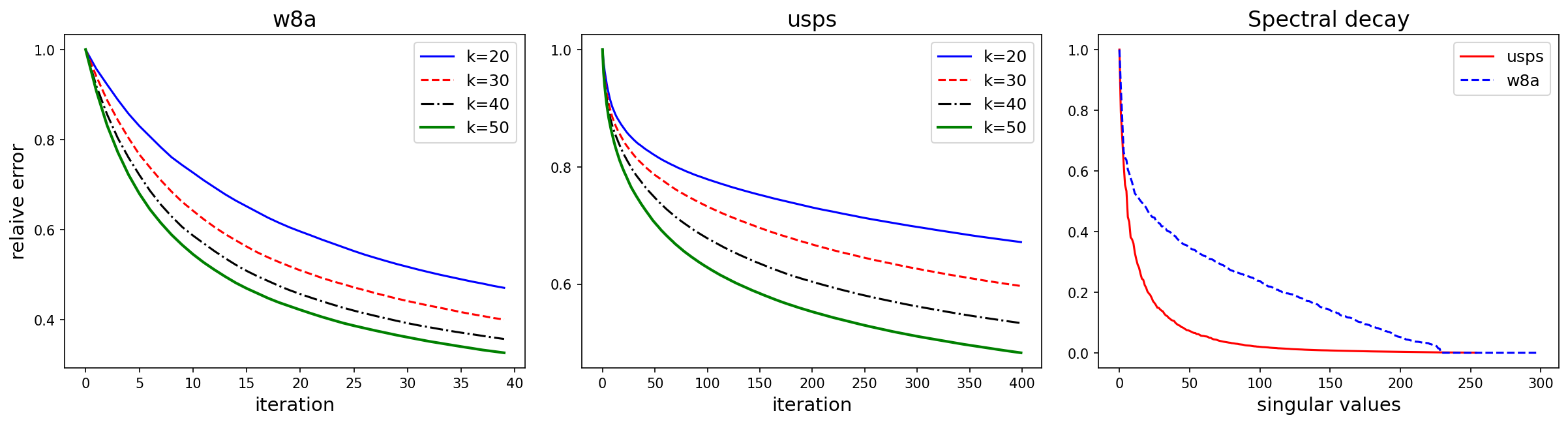}
\vspace{-2mm}
\caption{Per iteration convergence for two models based on real-world data matrices $\A$ (left and middle) and their spectral profiles (right). Sketch size is equal to $k$. The real-world dataset with slower (more ``linear") spectral decay ('w8a') shows less convergence improvement with bigger sketch sizes, compared to the dataset with a faster spectral decay ('usps').}
\label{fig-real-world-2}
\end{figure}
In Figure~\ref{fig-real-world-2}, we show how sketch size influences the convergence behavior of sketched Kaczmarz on real-world datasets 'w8a' and 'usps', by plotting relative error after a certain number of iterations. This figure complements the results on artificial matrices, shown in Figure~\ref{fig-real-world-1}. In both figures, the relative error after $t$ steps is computed as
 $\|\x_t - \x_*\|/\|\x_*\|$. Our initial guess $\x_0$ is the all zeros vector. 

 Similar effect is observed on real-world datasets (Figure~\ref{fig-real-world-2}): the spectral decay of 'w8a' is flatter than the spectrum of 'usps', and increasing the sketch size $k$ leads to less significant improvement of convergence speed.

\end{document}